\definecolor{ed}{RGB}{225,0,0}
\definecolor{sifan}{rgb}{0.0, 0.0, 1.0}
\newcommand{\numberthis}{\addtocounter{equation}{1}\tag{\theequation}}
\title{Asymptotics for Sketching in Least Squares Regression}
\date{\today}
\author{Edgar Dobriban\footnote{Wharton Statistics Department, University of Pennsylvania. E-mail: \texttt{dobriban@wharton.upenn.edu}.} \, and Sifan Liu\footnote{Department of Statistics,
Stanford University. E-mail: \texttt{sfliu@stanford.edu}. The bulk of this work was performed while SL was a student at Tsinghua University.} }
\begin{document}

\maketitle

\begin{abstract}
  We consider a least squares regression problem where the data has been generated from a linear model, and we are interested to learn the unknown regression parameters. We consider "sketch-and-solve"  methods that randomly project the data first, and do regression after. Previous works have analyzed the statistical and computational performance of such methods. However, the existing analysis is not fine-grained enough to show the fundamental differences between various methods, such as the Subsampled Randomized Hadamard Transform (SRHT) and Gaussian projections. In this paper, we make progress on this problem, working in an asymptotic framework where the number of datapoints and dimension of features goes to infinity. We find the limits of the accuracy loss (for estimation and test error) incurred by popular sketching methods. We show separation between different methods, so that SRHT is better than Gaussian projections. Our theoretical results are verified on both real and synthetic data. The analysis of SRHT relies on novel methods from random matrix theory that may be of independent interest.
\end{abstract}

\section{Introduction}
To enable learning from large datasets, randomized algorithms such as sketching or random projections are an effective approach of wide applicability \citep{mahoney2011randomized, woodruff2014sketching,drineas2016randnla}. In this work, we study the statistical performance of sketching algorithms in linear regression. Various versions of this fundamental problem have been studied before \citep[see e.g.,][and the references therein]{drineas2006sampling,drineas2011faster,dhillon2013new,ma2015statistical,raskutti2014statistical,thanei2017random}. Specifically, in a generative model where the data are sampled from a linear regression model, \citet{raskutti2014statistical} have recently compared the statistical performance of various sketching algorithms, such as Gaussian projections and subsampled randomized Hadamard transforms (SRHT) (introduced earlier in \citet{sarlos2006improved,ailon2006approximate}).

However, the known results are not precise enough to enable us to distinguish between the various sketching methods. For instance, the statistical performance of Gaussian projections and SRHT is predicted to be the same \citep{raskutti2014statistical}, whereas the SRHT has been observed to work better in practice \citep{mahoney2011randomized, woodruff2014sketching,drineas2016randnla}. To address this issue, in this paper we introduce a new approach to studying sketching in least squares linear regression. As a key difference from prior work we adopt a "large-data" asymptotic limit, where the relevant dimensions and sample sizes tend to infinity, and can have arbitrary aspect ratios. By leveraging very recent results from asymptotic random matrix theory and free probability theory, we get more accurate results for the performance of sketching.  

We study many of the most popular and important sketching methods in a unified framework, including random projection methods (Gaussian and iid projections, uniform orthogonal---Haar---projections, subsampled randomized Hadamard transforms) as well as random sampling methods (including uniform, randomized leverage-based, and greedy leverage sampling). 
We find clean formulas for the accuracy loss of these methods, compared to standard least squares. As an improvement over prior work, our formulas are accurate down to the constant. We verify these results in extensive simulations and on two empirical datasets. 


\subsection{Problem setup}
\label{probset}

\begin{table}[]
\renewcommand{\arraystretch}{1}
\centering
\caption{Summary of main results. We have a linear model $Y = X\beta+\ep$ of size $n \times p$ and do regression after sketching on data $(SY,SX)$. We show the increase in three loss functions due to sketching: $VE$ (variance efficiency--increase in parameter estimation error), $PE$ (prediction efficiency), and $OE$ (out-of-sample prediction efficiency). The assumptions for $X$ depend on the sketching method.}
\label{summaryresults}
\begin{tabular}{>{\centering\arraybackslash}m{2cm}>{\centering\arraybackslash}m{2cm}>{\centering\arraybackslash}m{2cm}>{\centering\arraybackslash}m{2cm}>{\centering\arraybackslash}m{4cm}}
\toprule[1pt]
Assumption on $X$ & Arbitrary & Arbitrary & Ortho-invariant & Elliptical: $WZ\Sigma^{1/2}$ \\
\midrule[.8pt] 
Assumption on $S$ & iid entries & Haar/Hadamard & Uniform sampling & Leverage sampling \\
\midrule[.8pt]
$VE$ & \multirow{2}{*}[-0.7em]{\(\displaystyle1+\frac{n-p}{r-p}\)} & \multirow{2}{*}[-0.7em]{\(\displaystyle\frac{n-p}{r-p}\)} & \multirow{2}{*}[-0.7em]{\(\displaystyle\frac{n-p}{r-p}\)} & $\frac{\eta_{sw^2}^{-1}(1-p/n)}{\eta_{w^2}^{-1}(1-p/n)}$ \\
\cmidrule[.8pt]{1-1}\cmidrule[.8pt]{5-5}
$PE$ & & & & $1+\E[w^2(1-s)]\frac{\eta_{sw^2}^{-1}(1-p/n)}{p/n}$ \\
\midrule[.8pt]
$OE$ & \(\displaystyle\frac{nr-p^2}{n(r-p)}\) & \(\displaystyle\frac{r(n-p)}{n(r-p)}\) & \(\displaystyle\frac{r(n-p)}{n(r-p)}\) & $\frac{1+\E{w^2}\eta_{sw^2}^{-1}(1-\gamma)}{1+\E{w^2}\eta_{w^2}^{-1}(1-\gamma)}$ \\
\bottomrule[1pt]
\end{tabular}
\end{table}

Suppose we observe $n$ datapoints $(x_i,y_i)$, $i=1,\ldots,n$, where $x_i$ are the $p$-dimensional features (or predictors, covariates) of the $i$-th datapoint, and $y_i$ are the continuous outcomes (or responses). We assume the usual linear model $y_i=x_i^\top\beta+\ep_i$, where $\beta$ is an unknown $p$-dimensional parameter. Also $\ep_i$ is the zero mean noise, with entries uncorrelated and of equal variance $\sigma^2$ across samples. In matrix form, we have
$Y = X\beta+\ep,$
where $X$ is the $n \times p$ data matrix with $i$-th row $x_i^\top$, and $Y$ is the $n\times1$ outcome vector with $i$-th entry $y_i$.
Then the usual ordinary least squares (OLS) estimator is
$$
\hbeta = (X^\top X)^{-1} X^\top Y,
$$
if $rank(X)=p$. This estimator is a gold standard when $n>p$, extremely popular in practice, and with many optimality properties. 
However, when $n,p$ are large, say on the order of millions or billions, the natural $O(np^2)$ time-complexity algorithms for computing it can be prohibitively expensive.  Sketching reduces the size of the problem by multiplying $(X, Y)$ by the $r\times n$ matrix $S$ to obtain the \emph{sketched data} $(\tilde X, \tilde Y)=(SX,SY)$. The dimensions are now $r\times p$ and $r\times 1$. Then instead of doing regression of $Y$ on $X$, we do regression of $\tilde{Y}$ on $\tilde{X}$. 
The solution is
\begin{align*}
\hbeta_s = (\tilde X^\top \tilde X)^{-1} \tilde X^\top \tilde Y,
\end{align*}
if $rank(SX)=p$. In the remainder, we assume that both $X$ and $SX$ have full column rank, which happens with probability one in the generic case if $r>p$. The computational cost decreases from $np^2$ to $rp^2$, which is significant if $r\ll n$. In parallel, the statistical error increases. There is a tradeoff between the computational cost and statistical error. The natural question is then, how much does the error increase?  

\paragraph{Error Criteria}
To compare the statistical efficiency of the estimators $\hbeta$ and $\hbeta_s$, we evaluate the relative value of their mean squared error. If we use the full OLS estimator, we incur a mean squared error of $\smash{\E\|\hbeta-\beta\|^2}$. If we use the sketched OLS estimator, we incur a mean squared error of $\smash{\E\|\hbeta_s-\beta\|^2}$ instead. To see how much efficiency we lose, it is natural and customary in classical statistics to consider the relative efficiency, which is their ratio \citep[e.g.][]{van1998asymptotic}. We call this the \emph{variance efficiency} ($VE$), because the MSE for estimation can be viewed as the sum of variances of the OLS estimator. Hence, we define 
\begin{align*}
VE(\hbeta_s,\hbeta)&
=\frac{\E{\|\hbeta_s-\beta\|^2}}{\E{\|\hbeta-\beta\|^2}}.
\end{align*}
This quantity is greater than or equal to unity, so $VE\ge 1$, and \emph{smaller is better}. An accurate sketching method would achieve an efficiency close to unity, $VE\approx 1$. Our goal will be to find VE. For completeness, we also consider the \emph{relative prediction efficiency (PE)}, \emph{residual efficiency (RE)}, and \emph{out-of-sample efficiency (OE)}
\begin{gather*}
PE
=\frac{\E{\|X\hbeta_s-X\beta\|^2}}{\E{\|X\hbeta-X\beta\|^2}},\quad
RE=\frac{\E{\|Y-X\hbeta_s\|^2}}{\E{\|Y-X\hbeta\|^2}},
\quad 
OE
=\frac{\E{(x_t^\top \hbeta_s-y_t)^2}}{\E{(x_t^\top \hbeta-y_t)^2}},
\end{gather*}
where $(x_t,y_t)$ is a test data point generated from the same model $y_t=x_t^\top \beta+\ep_t$, and $x_t,\ep_t$ are independent of $X,\ep$, and only $x_t$ is observable. The PE quantifies the loss of accuracy in predicting the regression function $\E[Y|X]=X\beta$, the RE quantifies the increase in residuals, while the OE quantifies the increase in test error.

\subsection{Our contributions}

We consider a "large data" asymptotic limit, where both the dimension $p$ and the sample size $n$ tend to infinity, and their aspect ratio converges to a constant. The size $r$ of the sketched data is also proportional to the sample size. Specifically $n,p$, and $r$ tend to infinity such that the original aspect ratio converges, $p/n\rightarrow\gamma\in(0, 1)$, while the data reduction factor also converges, $r/n\rightarrow\xi\in(\gamma, 1)$. Under these asymptotics, we find the limits of the relative efficiencies under various conditions on $X$ and $S$. 
This asymptotic setting is different from the usual one under which sketching is studied, where $n \gg r$ \citep[e.g.,][]{mahoney2011randomized, woodruff2014sketching,drineas2016randnla}. However our results are accurate even in that regime. It may be possible to get convergence rates for the projections with iid entries using known results on convergence rates of Stieltjes transforms. 

In practice, we do not think that $n$ or $p$ grow. Instead, for any given dataset with given $n$ and $p$, we use our results with $\gamma=p/n$ as an approximation. If $n,p$ are both relatively large (say larger than 20), then our results are already quite accurate.

It turns out that the different methods have different performance, and they are applicable to different data matrices. Our main results are summarized in Table \ref{summaryresults}. For instance, when $X$ is arbitrary and $S$ is a matrix with iid entries, the variance efficiency is $1+(n-p)/(r-p)$, so estimation error increases by that factor due to sketching. The results are stated formally in theorems in the remainder of the paper.

\paragraph{The formulas are accurate and simple} We observe that our results are accurate, both in simulations and in two empirical data analysis examples, see Section \ref{sec:simulation}. In particular, they go beyond earlier work \citep{raskutti2014statistical} because they are accurate not just up to the rate, but also down to the precise constants, even in relatively small samples (see Section \ref{section:compare mahoney} in the supplemental for a comparison). Moreover, they have simple expressions and do not depend on any un-estimable parameters of the data. 

\paragraph{Separation between sketching methods} Our results enable us to compare the different sketching methods to a greater level of detail than previously known. For instance, in estimation error ($VE$), we have $VE_{\textnormal{iid}}=VE_{\textnormal{Haar}}+1=VE_{\textnormal{Hadamard}}+1$. This shows that estimation error for uniform orthogonal (Haar) random projections and the subsampled randomized Hadamard transform (SRHT) \citep{ailon2006approximate} is less than for iid random projections. This shows a separation between orthogonal and iid random projections.




\paragraph{Tradeoff between computation and statistical accuracy} Each sketching method becomes more accurate as the projection dimension increases. However, this comes at an increased computational cost. We give a summary of the algorithmic complexity and statistical accuracy (variance efficiency) of each method in Section \ref{sec: table tradeoof supp}, as well as a numerical comparison in Section \ref{sec: computation time supp} in the supplement.

As an illustrating example, consider the dataset with $n=10^7$ and $p=10^5$ and we want to use SRHT before doing least squares. Our results show that if we project down to $r<n$ samples, then our test error increases by a factor of $r(n-p)/[n(r-p)]$. Suppose now that we are willing to tolerate an increase of 1.1x in our test error. Setting $r(n-p)/[n(r-p)]=1.1$ gives $r=10^6$. So we can reduce the data size 10x, and only incur an increase of 1.1x in test error! This is a striking illustration of the power of sketching.

\paragraph{Technical contributions}
As a specific technical contribution, our results rely on asymptotic random matrix theory \citep[e.g.,][]{bai2009spectral,couillet2011random,yao2015large}. However, we emphasize that the "standard" results such as the Marchenko-Pastur law are \emph{not} enough. For instance, to study the subsampled randomized Hadamard transform (SRHT), we discovered that we can use the results of \citep{anderson2014asymptotically} on \emph{asymptotically liberating sequences}, see also \citep{tulino2010capacity} for prior work. To our knowledge, this is the first time that these results are used in any statistical learning application. Given the importance of the SRHT, and the notoriously difficult nature of analyzing it, we view this as a technical innovation of broader interest. 

Since there are already many different sketching methods proposed before, we do not attempt to introduce new ones here. Our goal is instead to develop a clear theory. This can lead to an increased understanding of the performance of the various methods, helping practitioners choose between them. Our theoretical framework may also help in analyzing and understanding new methods.



\subsection{Related work}
In this section we review some recent related work. Due to space limitations, we can only mention a small subset of them. For overviews of sketching and random projection methods from a numerical linear algebra perspective, see \citep{halko2011finding,mahoney2011randomized,woodruff2014sketching,drineas2017lectures}. For a theoretical computer science perspective, see \citep{vempala2005random}. 


\citep{drineas2006sampling} show that leverage score sampling leads to better results than uniform sampling. \citep{drineas2012fast}, show furthermore that leverage scores can be approximated fast using the Hadamard transform. \citep{drineas2011faster} propose the fast Hadamard transform for sketching in regression. They prove strong relative error bounds on the realized in-sample prediction error for arbitrary input data. Our results concern a different setting that assumes a generative statistical model.


One of the most related works is \citep{raskutti2014statistical}. They study sketching algorithms from both statistical and algorithmic perspectives. However, they focus on a different setting, where $n\gg r$, and prove bounds on $RE$ and $PE$. For instance, they discover that $RE$ can be bounded even when $r$ is not too large, proving bounds such as $RE \le 1+ 44p/r$ for subsampling and subgaussian projections. In contrast, we show more precise results such as $|RE - r/(r-p)|=o(1)$, (without the constant 44). This holds without additional assumption for iid projections, and under the slightly stronger condition of ortho-invariance for subsampling. We show that these conditions are reasonable, because our results are accurate both in simulations and in empirical data analysis examples. 


Other related works include sketching with convex constraints \citep{pilanci2015randomized}, column-wise sketching \citep{maillard2009compressed,kaban2014new,thanei2017random}, tensor sketching \citep{pham2013fast,diao2017sketching,malik2018low}, subspace embedding for nonlinear kernel mapping \citep{avron2014subspace}, partial sketching \citep{dhillon2013new,ahfock2017statistical}, frequent direction in streaming model \citep{liberty2013simple,huang2018near}, count-min sketch \citep{cormode2005improved}, randomized dimension reduction in stochastic geometry \citep{oymak2015universality}. Sketching also has numerous applications to problems in machine learning and data science, such as clustering \citep{cannings2017random}, hypothesis testing \citep{lopes2011more}, bandits \citep{kuzborskij2018efficient} etc.

\section{Theoretical results}
We present our theoretical results in this section. All proofs are in the supplementary material.

\subsection{Gaussian projection}
\label{section:gaussianSFinite}

For Gaussian random projection, the sketching matrix $S$ is generated from the  Gaussian distribution. An advantage of Gaussian projections is that generating and multiplying Gaussian matrices is \emph{embarrassingly parallel}, making it appropriate for certain distributed and cloud-computing architectures. For the performance of Gaussian sketching, we have the following result. The first part gives exact formulas for the variance, prediction, and out-of-sample efficiencies VE, PE, and OE. The second part simplifies the OE approximation for a special class of design matrices $X$. 

\begin{theorem}[Gaussian projection] Suppose $S$ is an $r\times n$ Gaussian random matrix with iid standard normal entries. Let $X$ be an arbitrary $n\times p$ matrix with full column rank $p$, and suppose that $r-p>1$. Then the efficiencies have the following form
\begin{align*}
&VE(\hbeta_s,\hbeta)
=PE(\hbeta_s,\hbeta)
=1+\frac{n-p}{r-p-1},
\\
&OE(\hbeta_s,\hbeta)=\frac{1+\left[1+\frac{n-p}{r-p-1}\right]x_t^\top (X^\top X)^{-1} x_t}{1+x_t^\top (X^\top X)^{-1} x_t}.
\end{align*}
Second, suppose in addition that $X$ is also random, having the form $X=Z\Sigma^{1/2}$, where $Z\in\R^{n\times p}$ has iid entries of zero mean, unit variance and finite fourth moment, and $\Sigma\in\R^{p\times p}$ is a deterministic positive definite matrix. If the test datapoint is drawn independently from the same population as $X$, i.e. $x_t=\Sigma^{1/2}z_t$, then as $n,p,r$ grow to infinity proportionally, with $p/n\rightarrow\gamma\in(0, 1)$ and $r/n\rightarrow\xi\in(\gamma, 1)$, we have the simple formula for OE
\begin{align*}
\limn OE(\hbeta_s,\hbeta)=\frac{\xi-\gamma^2}{\xi-\gamma}
\approx \frac{nr-p^2}{n(r-p)}.
\end{align*}
\label{gsdx}
\end{theorem}
These results are complementary to \citet{raskutti2014statistical}, who showed that $PE\le 44(1 +n/r)$, $RE\le 1 + 44 p/r$ with fixed probability under slightly different assumptions. 
These formulas have all the properties we claimed before: they are simple, accurate, and easy to interpret. The relative efficiencies \emph{decrease} with $r/n$, the ratio of preserved samples after sketching. This is because a larger number of samples leads to a higher accuracy. Also, when $\xi = \lim r/n=1$, $VE$ and $PE$ reach a minimum of 2. Thus, taking a random Gaussian projection will \emph{degrade the performance of OLS even if we do not reduce the sample size}. This is because iid projections distort the geometry of Euclidean space due to their non-orthogonality. We will see how to overcome this using orthogonal random projections.

The proofs have three stages. The first stage, common to all sketching methods, expresses the VE and other desired quantities in terms of traces of appropriate matices. The second stage involves finding the implicit limit of those traces using random matrix theory, in terms of certain fixed-point equations from the Marchenko-Pastur law. The final stage involves finding the explicit limit. In the Gaussian case, the second and third stages simplify into explicit calculations with the Wishart distribution. 
\subsection{iid projections}
\label{section:iid S}

For iid projections, the entries of $S$ are generated independently from the same distribution (not necessarily Gaussian). This will include \emph{sparse projections} with iid $0,\pm1$ entries, which can speed up computation \citep{achlioptas2001database}. We show that in the "large-data" limit the performance of sketching is the same as for Gaussian projections. This is an instance of \emph{universality}.


\begin{theorem}[Universality for iid projection] Suppose that $S$ has iid entries of zero mean and finite fourth moment. Suppose also that $X$ is a deterministic matrix, whose singular values are uniformly bounded away from zero and infinity. Then as $n$ goes to infinity, while $p/n\rightarrow\gamma\in(0,1)$, $r/n\rightarrow\xi\in(\gamma,1)$, the efficiencies have the limits
\begin{align*}
\limn VE(\hbeta_s, \hbeta)
= \limn PE(\hbeta_s, \hbeta)
&=1+\frac{1-\gamma}{\xi-\gamma}.
\end{align*}
Suppose in addition that $X$ is also random, under the same model as in Theorem \ref{gsdx}. Then the formula for OE given there still holds in this more general case.
\label{theorem:iidS,deterministic X}
\end{theorem}
The proof is based on a Lindeberg exchange argument.

\subsection{Orthogonal (Haar) random projection}
\label{subsection:HaarS}
We saw that a random projection with iid entries will degrade the performance of OLS \emph{even if we do not reduce the sample size}. Matrices with iid entries are not ideal for sketching, because they distort the geometry of Euclidean space due to their non-orthogonality. Is it possible to overcome this using orthogonal random projections?
Here $S$ is a Haar random matrix uniformly distributed over the space of all $r\times n$ partial orthogonal matrices. 

We need the following definition. Recall that for an $n\times p$ matrix $M$ with $n\ge p$, such that the eigenvalues of $n^{-1} M^\top M$ are $\lambda_j$, the \emph{empirical spectral distribution (esd.)} of $M$ is the mixture
$
\frac1p\sum_{j=1}^p\delta_{\lambda_j},
$
where $\delta_\lambda$ denotes a point mass distribution at $\lambda$.

\begin{theorem}[Haar projection] Suppose that $S$ is an $r\times n$ Haar-distributed random matrix. Suppose also that $X$ is a deterministic matrix s.t. the esd. of $X^\top X$ converges weakly to some fixed probability distribution with compact support bounded away from the origin. Then as $n$ tends to infinity, while $p/n\rightarrow\gamma\in(0,1)$, $r/n\rightarrow\xi\in(\gamma,1)$, the efficiencies have the limits 
\begin{align*}
\limn VE(\hbeta_s,\hbeta)
=\limn PE(\hbeta_s,\hbeta)
&=\frac{1-\gamma}{\xi-\gamma}.
\end{align*}
Suppose in addition that the training and test data $X$ and $x_t$ are also random, under the same model as in Theorem \ref{gsdx}. Then $\limn OE(\hbeta_s,\hbeta)=\frac{1-\gamma}{1-\gamma/\xi}.$
\label{theorem:haar S}
\end{theorem}
The proof uses the limiting esd of a product of Haar and fixed matrices.
Orthogonal projections are \emph{uniformly better} than iid projections in terms of statistical accuracy. For variance efficiency, $VE_{\textnormal{iid}}=VE_{\textnormal{Haar}}+1$. However, there is still a tradeoff between statistical accuracy and computational cost, since the time complexity of generating a Haar matrix using the Gram-Schmidt procedure is $O(nr^2)$.

\subsection{Subsampled randomized Hadamard transform}
\label{section:hadamard}
A faster way to do orthogonal projection is the subsampled randomized Hadamard transform (SRHT) \citep{ailon2006approximate}, also known as the Fast Johnson-Lindentsrauss transform (FJLT). This is faster as it relies on the Fast Fourier Transform, and is often viewed as a standard reference point for comparing sketching algorithms. 

An $n\times n$ possibly complex-valued matrix $H$ is called a \emph{Hadamard matrix} if $H/\sqrt{n}$ is orthogonal and the absolute values of its entries are unity, $|H_{ij}|=1$ for $i,j=1,\ldots,n$. 
A prominent example, the \emph{Walsh-Hadamard matrix} is defined recursively by
\begin{align*}
H_n=\left(\begin{array}{cc}H_{n/2} & H_{n/2}\\
H_{n/2} & -H_{n/2}\end{array}
\right),
\end{align*}
with $H_1=(1)$. This requires $n$ to be a power of 2. Another construction is the discrete Fourier transform (DFT) matrix with the $(u,v)$-th entry equal to 
$H_{uv}=n^{-1/2}e^{-2\pi i(u-1)(v-1)/n}$.
Multiplying this matrix from the right by $X$ is equivalent to applying the discrete Fourier transform to each column of $X$, up to scaling. The time complexity for the matrix-matrix multiplication for both the transforms is $O(np\log n)$ due to the Fast Fourier Transform, faster than other random projections. 

Now we consider the subsampled randomized Hadamard transform. Define the $n\times n$ subsampled randomized Hadamard matrix as $S=BHDP$, 
where $B\in\R^{n\times n}$ is a diagonal \emph{sampling matrix} of iid Bernoulli random variables with success probability $r/n$, 
 $H\in\R^{n\times n}$ is a Hadamard matrix, $D\in\R^{n\times n}$ is a diagonal matrix of iid random variables equal to $\pm1$ with probability one half, and $P\in\R^{n\times n}$ is a uniformly distributed permutation matrix. In the definition of $S$, the Hadamard matrix $H$ is deterministic, while the other matrices $B,D$ and $P$ are random. At the last step, we discard the zero rows of $S$, so it becomes an $\tilde r \times n$ orthogonal matrix where $\tilde r \approx r$. We expect the SRHT to be similar to uniform orthogonal projections. The following theorem verifies our intuition.  The proof uses free probability theory  \citep{tulino2010capacity,anderson2014asymptotically}. 
\begin{theorem}[Subsampled randomized Hadamard projection] Let $S$ be an $n\times n$ subsampled randomized Hadamard matrix. Suppose also that $X$ is an $n \times p$ deterministic matrix whose e.s.d. converges weakly to some fixed probability distribution with compact support bounded away from the origin.
Then as $n$ tends to infinity, while $p/n\rightarrow\gamma\in(0,1)$, $r/n\rightarrow\xi\in(\gamma,1)$, the efficiencies have the same limits as for Haar projection in Theorem \ref{theorem:haar S}.
\label{theorem:Hadamard S}
\end{theorem}

\subsection{Uniform random sampling}
\label{section:uniform_sampling}

Fast orthogonal transforms such as the Hadamard transforms are considered as a baseline for sketching methods, because they are efficient and work well quite generally. However, if the data are very uniform, for instance if the data matrix can be assumed to be nearly rotationally invariant, then \emph{sampling methods} can work just as well, as will be shown below.

The simplest sampling method is uniform subsampling, where we take $r$ of the $n$ rows of $X$ with equal probability, with or without replacement. Here we analyze a nearly equivalent method, where we sample each row of $X$ independently with probability $r/n$, so that the expected number of sampled rows is $r$. For large $r$ and $n$, the number of sampled rows concentrates around $r$. 

Moreover, we also assume that $X$ is random, and the distribution of $X$ is \emph{rotationally invariant}, i.e. for any $n\times n$ orthogonal matrix $U$ and any $p\times p$ orthogonal matrix $V$, the distribution of $UXV^\top$ is the same as the distribution of $X$. This holds for instance if $X$ has iid Gaussian entries. Then the following theorem states the surprising fact that uniform sampling performs just like Haar projection. 

\begin{theorem}[Uniform sampling]
Let $S$ be an $n\times n$ diagonal uniform sampling matrix with iid Bernoulli$(r/n)$ entries. Let $X$ be an $n\times p$ rotationally invariant random matrix. Suppose that $n$ tends to infinity, while $p/n\rightarrow\gamma\in(0,1)$, and $r/n\rightarrow\xi\in(\gamma,1)$, and the e.s.d. of $X$ converges almost surely in distribution to a compactly supported probability measure bounded away from the origin. Then the efficiencies have the same limits as for Haar matrices in Theorem \ref{theorem:haar S}.
\label{theorem:uniform sampling}
\end{theorem}

\subsection{Leverage-based sampling}
\label{section:leverage}
Uniform sampling can work poorly when the data are highly non-uniform and some datapoints are more influential than others for the regression fit. In that case, it has been proposed to sample proportionally to the leverage scores $h_{ii}=x_i^\top (X^\top X)^{-1} x_i$. These can be thought of as the "leverage of response value $Y_i$ on the corresponding value $\hat{Y}_i$". 
One can also do greedy leverage sampling, deterministically taking the $r$ rows with largest leverage scores \citep{papailiopoulos2014provable}.

In this section, we give a unified framework to study these sampling methods. Since leverage-based sampling does not introduce enough randomness for the results to be as simple and universal as before, we need to assume some more randomness via a model for $X$. Here we consider the \emph{elliptical model}
\begin{align}
x_i=w_i\Sigma^{1/2}z_i,i=1,\ldots,n,
\label{def:elliptical model}
\end{align}
where the \emph{scale variables} $w_i$ are deterministic scalars bounded away from zero, and $\Sigma^{1/2}$ is a $p\times p$ positive definite matrix. Also, $z_i$ are iid $p\times 1$ random vectors whose entries are all iid random variables of zero mean and unit variance. This model has a long history in multivariate statistics, see \citep{mardia1979multivariate}. If a scale variable $w_i$ is much larger than the rest, then $x_i$ will have a large leverage score. This model allows us to study the effect of unequal leverage scores. Similarly to uniform sampling, we analyze the model where each row is sampled independently with some probability. 

Recall that $\eta$-transform of a distribution $F$ is defined by
$
\eta_F(z)=\int\frac{1}{1+zx}dF(x),
$
for $z\in\mathbb{C}^+$ \citep[e.g.,][]{tulino2004random,couillet2011random}. In the next result, we assume that the scalars $w_i^2$, $i=1,\ldots,n$, have a limiting distribution $F_{w^2}$ as the dimension increases. In that case, the eta-transform is the limit of the leverage scores. First we give a result for arbitrary sampling with probability $\pi_i$ depending only on $w_i$, and next specialize it to leverage sampling.

\begin{theorem}[Sampling for elliptical model]
Suppose $X$ is sampled from the elliptical model defined in \eqref{def:elliptical model}. Suppose the e.s.d. of $\Sigma$ converges in distribution to some probability measure with compact support bounded away from the origin. Let $n$ tend to infinity, while $p/n\rightarrow\gamma\in(0,1)$ and $r/n\rightarrow\xi\in(\gamma,1)$. Suppose also that the $4+\eta$-th moment of $z_i$ is uniformly bounded, for some $\eta>0$.

Consider the sketching method where we sample the $i$-th row of $X$ with probability $\pi_i$ independently, where $\pi_i$ may only depend on $w_i$, and $\pi_i,i=1,\ldots,n$ have a limiting distribution $F_\pi$. Let $s|\pi$ be a Bernoulli random variable with success probability $\pi$, then
\begin{align*}
&\limn VE(\hbeta_s,\hbeta)=\frac{\eta_{sw^2}^{-1}(1-\gamma)}{\eta_{w^2}^{-1}(1-\gamma)},
\,\,\limn OE(\hbeta_s,\hbeta)=\frac{1+\E{w^2}\eta_{sw^2}^{-1}(1-\gamma)}{1+\E{w^2}\eta_{w^2}^{-1}(1-\gamma)}
\\
&\limn PE(\hbeta_s,\hbeta)=1+\frac{1}{\gamma}\E{w^2(1-s)}\eta_{sw^2}^{-1}(1-\gamma),
\end{align*}
where $\eta_{w^2}$ and $\eta_{sw^2}$ are the $\eta$-transforms of $w^2$ (where $w$ is the distribution of scales of $x_i$) and $sw^2$ (where $s$ is defined above), respectively.  Moreover, the expectation is taken with respect to the joint distribution of $s,w^2$ as defined above. In particular for leverage score sampling, $s$ is a Bernoulli variable with success probability $\min[r/p(1-1/(1+w^2\eta_{w^2}^{-1}(1-\gamma)), 1]$.
\label{theorem:sampling elliptical model}
\end{theorem}


If $w_i$-s are all equal to unity, one can check that the results are the same as for orthogonal projection or uniform sampling on rotationally invariant $X$. This is because all leverage scores are nearly equal.  
We specialize this result to greedy sampling in Section \ref{sec: greedy leverage supp} in the supplement.

\section{Simulations and data analysis}
\label{sec:simulation}
We report some simulations to verify our results. In Figure \ref{fig:simulation}, we take $n=2000$, and $p=100$ or 800, respectively. Each row of $X$ is generated iid from $\N(0,I_p)$. The simulation results of VE and the error bar are the mean and one standard deviation over 10 repetitions. We also plot our theoretical results (bold lines) in the figures. The $x$-axis is on a log scale. We observe that the simulation results match the theoretical results very well. Also note that in this case, where the data is uniformly distributed, sampling methods work as well as orthogonal and Hadamard projection, while Gaussian and iid projections perform worse. Additional simulations with correlated t-distributed data and leverage sampling are in Section \ref{sec:nonuniform} and \ref{section:leverage:ex} in the supplement.


\begin{figure}
\centering
\includegraphics[width=0.8\linewidth]{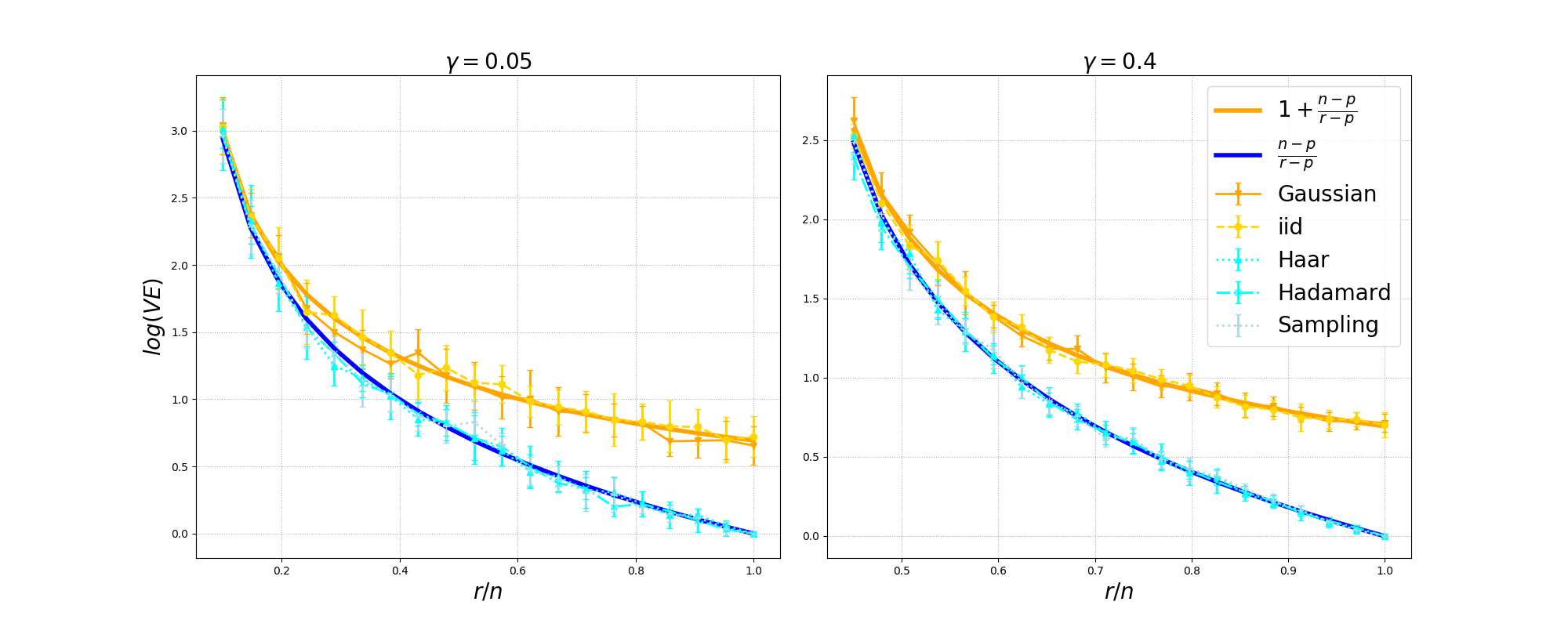}
\caption{Verification of our theory. Solid lines show the theoretical formulas for variance efficiency, while dashed lines show the simulation results, for $\gamma=0.05$ (left, log of VE shown), and $\gamma=0.4$ (right). Showing SD over 10 trials of Gaussian, iid, Haar, Hadamard sketching, and sampling.}
\label{fig:simulation}
\end{figure}
\begin{figure}
\centering
\includegraphics[width=0.8\linewidth]{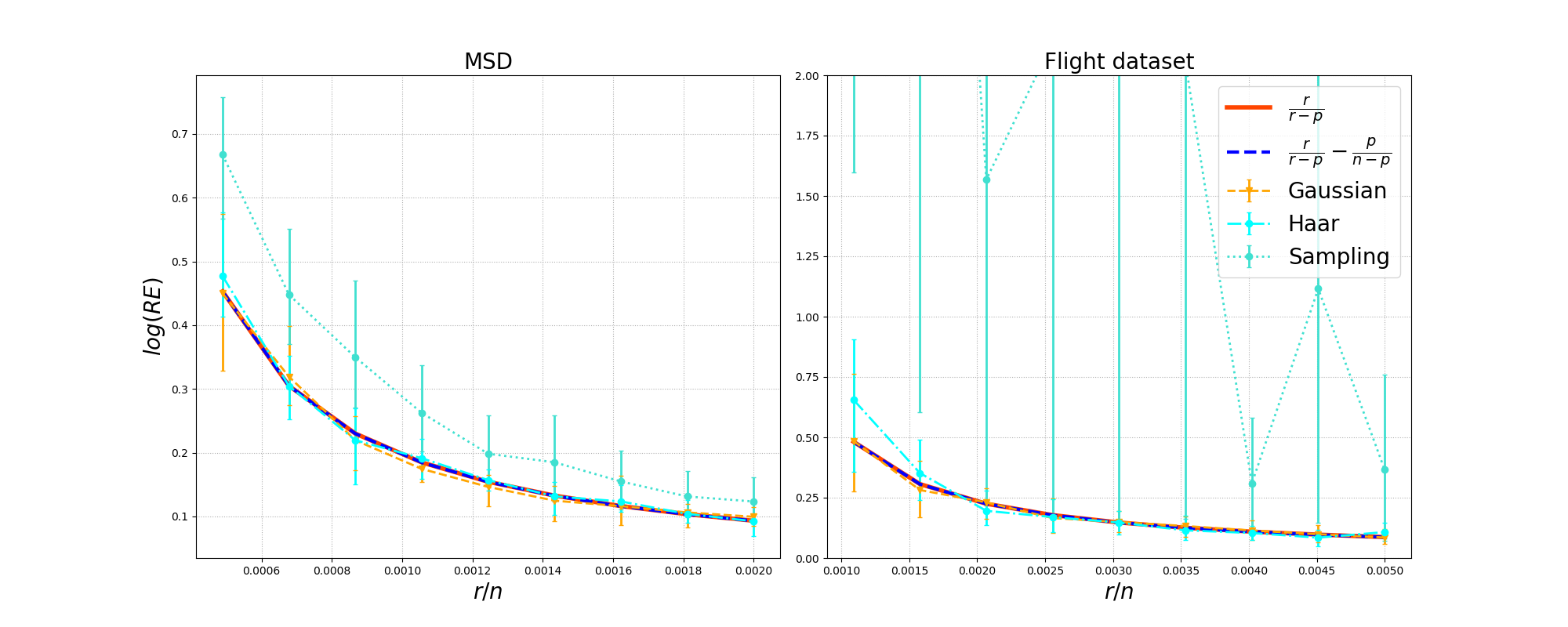}
\caption{Empirical data analysis.  Left: Million Song dataset. Right: Flight dataset.}
\label{fig: empirical}
\end{figure}

We test our results on the Million Song Year Prediction Dataset (MSD) \citep{Bertin-Mahieux2011} ($n=515344$, $p=90$) and the New York flights dataset \citep{nycflights13} ($n=60449$, $p=21$). The columns are standardized to have zero mean and unit standard deviation. 
We compare three different sketching methods: Gaussian projection, randomized Hadamard projection, and uniform sampling. For each target dimension $r$, we show the mean, as well as 5\% and 95\% quantiles over 10 repetitions. The results for RE are in Figure \ref{fig: empirical}, and the results for OE are in Section \ref{sec:oe empirical supp} in the supplement. For Gaussian and Hadamard projections our theory agrees well with the experiments. However, uniform sampling has very large variance, especially on the flight dataset. Our theory is less accurate here, because it requires the data matrix to be rotationally invariant, which may not hold. 



\section*{Discussion}
A direction for future work is to study sketching in (kernel) ridge regression (perhaps possible using RMT), lasso (perhaps possible using approximate message passing). Another question is to understand the variability of sketching methods.




\subsubsection*{Acknowledgments}

The authors thank Ken Clarkson, Miles Lopes, Michael Mahoney, Mert Pilanci, Garvesh Raskutti, David Woodruff for helpful discussions. ED was partially supported by NSF BIGDATA grant IIS 1837992. SL was partially supported by a Tsinghua University Summer Research award. A version of our manuscript is available on arxiv at \url{https://arxiv.org/abs/1810.06089} (the first version had the title "A new theory for sketching in linear regression"), and a short version appears at NeurIPS 2019.

{\small
\setlength{\bibsep}{0.2pt plus 0.3ex}
\bibliographystyle{plainnat-abbrev}
\bibliography{references}
}








\numberwithin{equation}{section}
\appendix
\section{Appendix}
\label{appen}

\subsection{Mathematical background}\label{sec:rmt}
In this section we introduce a few needed definitions from random matrix theory and free probability. See \cite{bai2009spectral, paul2014random,yao2015large} for references on random matrix theory and \cite{voiculescu1992free,hiai2006semicircle,nica2006lectures,anderson2010introduction} for references on free probability. The reader interested in the structure of the proofs may skip to the following sections, and refer back to this section when needed.

The data are the $n\times p$ matrix $X$ and contain $p$ features of $n$ samples. 
Recall that for an $n\times p$ matrix $M$ with $n\ge p$, such that the eigenvalues of $n^{-1} M^\top M$ are $\lambda_j$, the empirical spectral distribution (e.s.d.) of $M$ is the cdf of the eigenvalues. Formally, it is the mixture $\frac1p\sum_{j=1}^p\delta_{\lambda_j}$
where $\delta_\lambda$ denotes a point mass distribution at $\lambda$.

The aspect ratio of $X$ is $\gamma_p = p/n$. We consider limits with $p\to\infty$ and 
$\gamma_p\to\gamma\in(0,\infty)$. If the e.s.d. converges weakly, as $n,p,\to\infty$, to some distribution $F$, this is called the limiting spectral distribution (l.s.d.) of $X$.

The Stieltjes transform of a distribution $F$ is defined for complex valued numbers with positive imaginary part, for which $z \in \mathbb{C}^+=\{z \in \mathbb{C}: \mathrm{Imag}(z)>0 \}$ as
\begin{equation*}
m(z) = \int \frac{\, dF(x)}{x - z}.
\end{equation*}

This can be used to define the $S$-transform of a distribution $F$, which is a key tool for free probability. This is defined as the solution to the equation, which is unique under certain conditions (see \cite{voiculescu1992free}),
\begin{equation*}
m_F(\frac{z+1}{zS(z)})=-zS(z).
\end{equation*}
In addition to Stieltjes transform, there are other useful transforms of a distribution. The $\eta$-transform of $F$ is defined by
\begin{equation} 
\eta_F(z)=\int\frac{1}{1+zx}dF(x)=\frac{1}{z}m_F(-\frac{1}{z}).
\label{def_eta_transform}
\end{equation}

Now let us give a typical and key example of a result from asymptotic random matrix theory. Suppose the rows of $X$ are iid $p$-dimensional observations $x_i$, for $i=1,\ldots,n$. Let $\Sigma$ be the covariance matrix of $x_i$. We consider a model of the form $X = Z\Sigma^{1/2}$, where the entries of $Z$ are iid with zero mean and unit variance, and the e.s.d. of $\Sigma$ converges weakly to a probability distribution $H$. Then the Marchenko-Pastur theorem (see \cite{marchenko1967distribution, bai2009spectral}) states that the e.s.d. of the sample covariance matrix ${n}^{-1} X^\top X$ converges almost surely in distribution to a distribution $F_\gamma$, whose Stieltjes transform is the unique solution of a certain fixed point equation. A lot of information can be extracted from this equation, and we will see examples in the proofs.

Random matrix theory is related to free probability. Here we briefly introduce a few concepts in free probability that will be used in the proofs.  A non-commutative probability space is a pair $(\mathcal{A},\tau)$, where $\mathcal{A}$ is a non-commutative algebra with the unit 1 and $\tau:\mathcal{A}\rightarrow\mathbb{R}$ is a linear functional such that  $\tau(1)=1$. If $\tau(ab)=\tau(ba)$ for all $a,b\in\mathcal{A}$, then $\tau$ is called a trace. If $\tau(a^*a)\geq0$, for all $a\in\mathcal{A}$ and the equality holds iff $a=0$, then the trace $\tau$ is called faithful. There is also an inner product, and thus a norm, induced by $\tau$:
$$\langle a,b\rangle=\tau(a^*b),\,\|a\|^2=\langle a,a\rangle.$$
For $a\in\A$ with $a=a^*$, the spectral radius $\rho(a)$ is defined by $\rho(a)=\lim_{k\rightarrow\infty}|\tau(a^{2k})|^{\frac{1}{2k}}$, whenever this limit exists. The elements in $\mathcal{A}$ are called (non-commutative) random variables, and the law (or distribution) of a random variable $a\in\mathcal{A}$ is a linear functional on the polynomial algebra $[X]$ that maps any $P(x)\in[X]$ to $\tau(P(a))$. The connection between the non-commutative probability space and classical probability theory is the spectral theorem, stating that for all $a\in\A$ with bounded spectral radius, there exists a unique Borel probability measure $\mu_a$ such that  for any polynomial $P(x)\in[X]$,
$$\tau(P(x))=\int P(t)d\mu_a(t).$$
We can also define the Stieltjes transform of $a\in\A$ by
$$m_a(z)=\tau((a-z)^{-1})=-\sum_{k=0}^\infty\frac{\tau(a^k)}{z^{k+1}},$$
which is the same as the Stieltjes transform of the probability measure $\mu_a$ associated with $a$. 

Returning to random matrices, one can easily verify that
$$(\A=(L^{\infty-}\otimes M_n(\mathbb{R})),\tau=\frac{1}{n}\mathbb{E}\tr)$$
is a non-commutative probability space and $\tau=\frac{1}{n}\mathbb{E}\tr$ is a faithful trace, where $L^{\infty-}$ denotes the collection of random variables with all moments finite. For $X\in L^{\infty-}\otimes M_n(\mathbb{R})$, the spectral radius is $\|\|X\|_{op}\|_{L^\infty}$, the essential supremum of the operator norm. The probability measure corresponding to the law of $X$ is the expected empirical spectral distribution
$$\mu_X=\frac{1}{n}\mathbb{E}\sum_{i=1}^n\delta_{\lambda_i},$$
where $\lambda_i$-s are the eigenvalues of $X$.

A collection of random variables $\{a_1,\ldots,a_k\}\subset\mathcal{A}$ are said to be freely independent (or just free) if 
$$\tau[\Pi_{j=1}^mP_j(a_{i_j}-\tau(P_j(a_{i_j})))]=0,$$
for any positive integer $m$, any polynomials $P_1,\ldots,P_m$ and any indices $i_1,\ldots,i_m\in[k]$ with no two adjacent $i_j$ equal \cite{voiculescu1992free,nica2006lectures}. A sequence of random variables $\{a_{1,n},\ldots,a_{k,n}\}_{n\geq1}\subset\mathcal{A}$ is said to be asymptotically free if
$$\tau[\Pi_{j=1}^mP_j(a_{i_j,n}-\tau(P_j(a_{i_j,n})))]\rightarrow0,$$
for any positive integer $m$, any polynomials $P_1,\ldots,P_m$ and any indices $i_1,\ldots,i_m\in[k]$ with no two adjacent $i_j$ equal. If $a,b\in\mathcal{A}$ are free, then the law of their product is called their freely multiplicative convolution, and is denoted $a\boxtimes b$. 

A fundamental result is that the $S$-transform of $a\boxtimes b$ equals the products of $S_a(z)$ and $S_b(z)$ \cite{voiculescu1992free,nica2006lectures}. In addition, random matrices with sufficiently independent entries and "near-uniformly" distributed eigenvectors tend to be asymptotically free in the high-dimensional limit. This is a powerful tool to find the l.s.d. of a product of random matrices.


\subsection{Finite-sample results for fixed matrices}
We start with finite-sample results that are true for any fixed sketching matrix $S$. These results will be fundamental in all remaining work. Later, to simplify these results, we will make probabilistic assumptions. First we find a more explicit form of the relative efficiencies.

\begin{proposition}[Finite $n$ results] Taking expectations only over the noise $\ep$ and $\ep_t$, fixing $X$ and $S$, the efficiencies have the following forms:
\begin{align*}
&VE(\hbeta_s,\hbeta)|X,S=
\frac{\tr[Q_1]}{\tr[(X^\top X)^{-1}]},\, PE(\hbeta_s,\hbeta)|X,S
=\frac{\tr[Q_2]}{p},\\
&OE(\hbeta_s,\hbeta)|X,S
=\frac{1+ x_t^\top Q_1 x_t}{1+ x_t^\top (X^\top X)^{-1}x_t},
\end{align*}
where $Q_0= (X^\top S^\top SX)^{-1}X^\top S^\top S$, while
$Q_1= Q_0 Q_0^\top$, and
$Q_2= X Q_1 X^\top$.
\label{proposition:finite n calculation}
\end{proposition}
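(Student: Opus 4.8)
The plan is to recall the definitions of the three efficiency measures (variance efficiency $VE$, prediction efficiency $PE$, out-of-sample/prediction-at-a-point efficiency $OE$) as ratios of risks of the sketched estimator $\hbeta_s$ to the full least-squares estimator $\hbeta$, and then compute each risk in closed form by taking expectations over the Gaussian (or mean-zero, finite-variance) noise only, holding $X$ and $S$ fixed. Writing $y = X\beta + \ep$, the full OLS estimator is $\hbeta = (X^\top X)^{-1}X^\top y$, while the sketched estimator solves the reduced least-squares problem with design $SX$ and response $Sy$, giving $\hbeta_s = (X^\top S^\top S X)^{-1} X^\top S^\top S y = Q_0\, y$ with $Q_0$ as defined in the statement. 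The key algebraic observation is that both estimators are unbiased linear functions of $y$: $\mathbb{E}[\hbeta_s \mid X,S] = Q_0 X\beta = \beta$ (since $Q_0 X = I_p$) and likewise $\mathbb{E}[\hbeta \mid X] = \beta$, so every risk here is purely a variance term.

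**Computing the three numerators.** First I would compute $\mathrm{Cov}(\hbeta_s \mid X,S) = \sigma^2 Q_0 Q_0^\top = \sigma^2 Q_1$, and similarly $\mathrm{Cov}(\hbeta \mid X) = \sigma^2 (X^\top X)^{-1}$. The variance efficiency is the ratio of total estimation variances (trace of the covariance, equivalently $\mathbb{E}\|\hbeta_s - \beta\|^2 / \mathbb{E}\|\hbeta-\beta\|^2$), which immediately gives $\tr[Q_1]/\tr[(X^\top X)^{-1}]$ after the $\sigma^2$ cancels. For the prediction efficiency, the relevant quantity is the in-sample prediction error $\mathbb{E}\|X\hbeta_s - X\beta\|^2 = \sigma^2\tr[X Q_1 X^\top] = \sigma^2 \tr[Q_2]$, and for the full estimator $\mathbb{E}\|X\hbeta - X\beta\|^2 = \sigma^2\tr[X(X^\top X)^{-1}X^\top] = \sigma^2 p$ (the trace of a rank-$p$ projection); dividing gives $\tr[Q_2]/p$. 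For the out-of-sample efficiency, the prediction error at a fresh point $x_t$ with fresh noise $\ep_t$ is $\mathbb{E}[(x_t^\top\hbeta_s + \ep_t - x_t^\top\beta)^2] = \sigma^2(1 + x_t^\top Q_1 x_t)$, using independence of $\ep_t$ and $\ep$ and the variance formula $\mathrm{Var}(x_t^\top\hbeta_s) = x_t^\top \mathrm{Cov}(\hbeta_s) x_t$; the analogous computation for $\hbeta$ gives $\sigma^2(1 + x_t^\top (X^\top X)^{-1} x_t)$, and the ratio is the claimed expression.

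**Main obstacle.** The computations themselves are routine linear-algebra-and-second-moment manipulations, so the only real subtlety is bookkeeping: making sure the definitions of $VE$, $PE$, $OE$ used in the body of the paper genuinely are these noise-only risk ratios (rather than, say, including a bias term from $\beta$, which vanishes here precisely because $Q_0 X = I_p$), and confirming that the $\sigma^2$ factors cancel identically in all three ratios so the results are free of the noise level. I would therefore open by stating the estimator forms and the identity $Q_0 X = I_p$ explicitly, then dispatch each of the three cases in one or two lines. A secondary point worth a sentence is verifying that $X^\top S^\top S X$ is invertible for the sketched estimator to be well-defined — this is an implicit assumption on the fixed $S$ that should be noted, and under it the displayed formulas follow directly.
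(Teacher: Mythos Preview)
Your proposal is correct and follows essentially the same approach as the paper: both write $\hbeta_s=\beta+Q_0\ep$, $\hbeta=\beta+(X^\top X)^{-1}X^\top\ep$, and then compute the three risk ratios by taking second moments over $\ep$ (and $\ep_t$ for $OE$), with the $\sigma^2$ factors cancelling. The only cosmetic difference is that you phrase things via covariance matrices and the unbiasedness identity $Q_0X=I_p$, whereas the paper introduces the hat matrices $H=X(X^\top X)^{-1}X^\top$ and $\tilde H=XQ_0$ and uses $\tr[H]=p$ directly; the underlying computations are identical.
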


\begin{proof}
\label{proposition:finite n calculation:pf}
 The OLS before and after sketching give the estimators $\hbeta$ and $\hbeta_s$
\begin{align*}
\hat{\beta}_{full}&=(X^\top X)^{-1}X^\top Y
=\beta+(X^\top X)^{-1}X^\top \ep,\\
\hat{\beta}_{sub}&=(\tilde{X}^\top \tilde{X})^{-1}\tilde{X}^\top \tilde{Y}
=\beta+(\tilde{X}^\top \tilde{X})^{-1}\tilde{X}^\top \ep
=\beta+ Q_0\ep
.
\end{align*}
We define the "hat" matrices
\begin{align*}
&H= X(X^\top X)^{-1}X^\top ,\\
&\tilde{H}= X(\tilde{X}^\top \tilde{X})^{-1}\tilde{X}^\top S
=X(X^\top S^\top SX)^{-1}X^\top S^\top S
= XQ_0.
\end{align*}
These are both projection matrices, i.e., they satisfy the relations
$H^2=H,\tilde{H}^2=\tilde{H}.$ By our assumptions, we have that $\EE[\ep]{\ep}=0_n$, $\EE[\ep]{\ep\ep^\top}=\sigma^2I_n$, $\tr[H]=\tr[\tilde{H}]=p$. Therefore, we can calculate as follows. 
\benum
\item Variance efficiency:
\begin{align*}
\EE[\ep]{\|\hbeta-\beta\|^2}&
=\EE[\ep]{\|(X^\top X)^{-1}X^\top \ep\|^2}
=\sigma^2\tr[(X^\top X)^{-1}]\\
\EE[\ep]{\|\hbeta_s-\beta\|^2}&
=\EE[\ep]{\|Q_0\ep\|^2}=\sigma^2\tr(Q_0Q_0^\top).
\end{align*}
This proves the formula for $VE$.

\item Prediction efficiency:
\begin{align*}
\EE[\ep]{\|X\beta-X\hbeta\|^2}
&=\EE[\ep]{\|H\ep\|^2}
=\sigma^2\tr[H] = p\sigma^2,\\
\EE[\ep]{\|X\beta-X\hbeta_s\|^2}&
=\EE[\ep]{\|\tilde H\ep\|^2}
=\sigma^2\tr[\tilde{H}^\top \tilde{H}]
=\sigma^2\tr(Q_2).
\end{align*}
This finishes the calculation for $PE$.


\item Out-of-sample efficiency: Here $\ep_t$ is the noise in the test datapoint.
\begin{align*}
\EE[\ep,\ep_t]{(y_t-x_t^\top \hbeta)^2}&
=\EE[\ep,\ep_t]{(\ep_t-x_t^\top (X^\top X)^{-1}X^\top \ep)^2}\\
&=\EE[\ep,\ep_t]{\ep_t^2+\ep^\top X(X^\top X)^{-1}x_tx_t^\top (X^\top X)^{-1}X^\top \ep}\\
&=\sigma^2(1+ x_t^\top (X^\top X)^{-1}x_t),\\
\EE[\ep,\ep_t]{(y_t-x_t^\top \hbeta_s)^2}&
=\EE[\ep,\ep_t]{(\ep_t-x_t^\top Q_0\ep)^2}=\sigma^2(1+ x_t^\top  Q_0  Q_0^\top x_t).
\end{align*}
\eenum 
This finishes the proof. 
\end{proof}

The expressions simplify considerably for orthogonal matrices $S$. Suppose that $S$ is an $r\times n$ matrix such that $SS^\top = I_r$, then we have the following result:

\begin{proposition}[Finite $n$ results for orthogonal $S$] When $S$ is an orthogonal matrix, the above formulas simplify to
\begin{align*}
VE
=\frac{\tr[(X^\top S^\top SX)^{-1}]}{\tr[(X^\top X)^{-1}]},\quad
PE
=\frac{\tr[(X^\top S^\top SX)^{-1}X^\top X]}{p},
\end{align*}
\begin{align*}
OE
=\frac{1+ x_t^\top (X^\top S^\top SX)^{-1} x_t}{1+ x_t^\top (X^\top X)^{-1}x_t}.
\end{align*}
\label{proposition:finite n calculation for orthogonal S}
\end{proposition}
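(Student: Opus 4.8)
The plan is to deduce this directly from Proposition~\ref{proposition:finite n calculation} by simplifying the matrices $Q_1$ and $Q_2$ under the extra hypothesis $SS^\top = I_r$. The key observation is that $P := S^\top S$ is then an orthogonal projection: it is visibly symmetric, and it is idempotent since $P^2 = S^\top (SS^\top) S = S^\top I_r S = S^\top S = P$. So the whole argument reduces to the single algebraic identity $P^\top P = P$.

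Concretely, I would write $Q_0 = (X^\top P X)^{-1} X^\top P$ and compute
\[
Q_1 = Q_0 Q_0^\top = (X^\top P X)^{-1} X^\top P \, P^\top X (X^\top P X)^{-1}
= (X^\top P X)^{-1} (X^\top P X) (X^\top P X)^{-1}
= (X^\top P X)^{-1},
\]
where the middle step uses $P^\top P = P^2 = P$. Hence $Q_1 = (X^\top S^\top S X)^{-1}$ and $Q_2 = X Q_1 X^\top = X(X^\top S^\top S X)^{-1} X^\top$. Substituting these into the three formulas of Proposition~\ref{proposition:finite n calculation} gives the claimed expressions for $VE$ and $OE$ immediately; for $PE$ one additionally applies cyclic invariance of the trace to rewrite $\tr[X(X^\top S^\top S X)^{-1} X^\top] = \tr[(X^\top S^\top S X)^{-1} X^\top X]$.

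There is essentially no obstacle here — the statement is a one-line corollary once the idempotence of $S^\top S$ is noticed. The only point worth a remark is the standing invertibility assumption on $X^\top S^\top S X$ (equivalently, $SX$ has full column rank $p$, which forces $r \ge p$), the same assumption already implicit wherever $Q_0$ appears; under it, every step above is an exact identity valid for arbitrary fixed $X$ and orthogonal $S$, requiring no probabilistic input.
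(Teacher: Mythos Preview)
Your proof is correct and follows essentially the same approach as the paper: both observe that $SS^\top = I_r$ forces $(S^\top S)^2 = S^\top S$, use this to simplify $Q_1 = Q_0 Q_0^\top$ to $(X^\top S^\top S X)^{-1}$, and then substitute into Proposition~\ref{proposition:finite n calculation}. Your write-up is slightly more explicit (spelling out the projection property, the cyclic trace step for $PE$, and the invertibility caveat), but the underlying argument is identical.
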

\begin{proof}
Since $S$ satisfies $SS^\top=I_r$, we have $(S^\top S)^2=S^\top S$. Thus, $Q_1 = Q_0Q_0^\top = (X^\top S^\top SX)^{-1}$. With this, the results follow directly from Proposition \ref{proposition:finite n calculation}.
\end{proof}
Actually these formulas hold for any $S$ s.t. $X^\top S^\top SX$ is nonsingular and $S^\top S$ is idempotent.

\subsection{Proof of Theorem \ref{gsdx}}
\label{gsdx:pf}
The proof below utilizes the orthogonal invariance of Gaussian matrices and properties of Wishart matrices. For any $X\in\mathbb{R}^{n\times p}$ with $n\ge p$ and with full column rank, we have the singular value decomposition (SVD) $X=U\Lambda V^\top $, where $U\in\mathbb{R}^{n\times p},V\in\mathbb{R}^{p\times p}$ are both orthogonal matrices, while $\Lambda\in\mathbb{R}^{p\times p}$ is a diagonal matrix, whose diagonal entries are the singular values of $X$. Therefore
\begin{align*}
VE(\hbeta_s,\hbeta)&=
\frac{\EE{\tr((X^\top S^\top SX)^{-2}X^\top (S^\top S)^2X)}}{\tr[(X^\top X)^{-1}]}\\
&=\frac{\EE{\tr(\Lambda^{-2}(U^\top S^\top SU)^{-1}U^\top (S^\top S)^2U(U^\top S^\top SU)^{-1})}}{\tr(\Lambda^{-2})},\\
PE(\hbeta_s,\hbeta)&=\frac{\EE{\tr((X^\top S^\top SX)^{-1}X^\top X(X^\top S^\top SX)^{-1}X^\top (S^\top S)^2X)}}{p}\\
&=\frac{\EE{\tr((U^\top S^\top SU)^{-2}U^\top (S^\top S)^2U)}}{p},\\
OE(\hbeta_s,\hbeta)&=\frac{1+ \EE{x_t^\top (X^\top S^\top SX)^{-1}X^\top (S^\top S)^2X(X^\top S^\top SX)^{-1} x_t}}{1+ \EE{x_t^\top (X^\top X)^{-1}x_t}}\\
&=\frac{1+\EE{x_t^\top V\Lambda^{-1}(U^\top S^\top SU)^{-1}U^\top (S^\top S)^2U(U^\top S^\top SU)^{-1}\Lambda^{-1}V^\top x_t}}{1+\EE{x_t^\top V\Lambda^{-2}V^\top x_t}}.
\end{align*}
We can see that the first two relative efficiencies do not depend on the right singular vectors of $X$.

We denote by $U^\perp\in\mathbb{R}^{n\times(n-p)}$ a complementary orthogonal matrix of $U$, such that $UU^\top +U^{\perp}U^{\perp\top}=I_n$. Let $S_1=SU$, $S_2=SU^\perp$, of sizes $r\times p$, and $r\times (n-p)$, respectively. Then $S_1$ and $S_2$ both have iid $\N(0,1)$ entries and they are independent from each other, because of the orthogonal invariance of a Gaussian random matrix. Also note that 
$$
SS^\top =S(UU^\top +U^{\perp}U^{\perp\top})S^\top =S_1S_1^\top +S_2S_2^\top ,
$$
and 
\begin{align*}
S_1^\top S_1&\sim\mathcal{W}_p(I_p,r),\quad S_2S_2^\top\sim\mathcal{W}_r(I_r,n-p),
\end{align*}
where $\mathcal{W}_p(\Sigma,r)$ is the Wishart distribution with $r$ degrees of freedom and scale matrix $\Sigma$. Then by the properties of Wishart distribution \cite[e.g.,][]{anderson1958introduction}, when $r-p>1$, we have
\begin{align*}
\EE{(S_1^\top S_1)^{-1}}=\frac{I_p}{r-p-1},\, \EE{S_2S_2^\top }=(n-p)I_r.
\end{align*}
Hence the numerator of $VE$ equals
\begin{align*}
&\EE{\tr\left(\Lambda^{-2}(U^\top S^\top SU)^{-1}U^\top (S^\top S)^2U(U^\top S^\top SU)^{-1}\right)}\\
&=\EE{\tr\left(\Lambda^{-2}(S_1^\top S_1)^{-1}S_1(S_1S_1^\top +S_2S_2^\top )S_1^\top (S_1^\top S_1)^{-1}\right)}\\
&=\tr\left(\Lambda^{-2}(I_p+\EE{(S_1^\top S_1)^{-1}S_1^\top S_2S_2^\top S_1(S_1^\top S_1)^{-1}})\right)\\
&=\tr\left(\Lambda^{-2}(I_p+\EE{(S_1^\top S_1)^{-1}S_1^\top (n-p)I_pS_1(S_1^\top S_1)^{-1}})\right)\\
&=\tr\left(\Lambda^{-2}(I_p+(n-p)\EE{(S_1^\top S_1)^{-1}})\right)\\
&=\tr\left(\Lambda^{-2}(1+\frac{n-p}{r-p-1})\right),
\end{align*}
and the denominator $\tr[(X^\top X)^{-1}]=\tr[(V\Lambda^2V^\top )^{-1}]=\tr(\Lambda^{-2}),$
so we have
$VE(\hbeta_s,\hbeta)=1+\frac{n-p}{r-p-1}$. This finishes the calculation for VE. See Section \ref{gsdx:pf2} for the remaining details of this theorem. 

\subsection{Proof of Theorem \ref{theorem:iidS,deterministic X}}
\label{theorem:iidS,deterministic X:pf}

The proof idea is to use a Lindeberg swapping argument to show that the results from Gaussian matrices extend to iid matrices provided that the first two moments match.

Since the error criteria are invariant under the scaling of $S$, we can assume without loss of generality that the entries of $S$ are $n^{-1/2}s_{ij}$, where $s_{ij}$ are iid random variables of zero mean, unit variance, and finite fourth moment. We also let $T=n^{-1/2}t_{ij}$, $t_{ij}$ being iid standard Gaussian random variables, for all $i\in[r]$, $j\in[n]$. 

Let $s$ (respectively, $t$) be the $rn$-dimensional vector whose entries are $s_{ij}$ (respectively, $t_{ij}$) aligned by columns. Then there is a bijection from $s$ to $S$, and from $t$ to $T$. We already know that the desired results for $VE$ and $PE$ hold if $S=T$, and they only depend on $\EE{\tr(Q_1)}$ and $\EE{\tr(Q_2)}$.

For $OE$, under the extra assumptions that $X=Z\Sigma^{1/2}$, we already proved in Theorem \ref{gsdx} that
\begin{align*}
&\EE{x_t^\top(\frac{1}{p}X^\top X)^{-1}x_t}-\tr[(\frac{1}{p}Z^\top Z)^{-1}]\xrightarrow{a.s.}0,\\
&\EE{x_t^\top Q_1x_t}-\tr(Q_1)\xrightarrow{a.s.}0,
\end{align*}
so the results for $OE$ will only depend on $\EE{\tr[Q_1]}$ as well. Thus we only need to show that $\EE{\tr[Q_1(S,X)}$ has the same limit as $\EE{\tr[Q_1(T,X)]}$, and $\EE{\tr[Q_2(S,X)}$ has the same limit as $\EE{\tr[Q_2(T,X)]}$, as $n$ goes to infinity.

Since $SX$ has a nonzero chance of being singular, it is necessary first to show the universality for a regularized trace. See Section \ref{lemma:f,g,limits:pf} for the proof of Lemma \ref{lemma:f,g,limits} below. In the rest of the proof, we let $N=rn$.

\begin{lemma}[Universality for regularized trace functionals]
Let $z_n=\frac{i}{n}\in\mathbb{C}$, where $i$ is the imaginary unit. Define the functions $f_N,g_N:\mathbb{R}^{N}\rightarrow\mathbb{R}$ as
\begin{align}
f_N(s)&=\frac{1}{p}\tr[(X^\top S^\top SX-z_nI_p)^{-2}X^\top(S^\top S)^2X],\label{def_f}\\
g_N(s)&=\frac{1}{p}\tr[(X^\top S^\top SX-z_nI_p)^{-1}X^\top X(X^\top S^\top SX-z_nI_p)^{-1}X^\top(S^\top S)^2X],\label{def_g}
\end{align}
Then $\limn|\EE{f_N(s)}-\EE{f_N(t)}|=0,$ $\limn|\EE{g_N(s)}-\EE{g_N(t)}|=0$.
\label{lemma:f,g,limits}
\end{lemma}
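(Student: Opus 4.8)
The plan is to run a Lindeberg replacement argument on the $N=rn$ scalar entries of the sketching matrix, swapping the $s_{ij}$ for the Gaussian $t_{ij}$ one at a time and controlling the error of each swap by a third-order Taylor expansion. First I would fix an arbitrary ordering of the coordinates $1,\dots,N$ and define the hybrid vectors $w^{(k)}=(t_1,\dots,t_{k-1},s_k,s_{k+1},\dots,s_N)$, so that $w^{(1)}$ corresponds to the all-$s$ matrix $S$ and $w^{(N+1)}$ to the all-$t$ matrix $T$. Writing $h_N$ for either $f_N$ or $g_N$, the telescoping identity gives $|\EE{h_N(s)}-\EE{h_N(t)}|\le\sum_{k=1}^N|\EE{h_N(w^{(k)})}-\EE{h_N(w^{(k)}\text{ with }s_k\to t_k)}|$. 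For each $k$, condition on all coordinates except the $k$-th and Taylor-expand $h_N$ in the single variable $x_k$ around $0$ to second order with integral remainder: since $s_k$ and $t_k$ both have mean zero, variance $n^{-1}$ (after the $n^{-1/2}$ scaling), and finite fourth moment, the zeroth, first, and second order terms cancel between the $s$-side and the $t$-side, and the difference is bounded by a constant times $\EE{|s_k|^3+|t_k|^3}\le C n^{-3/2}$ times $\sup_{x_k}|\partial_{x_k}^3 h_N|$ over the relevant segment.

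The heart of the argument is therefore a deterministic bound: I must show that the third partial derivative of $f_N$ (and of $g_N$) with respect to a single entry $S_{ab}=n^{-1/2}s_{ab}$ is $O(n^{\alpha})$ uniformly in the other entries, with $\alpha$ small enough that $N\cdot n^{-3/2}\cdot n^{\alpha}=r n\cdot n^{-3/2+\alpha}\to 0$, i.e. $\alpha<1/2$. This is exactly where the regularization $z_n=i/n$ earns its keep: the resolvent $R=(X^\top S^\top SX - z_nI_p)^{-1}$ satisfies $\|R\|_{op}\le 1/\mathrm{Imag}(z_n)=n$, and its derivative in an entry of $S$ is a sum of terms of the form $R(\partial_{S_{ab}}(X^\top S^\top SX))R$, each of which introduces one extra factor of $\|R\|_{op}\le n$ but only bounded factors from $\partial_{S_{ab}}(X^\top S^\top SX)=X^\top(e_ae_b^\top S + S^\top e_be_a^\top)X$ (here one uses that $\|X\|_{op}$ and $\|S\|_{op}$ are bounded, or at most polylogarithmic, under the finite-moment assumptions, possibly after a standard truncation of the $s_{ij}$). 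Each of $f_N,g_N$ already carries a $1/p$ in front and a bounded number of resolvent factors, so differentiating three times produces at most a fixed number of extra $\|R\|_{op}\le n$ factors; the reason the bound still beats $n^{3/2}$ is that the single-entry perturbation $e_ae_b^\top$ has rank one, so each derivative term is a trace of a rank-one (or low-rank) matrix and is controlled not by $p\cdot n^{\text{(power)}}$ but by a product of $\ell_2$ norms of individual columns/rows of $X$, which are $O(1)$; carefully bookkeeping the rank-one structure is what keeps the exponent below $1/2$. I would isolate this as a sublemma: ``for any fixed $a,b$, $|\partial_{s_{ab}}^\ell f_N|\le C_\ell$ (or $C_\ell\,\mathrm{polylog}\,n$) for $\ell=1,2,3$, uniformly over the hybrid configurations,'' and prove it by repeated application of the resolvent derivative formula $\partial(A^{-1})=-A^{-1}(\partial A)A^{-1}$.

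The main obstacle I anticipate is precisely this third-derivative estimate: one must track how many uncancelled factors of $\|R\|_{op}\le n$ appear after three differentiations and verify that the rank-one nature of each $\partial_{S_{ab}}(S^\top S)$-type increment, together with the prefactor $1/p$ and the trace structure, compresses what naively looks like an $O(n^{3})$ or worse bound down to $o(n^{1/2})$ per entry. A secondary technical point is handling the possibly heavy-tailed $s_{ij}$: since only a finite fourth moment is assumed, one should first truncate $s_{ij}$ at level $n^{\delta}$ for small $\delta>0$, argue that the truncation changes $\EE{f_N},\EE{g_N}$ negligibly (using the resolvent bound $\|R\|_{op}\le n$ to keep the integrand a priori bounded by a polynomial in $n$ and the tail probability $o(n^{-2})$), recenter/rescale, and then run the Lindeberg swap on the truncated variables whose third absolute moments are now $O(n^{3\delta}\cdot n^{-3/2})$, still summable against $N=rn$ for $\delta$ small. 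Once the single-entry derivative bound is in hand, assembling the telescoping sum and letting $n\to\infty$ is routine.
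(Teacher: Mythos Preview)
Your Lindeberg framework is exactly what the paper uses (their Lemma~\ref{lemma:universality chatterjee}), and the reduction to a third-derivative bound is the right mechanism. The genuine gap is in the derivative estimate: controlling the resolvent only through $\|R\|_{op}\le 1/\mathrm{Imag}(z_n)=n$ is far too weak to close the argument. After three differentiations of $f_N=\tfrac{1}{p}\tr[R^2 X^\top(S^\top S)^2X]$ you produce terms containing up to five copies of $R$, and the rank-one structure of $\partial_{S_{ab}}(X^\top S^\top SX)$ only collapses the trace to a single bilinear form---it does not reduce the number of operator-norm factors of $R$ that appear. A typical third-derivative term is therefore of order $\tfrac{1}{p}\|R\|_{op}^{5}\cdot O(1)=O(n^{4})$, which blows up rather than being $o(n^{-2})$. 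Note also that $o(n^{-2})$ is the threshold you actually need: since $N=rn\asymp n^{2}$ in this regime, the Lindeberg sum is $N\cdot O(1)\cdot L_N$, so your targeted sublemma ``$|\partial^{3}_{s_{ab}} f_N|\le C_3$'' would still give a divergent $O(n^{2})$; the stated criterion ``$\alpha<1/2$'' should be $\alpha<-1/2$ once one accounts for $r\asymp n$.

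The paper's resolution is different in spirit: the regularization $z_n=i/n$ is there only to make $f_N,g_N$ deterministically well-defined so that the swap is legal, but the derivative bounds do \emph{not} use $\|R\|_{op}\le n$. Instead they invoke the standing hypothesis of Theorem~\ref{theorem:iidS,deterministic X} that the singular values of $X$ lie in $[c^{-1},c]$, together with the Bai--Yin law for the hybrid sketching matrix (whose entries are still independent, mean zero, unit variance, finite fourth moment), to get $\lambda_{\min}(X^\top S^\top SX)\ge c^{-2}(1-\sqrt{\xi})^{2}$ almost surely and hence $\|R\|_{op}=O(1)$. With an $O(1)$ resolvent bound, each $\partial_{s_{ab}}$ contributes only the explicit $n^{-1/2}$ from the scaling plus bounded factors, and the rank-two trace trick (Lemma~\ref{lemma:bound trace(AB)} applied with $B=X^\top Q(S)X$) yields $|\partial^{3}_{s_{ab}} f_N|\le \phi\, n^{-5/2}=\phi\,N^{-5/4}$, so $L_N N=O(N^{-1/4})\to 0$. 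Your proposed truncation step then becomes unnecessary. In short: replace ``$\|R\|_{op}\le n$ from the regularization'' by ``$\|R\|_{op}=O(1)$ from the well-conditioning assumption on $X$ plus Bai--Yin for $S$,'' and the rest of your plan goes through essentially as in the paper.
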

Next we show that the regularized trace functionals have the same limit as the ones we want. See Section \ref{lemma:f_infty,g_infty,limits:pf} for the proof.

\begin{lemma}[Convergence of trace functionals]
Define the functions $f_\infty,g_\infty:\R^{N}\rightarrow\R$
\begin{align}
f_{\infty}(s)&=\frac{1}{p}\tr[(X^\top S^\top SX)^{-2}X^\top(S^\top S)^2X]=\frac{1}{p}\tr[Q_1(S,X)],
\\
g_\infty(s)&=\frac{1}{p}\tr[(X^\top S^\top SX)^{-1}X^\top X(X^\top S^\top SX)^{-1}X^\top(S^\top S)^2X]=\frac{1}{p}\tr[Q_2(S,X)].
\end{align}
Then 
\begin{align*}
&\limn|\EE{f_N(s)}-\EE{f_\infty(s)}|=\limn|\EE{f_N(t)}-\EE{f_\infty(t)}|=0,\\
&\limn|\EE{g_N(s)}-\EE{g_\infty(s)}|=\limn|\EE{g_N(t)}-\EE{g_\infty(t)}|=0.
\end{align*}
\label{lemma:f_infty,g_infty,limits}
\end{lemma}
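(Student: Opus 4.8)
The plan is to show that removing the regularization $z_n = i/n$ from $f_N$ and $g_N$ changes the expectation by $o(1)$, uniformly for both the $s$-entries and the Gaussian $t$-entries. Since the argument is identical for $f$ and $g$, and identical for $s$ and $t$, I would write it once for $f_N(s)$ versus $f_\infty(s)$. Write $A = X^\top S^\top S X$, which is symmetric positive semidefinite, and $B = X^\top (S^\top S)^2 X \succeq 0$. The difference to control is $\frac1p\tr[((A - z_nI)^{-2} - A^{-2})B]$ on the event that $A$ is nonsingular (when $A$ is singular, $f_\infty$ is literally undefined, so I must first argue this event has probability one, or restrict the expectation — see below). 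Using the resolvent identity $(A-z_nI)^{-1} - A^{-1} = z_n (A-z_nI)^{-1}A^{-1}$ and telescoping the square, the difference is a sum of terms each carrying at least one factor of $z_n = i/n$, multiplied by products of resolvents $(A - z_nI)^{-1}$ and inverses $A^{-1}$ and the matrix $B$.

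The key step is a trace/operator-norm bound. On the nonsingular event, $\|(A-z_nI)^{-1}\|_{op} \le \|A^{-1}\|_{op}$ does not hold in general, but $\|(A-z_nI)^{-1}\|_{op} \le 1/\mathrm{dist}(z_n, \mathrm{spec}(A))$, and since $A$ is real PSD and $z_n$ has imaginary part $1/n$, we always have $\|(A-z_nI)^{-1}\|_{op} \le n$. Meanwhile $\tr[A^{-2}B]$ and $\tr[A^{-1}B\,\cdot]$ are bounded by $\|B\|_{op}$ times $\tr[A^{-2}]$ or $\tr[A^{-1}]\|A^{-1}\|_{op}$, etc. So a single naive bound gives something like $|z_n|\cdot n^{O(1)}$, which is not small. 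The fix is to split the expectation according to the smallest eigenvalue $\lambda_{\min}(A)$: on the "good" event $\{\lambda_{\min}(A) \ge n^{-c}\}$ for a suitable small constant $c>0$, every resolvent and inverse is bounded in operator norm by $n^{c}$, so the difference is $O(|z_n| n^{Kc}) = O(n^{-1 + Kc})$, which is $o(1)$ once $c < 1/K$; on the "bad" event $\{\lambda_{\min}(A) < n^{-c}\}$, one uses that $f_N$ and $f_\infty$ are each bounded by a deterministic polynomial in $n$ (since $\|(A - z_nI)^{-1}\| \le n$ always, and on the part of the bad event where $A$ is still nonsingular one can bound $f_\infty$ crudely, or better, absorb it), together with a small-ball / least-singular-value estimate showing $\mathbb{P}(\lambda_{\min}(A) < n^{-c})$ decays polynomially (or faster) in $n$. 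For the Gaussian case this probability bound is classical (least singular value of a Gaussian matrix, Wishart tail bounds as already invoked in the proof of Theorem \ref{gsdx}); for the iid case with finite fourth moment one invokes a standard least-singular-value bound for products $SX$ with $X$ fixed of full column rank, or equivalently for $X^\top S^\top S X$, which follows from Rudelson–Vershynin-type estimates. Because $z_n \to 0$, this also handles the subtlety that $f_\infty$ is only defined off a null set: the bad-event contribution is shown to vanish regardless.

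I would organize the steps as: (i) fix notation $A$, $B$, and the good event $G_n = \{\lambda_{\min}(X^\top S^\top S X) \ge n^{-c}\}$; (ii) on $G_n$, expand $(A-z_nI)^{-2} - A^{-2}$ via the resolvent identity and bound each resulting trace term by $C \,|z_n|\, n^{Kc}\,\|B\|_{op}\,\tr[\text{(something }O(1)\text{)}]$, noting $\|B\|_{op} \le \|S^\top S\|_{op}^2\|X\|_{op}^2$ is $O(1)$ with overwhelming probability by standard bounds on $\|S^\top S\|_{op}$; (iii) choose $c$ small enough that the exponent $-1 + Kc < 0$; (iv) on $G_n^c$, bound $|\EE{(f_N - f_\infty)\mathbf{1}_{G_n^c}}|$ by $(\text{poly}(n))\cdot\mathbb{P}(G_n^c)$ using the deterministic resolvent bound and the least-singular-value tail; (v) conclude the same for $g_N$ vs $g_\infty$ and for $t$ in place of $s$, verbatim.

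The main obstacle is step (iv): establishing a quantitative lower tail for $\lambda_{\min}(X^\top S^\top S X)$ that holds for iid (not just Gaussian) $S$ under only a finite fourth moment assumption, with a decay rate fast enough to beat the polynomial-in-$n$ blow-up of the resolvent on the bad event. This is where one needs an anti-concentration / least-singular-value input; everything else is resolvent bookkeeping. A secondary, more technical point is ensuring the operator-norm bounds on $\|S^\top S\|_{op}$ and $\|X\|_{op}$ used to control $\|B\|_{op}$ hold on an event whose complement is also negligible against the polynomial factors — but this is standard since $S^\top S$ is a sample covariance-type matrix with bounded spectral edge under a fourth-moment condition.
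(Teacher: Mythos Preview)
Your resolvent-identity decomposition is the right first move and matches the paper exactly: with $A=X^\top S^\top SX$ one expands $(A-z_nI)^{-2}-A^{-2}$ and picks up an explicit factor of $z_n$. The divergence from the paper begins immediately after that, and your route is both harder and contains a gap.

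The paper never splits into good and bad events. It uses two facts already available in the surrounding proof: (i) the singular values of $X$ are \emph{deterministically} bounded, $c^{-1}\le\sigma_{\min}(X)\le\sigma_{\max}(X)\le c$, and (ii) by the Bai--Yin law (valid under a finite fourth moment), $\lambda_{\min}(S^\top S)\to(1-\sqrt{\xi})^2>0$ and $\lambda_{\max}(S^\top S)\to(1+\sqrt{\xi})^2$ almost surely. Combining these, $\lambda_{\min}(A)\ge c^{-2}(1-\sqrt{\xi})^2$ almost surely for all large $n$, so every inverse and resolvent appearing in the difference is bounded in operator norm by a \emph{fixed constant}, not by $n^c$. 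Hence $|f_N(s)-f_\infty(s)|\le C(c,\xi)\,|z_n|\to0$ almost surely, and bounded convergence finishes. No quantitative small-ball or Rudelson--Vershynin input is needed; the least-singular-value concern you flag as the ``main obstacle'' simply does not arise.

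Your step (iv) also has a genuine gap. On the bad event $G_n^c=\{\lambda_{\min}(A)<n^{-c}\}$ you propose to bound $\EE{|f_N-f_\infty|\mathbf{1}_{G_n^c}}$ by $\mathrm{poly}(n)\cdot\mathbb{P}(G_n^c)$, but $f_\infty$ contains $A^{-2}$ and is \emph{not} deterministically bounded by any polynomial in $n$ on $G_n^c$: when $\lambda_{\min}(A)$ is close to zero, $f_\infty$ can be arbitrarily large. So the product bound does not follow, and you would need either moment control on $f_\infty$ (which is delicate for general iid $S$) or a further truncation. The paper's argument sidesteps this entirely because the ``bad event'' has probability zero eventually.
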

According to lemma \ref{lemma:f,g,limits} and \ref{lemma:f_infty,g_infty,limits}, we know that 
\begin{align*}
&\limn\frac{1}{p}\EE{\tr[Q_1(S,X)]}=\limn\frac{1}{p}\EE{\tr[Q_1(T,X)]},\\
&\limn\frac{1}{p}\EE{\tr[Q_2(S,X)]}=\limn\frac{1}{p}\EE{\tr[Q_2(T,X)]},
\end{align*}
which concludes the proof of Theorem \ref{theorem:iidS,deterministic X}.

\subsection{Proof of Theorem \ref{gsdx}}
\label{gsdx:pf2} 

For the numerator of $OE$, note that
\begin{align*}
&\EE{x_t^\top V\Lambda^{-1}(U^\top S^\top SU)^{-1}U^\top (S^\top S)^2U(U^\top S^\top SU)^{-1}\Lambda^{-1}V^\top x_t}\\
&=\tr[\EE{(S_1^\top S_1)^{-1}S_1^\top(S_1S_1^\top+S_2S_2^\top)S_1(S_1^\top S_1)^{-1}}\Lambda^{-1}V^\top x_tx_t^\top V\Lambda^{-1}]\\
&=\tr[(I_p+\EE{(S_1^\top S_1)^{-1}S_1^\top(n-p)I_rS_1(S_1^\top S_1)^{-1}})\Lambda^{-1}V^\top x_tx_t^\top V\Lambda^{-1}]\\
&=\tr[(I_p+\frac{n-p}{r-p-1}I_p)\Lambda^{-1}V^\top x_tx_t^\top V\Lambda^{-1}]\\
&=(1+\frac{n-p}{r-p-1})x_t^\top V\Lambda^{-2}V^\top x_t.
\end{align*}
Therefore
\begin{align*}
OE(\hbeta_s,\hbeta)&=\frac{1+(1+\frac{n-p}{r-p-1})x_t^\top (X^\top X)^{-1} x_t}{1+x_t^\top (X^\top X)^{-1} x_t}.
\end{align*}
Additionally, if $x_t=\Sigma^{1/2}z_t$ and $X=Z\Sigma^{1/2}$, we have
$x_t^\top(X^\top X)^{-1}x_t=z_t^\top(Z^\top Z)^{-1}z_t.$ 
Since $z_t$ has iid entries of zero mean and unit variance, we have
\begin{align*}
\EE{z_t^\top(Z^\top Z)^{-1}z_t}=\tr[\EE{(Z^\top Z)^{-1}}\EE{z_tz_t^\top}]=\tr[\EE{(Z^\top Z)^{-1}}]
\end{align*}

Note that the e.s.d. of $\frac{1}{n}Z^\top Z$  converges almost surely to the standard $Mar\breve{c}enko-Pastur$ law \citep{marchenko1967distribution,bai2009spectral} whose Stieltjes transform $m(z)$ satisfies the equation
\begin{align*}
m(z)=\frac{1}{1-\gamma-z-z\gamma m(z)}
\end{align*}
for $z\notin [(1-\sqrt{\gamma})^2,(1+\sqrt{\gamma})^2]$. Letting $z=0$, we have $m(0)=1/(1-\gamma)$, thus
\begin{align*}
\tr[(\frac{1}{n}ZZ^\top)^{-1}]\xrightarrow{a.s.}\frac{1}{1-\gamma},\quad
\tr[(\frac{1}{p}ZZ^\top)^{-1}]\xrightarrow{a.s.}\frac{\gamma}{1-\gamma}.
\end{align*}
Therefore $\EE{x_t^\top(X^\top X)^{-1}x_t}\xrightarrow{a.s.}\frac{\gamma}{1-\gamma}$
and almost surely
\begin{align*}
OE(\hbeta_s, \hbeta)&\rightarrow\frac{1+(1+\frac{1-\gamma}{\xi-\gamma})\frac{\gamma}{1-\gamma}}{1+\frac{\gamma}{1-\gamma}}
=\frac{\xi-\gamma^2}{\xi-\gamma}\text{, as }n\rightarrow\infty.
\end{align*}

Similarly for the numerator of $PE$, we have
\begin{align*}
\EE{\tr((U^\top S^\top SU)^{-2}U^\top (S^\top S)^2U)}&=\EE{\tr((S_1^\top S_1)^{-2}S_1^\top (S_1S_1^\top +S_2S_2^\top )S_1)}\\
&=\EE{\tr(I_p+(S_1^\top S_1)^{-2}S_1^\top S_2S_2^\top S_1)}\\
&=p+\tr(\EE{(S_1^\top S_1)^{-2}S_1^\top (n-p)I_rS_1})\\
&=p+(n-p)\tr(\EE{(S_1^\top S_1)^{-1}})\\
&=p+\frac{(n-p)p}{r-p-1},
\end{align*}
therefore
\begin{align*}
PE(\hbeta_s,\hbeta)&=\frac{p+\frac{(n-p)p}{r-p-1}}{p}=1+\frac{n-p}{r-p-1}.
\end{align*}

This finishes the proof.

\subsection{Proof of Theorem \ref{theorem:iidS,deterministic X}}
\label{theorem:iidS,deterministic X:pf2}

\subsubsection{Proof of Lemma \ref{lemma:f,g,limits}}
\label{lemma:f,g,limits:pf}

The proof of this lemma relies on the Lindeberg Principle, similar to the Generalized Lindeberg Principle, Theorem 1.1 of \cite{chatterjee2006generalization}. The first claim shows universality assuming bounded third derivatives. 

\begin{lemma}[Universality theorem]
Suppose $s$ and $t$ are two independent random vectors in $\R^N$ with independent entries, satisfying $\EE{s_{i}}=\EE{t_i}$ and $\EE{s_i^2}=\EE{t_i^2}$ for all $1\leq i\leq N$, and $\EE{|s_i|^3+|t_i|^3}\leq M<\infty$. Suppose $f_N\in C^3(\R^N,\R)$ and $|\frac{\partial^3f_N}{\partial s_i^3}|$ is bounded above by $L_N$ for all $1\leq i\leq N$ and almost surely as $N$ goes to infinity,
then 
\begin{align*}
&|\EE{f_N(s)-f_N(t)}|= O(L_NN),\text{ as }N\rightarrow\infty.
\end{align*}
\label{lemma:universality chatterjee}
\end{lemma}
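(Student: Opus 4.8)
The plan is to prove Lemma~\ref{lemma:universality chatterjee} by a telescoping/hybrid argument that swaps the coordinates of $s$ for those of $t$ one at a time, exactly in the spirit of Chatterjee's generalized Lindeberg principle \cite{chatterjee2006generalization}. Concretely, I would define the interpolating vectors $w^{(k)} = (t_1,\dots,t_{k-1}, s_k, s_{k+1},\dots,s_N)$ for $k=1,\dots,N+1$, so that $w^{(1)}=s$ and $w^{(N+1)}=t$, together with the ``leave-one-out'' vector $w^{(k)}_0 = (t_1,\dots,t_{k-1},0,s_{k+1},\dots,s_N)$ which agrees with both $w^{(k)}$ and $w^{(k+1)}$ except that its $k$-th coordinate is zero. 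Then $\EE{f_N(s)-f_N(t)} = \sum_{k=1}^N \EE{f_N(w^{(k)}) - f_N(w^{(k+1)})}$, and each summand is controlled by comparing both $f_N(w^{(k)})$ and $f_N(w^{(k+1)})$ to $f_N(w^{(k)}_0)$.

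The key step is a third-order Taylor expansion in the single coordinate $x_k$ around $0$: for $w(x_k)$ denoting $w^{(k)}_0$ with its $k$-th entry set to $x_k$,
\begin{align*}
f_N(w(x_k)) = f_N(w^{(k)}_0) + x_k \,\partial_k f_N(w^{(k)}_0) + \tfrac12 x_k^2\, \partial_k^2 f_N(w^{(k)}_0) + R_k(x_k),
\end{align*}
where $|R_k(x_k)| \le \tfrac16 |x_k|^3 \sup_{u}|\partial_k^3 f_N(u)| \le \tfrac16 L_N |x_k|^3$ by the assumed bound on the third derivative. Now I substitute $x_k = s_k$ and $x_k = t_k$ and take expectations. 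Since $w^{(k)}_0$ is independent of both $s_k$ and $t_k$ (the former because $s$ has independent entries, the latter because $s$ and $t$ are independent with $t$ having independent entries), the constant, first-order, and second-order terms have expectations $\EE{f_N(w^{(k)}_0)}$, $\EE{s_k}\EE{\partial_k f_N(w^{(k)}_0)}$, and $\tfrac12\EE{s_k^2}\EE{\partial_k^2 f_N(w^{(k)}_0)}$ respectively, and the matching moment hypotheses $\EE{s_k}=\EE{t_k}$, $\EE{s_k^2}=\EE{t_k^2}$ make all three cancel when we subtract the $t_k$ version. What remains is $|\EE{f_N(w^{(k)}) - f_N(w^{(k+1)})}| \le \tfrac16 L_N \EE{|s_k|^3 + |t_k|^3} \le \tfrac16 L_N M$. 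Summing over $k=1,\dots,N$ gives $|\EE{f_N(s)-f_N(t)}| \le \tfrac16 M L_N N = O(L_N N)$, as claimed.

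One technical wrinkle is that the hypothesis only asserts the third-derivative bound $L_N$ holds ``almost surely as $N\to\infty$'' rather than deterministically for every $N$; in applying it I would either absorb a vanishing-probability exceptional event into the $O(\cdot)$ (using that $f_N$ and its derivatives have at worst polynomial growth, which in the intended application follows from the resolvent identity and the $z_n$-regularization keeping everything bounded), or simply state the conclusion for all large $N$, which is all that is needed downstream. The main obstacle, and the part requiring genuine care, is not this telescoping argument itself—which is standard—but rather verifying in the later application (Lemma~\ref{lemma:f,g,limits}) that $f_N$ and $g_N$ as defined in \eqref{def_f}--\eqref{def_g} actually are $C^3$ with third partial derivatives bounded by an $L_N$ small enough that $L_N N \to 0$; that estimate is where the specific structure of the resolvents $(X^\top S^\top S X - z_n I_p)^{-1}$ and the choice $z_n = i/n$ enter, and it is deferred to the proof of Lemma~\ref{lemma:f,g,limits}.
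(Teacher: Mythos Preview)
Your proposal is correct and essentially identical to the paper's proof: both use Chatterjee's Lindeberg swapping with hybrid vectors, a third-order Taylor expansion at zero in the swapped coordinate, and the matching first two moments to cancel the lower-order terms, arriving at the same bound $\tfrac16 M L_N N$. The only cosmetic difference is the direction of interpolation (you place the $t$'s on the left, the paper places the $s$'s on the left), and your remark about the ``almost surely as $N\to\infty$'' wrinkle is handled in the paper by an appeal to bounded convergence.
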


The lemma below shows that the third derivatives are actually bounded for our functions of interest, and that the $L_N$ are of order $N^{-3/2}$. 

Since we know the singular values of $X$ are uniformly bounded away from zero and infinity, there exists a constant $c>0$, such that 
\begin{align*}
\frac{1}{c}\leq\sigma_{\min}(X)\leq\sigma_{\max}(X)\leq c.
\end{align*}

\begin{lemma}[Bounding the third derivatives]
Let $f_N(s)$ and $g_N(s)$ be defined in \eqref{def_f} and \eqref{def_g}, where the entries of $s$ are independent, of zero mean, unit variance and finite fourth moment. Then there exists some constant $\phi=\phi(c,\xi,\gamma)>0$, such that  for any partial derivative $\partial_\alpha=\frac{\partial}{\partial_{ij}}$, $\forall i\in[r],j\in[n]$,
\begin{align*}
&|\partial_\alpha^3 f_N|\leq \phi N^{-5/4},\quad
|\partial_\alpha^3 g_N|\leq \phi N^{-5/4}
\end{align*}
hold almost surely as $n$ goes to infinity.
\label{lemma:bound_third_derivatives}
\end{lemma}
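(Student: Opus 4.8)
The plan is to establish the estimate for $f_N$ in detail; $g_N$ is handled identically, the only change being one extra bounded factor $X^\top X$ sitting inside the trace. Write $M=M(s)=X^\top S^\top SX$, $R=R(s)=(M-z_nI_p)^{-1}$ and $B=B(s)=X^\top(S^\top S)^2X$, so that $f_N=\frac1p\tr[R^2B]$ and $g_N=\frac1p\tr[R(X^\top X)RB]$. The argument splits into (a) deterministic operator-norm bounds valid on a fixed almost sure event, and (b) an exact Leibniz expansion of $\partial_\alpha^3$ whose every summand is seen to be $\frac1p\,n^{-3/2}$ times the trace of a product of $O(1)$ bounded-norm matrices, one of which has bounded rank.

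For (a): $\|X\|_{op}\le c$ by hypothesis. Writing $S=n^{-1/2}\widetilde S$ with $\widetilde s_{ij}$ iid of mean $0$, variance $1$, finite fourth moment, the Bai--Yin theorem (see \cite{bai2009spectral}) gives $\|S\|_{op}=n^{-1/2}\|\widetilde S\|_{op}\le C_1$ almost surely for all large $n$, hence $\|S^\top S\|_{op}\le C_1^2$ and $\|B\|_{op}=O(1)$. The decisive bound is on the resolvent: since the sketched design is overdetermined ($r>p$, i.e.\ $\xi>1$, while $p/n\to\gamma<1$), the smallest singular value of $SX$ stays bounded away from $0$ — $\sigma_{\min}(X^\top S^\top SX)\ge\kappa=\kappa(c,\xi,\gamma)>0$ almost surely for all large $n$ — which, together with $M\succeq0$ and $\mathrm{Im}(z_n)>0$, forces $\|R\|_{op}\le\kappa^{-1}$. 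I expect this last bound to be the main obstacle, for two reasons: it rules out the only cheap estimate $\|R\|_{op}\le1/\mathrm{Im}(z_n)=n$, which would be hopelessly weak here (each resolvent factor in the expansion below would cost a factor of $n$); and for a non-Gaussian sketch with only four finite moments it is a genuine non-asymptotic statement about the smallest singular value of the structured matrix $SX=n^{-1/2}\widetilde S U\Lambda V^\top$ — whose rows become iid isotropic $p$-vectors once projected onto $\mathrm{col}(X)$ — rather than the more standard Marchenko--Pastur bulk behaviour. For the Gaussian sketch $T$ this is already implicit in the Wishart computations used in the proof of Theorem~\ref{gsdx}.

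For (b): one differentiates via the resolvent identity $\partial_\alpha R=-R(\partial_\alpha M)R$. A direct computation gives $\partial_\alpha M=n^{-1/2}(uv^\top+vu^\top)$ with $u=X^\top e_j$ (a row of $X$, $\|u\|\le c$) and $v=X^\top S^\top e_i$ (a row of $SX$, $\|v\|\le cC_1$); moreover $\partial_\alpha u=0$ and $\partial_\alpha v=n^{-1/2}u$, so $\partial_\alpha^2M=2n^{-1}uu^\top$ and $\partial_\alpha^3M=0$. Likewise, since $(S^\top S)^2$ is a degree-four polynomial in the entries of $S$, each $\partial_\alpha^kB$ carries an explicit factor $n^{-k/2}$, has operator norm $O(1)$ on the event of (a), and — for $k\ge1$ — has rank bounded by an absolute constant. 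Expanding $\partial_\alpha^3f_N$ by the Leibniz rule over the two $R$'s and the $B$ in $\frac1p\tr[R\,R\,B]$, and expanding each differentiated $R$ into words in $R,\partial_\alpha M,\partial_\alpha^2M$ through the resolvent identity, one obtains a linear combination — with universal coefficients and a number of summands that does not grow with $n$ — of terms of the form $\mathrm{const}\cdot\frac1p\,n^{-3/2}\tr[W]$, where $W$ is a product of $O(1)$ matrices each of operator norm $O(1)$, at least one of bounded rank. The exponent is always exactly $n^{-3/2}$ because the three differentiations distribute among the $M$- and $B$-factors and each unit of derivative degree releases precisely one factor $n^{-1/2}$; and a bounded-rank factor is always present because, with only three factors $R,R,B$ and three derivatives, at least one factor is differentiated, producing a $\partial_\alpha^jM$ or a $\partial_\alpha^kB$ ($k\ge1$).

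To finish, apply the elementary inequality $|\tr[W_0W_1\cdots W_m]|\le\mathrm{rank}(W_0)\,\|W_0\|_{op}\prod_{\ell\ge1}\|W_\ell\|_{op}$ (valid for any ordering, by cyclicity of the trace together with $\sum_i\sigma_i(W_0)\le\mathrm{rank}(W_0)\|W_0\|_{op}$), taking $W_0$ to be the bounded-rank factor. This makes every summand $O\!\left(\frac1p\,n^{-3/2}\right)=O(n^{-5/2})$ almost surely for all large $n$, with the implied constant depending only on $c,C_1,\kappa$, hence only on $c,\xi,\gamma$. Since $r/p\to\xi$ and $p/n\to\gamma$ give $N=rn\asymp n^2$, we have $n^{-5/2}\asymp N^{-5/4}$, which is the claimed bound $|\partial_\alpha^3f_N|\le\phi N^{-5/4}$; the computation for $g_N$ is the same, with $X^\top X$ contributing one further factor of operator norm $O(1)$.
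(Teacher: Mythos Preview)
Your approach is essentially the paper's: differentiate, extract one factor $n^{-1/2}$ per derivative, and bound the remaining trace by exploiting that each $\partial_\alpha^kM$, $\partial_\alpha^kB$ has bounded rank (the paper phrases this last step via the eigenvalue sum of $Q(S)=E_{ji}S+S^\top E_{ij}$ and Lemma~\ref{lemma:bound trace(AB)}, you via the rank inequality, which is equivalent and arguably cleaner). One small slip: $r>p$ means $\xi>\gamma$, not $\xi>1$; and you are right to flag $\sigma_{\min}(SX)\ge\kappa$ as the delicate point --- the paper's written bound $\lambda_p\ge\lambda_{\min}(X^\top X)\lambda_{\min}(S^\top S)$ is in fact vacuous when $r<n$ (then $S^\top S$ is rank-deficient), and what is really needed is precisely the lower bound on $\sigma_{\min}(SU)$ for the $r\times p$ matrix $SU$ that you describe.
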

The above two lemmas conclude the proof of Lemma \ref{lemma:f,g,limits}. Next we prove them in turn.

\begin{proof}(Proof of Lemma \ref{lemma:universality chatterjee})
The main idea of this proof is borrowed from the proof of Theorem 1.1 of \cite{chatterjee2006generalization}. For each fixed $N$, We write
\begin{align*}
s=(s_1,\ldots,s_N),\quad t=(t_1,\ldots,t_N).
\end{align*}
For each $i=0,1,\ldots,N$, define
\begin{align*}
z_{i}=(s_1,\ldots,s_{i-1},s_i,t_{i+1},\ldots,t_N),\\
z_{i}^0=(s_1,\ldots,s_{i-1},0,t_{i+1},\ldots,t_N).
\end{align*}
Note that $z_0=t,z_N=s$. By a Taylor expansion, we have almost surely that
\begin{align*}
&|f_N(z_i)-f_N(z_i^0)-\partial_if_N(z_i^0)s_i-\frac{1}{2}\partial_i^2f_N(z_i^0)s_i^2|\leq\frac{1}{6}L_N|s_i|^3,\\
&|f_N(z_{i-1})-f_N(z_i^0)-\partial_if_N(z_i^0)t_i-\frac{1}{2}\partial_i^2f_N(z_i^0)t_i^2|\leq\frac{1}{6}L_N|t_i|^3.
\end{align*}
Thus
\begin{align*}
|f_N(z_i)-f_N(z_{i-1})-\partial_if_N(z_i^0)(s_i-t_i)-\frac{1}{2}\partial_i^2f_N(z_i^0)(s_i^2-t_i^2)|\leq\frac{1}{6}(|s_i|^3+|t_i|^3)L_N.
\end{align*}
Since 
\begin{align*}
f_N(s)-f_N(t)=\sum_{i=1}^Nf_N(z_i)-f_N(z_{i-1}),
\end{align*}
we have
\begin{align*}
|f_N(s)-f_N(t)-\sum_{i=1}^N\partial_if_N(z_i^0)(s_i-t_i)-\sum_{i=1}^N\frac{1}{2}\partial_i^2f_N(z_i^0)(s_i^2-t_i^2)|\leq\sum_{i=1}^N\frac{1}{6}(|s_i|^3+|t_i|^3)L_N
\end{align*}
almost surely as $N$ goes to infinity. By the bounded convergence theorem, and because the first two moments of $s,t$ match, we have
\begin{align*}
|\EE{f_N(s)-f_N(t)}|\leq\frac{1}{6}\EE{(|s_i|^3+|t_i|^3)}L_NN,
\end{align*}
thus
\begin{align*}
|\EE{f_N(s)-f_N(t)}|\leq O(L_NN).
\end{align*}
This proves Lemma \ref{lemma:universality chatterjee}.
\end{proof}

\begin{proof}(Proof of Lemma \ref{lemma:bound_third_derivatives})
We will show that the third derivative of $f_N$ and $g_N$ are both bounded in magnitude by $N^{-5/4}$, or equivalently, $n^{-5/2}$. For any $\alpha=(i,j)\in[r]\otimes[n]$, denote $\partial_\alpha=\frac{\partial}{\partial_{ij}}$. Define
$$
G_n(S)=(X^\top S^\top SX-z_nI_p)^{-2}X^\top(S^\top S)^2X,$$
then we have $f_N(s)=\frac{1}{p}\tr(G_n(S))$ and
\begin{align}
(X^\top S^\top SX-z_nI_p)^2G_n(S)=X^\top(S^\top S)^2X.
\label{def_G}
\end{align}
Take derivative w.r.t. $\alpha$ on both sides and we get
\begin{align}
\partial_\alpha[(X^\top S^\top SX-z_nI_p)^2]\cdot G_n(S)+(X^\top S^\top SX-z_nI_p)^2\cdot\partial_\alpha G_n(S)=\partial_\alpha [X^\top(S^\top S)^2X].
\label{derivative_1}
\end{align}
We have
\begin{align*}
\partial_\alpha[(X^\top S^\top SX-z_nI_p)^2]&=\partial_\alpha [(X^\top S^\top SX)^2]-2z_n\partial_\alpha (X^\top S^\top SX)\\
&=\partial_\alpha(X^\top S^\top SX)\cdot(X^\top S^\top SX)
+(X^\top S^\top SX)\cdot\partial_\alpha(X^\top S^\top SX)\\
&-2z_n\partial_\alpha(X^\top S^\top SX),
\end{align*}
and
\begin{align*}
\partial_\alpha(X^\top S^\top SX)&=X^\top[\partial_\alpha (S^\top)\cdot S+S^\top\cdot\partial_\alpha S]X=X^\top(n^{-1/2}E_{ji}S+S^\top n^{-1/2}E_{ij})X,
\end{align*}
where $E_{ij}\in\mathbb{R}^{r\times n}$ whose $(i,j)$-th entry is 1 and the rest are all zeros, and $E_{ji}=E_{ij}^\top$. Therefore
\begin{align*}
\partial_\alpha[(X^\top S^\top SX-z_nI_p)^2]&=[X^\top(E_{ji}S+S^\top E_{ij})XX^\top S^\top SX+X^\top S^\top SXX^\top(E_{ji}S+S^\top E_{ij})X\\
&-2z_nX^\top(E_{ji}S+S^\top E_{ij})X]n^{-1/2}.\numberthis\label{partial_1}
\end{align*}
Similarly, 
\begin{align*}
\partial_\alpha[X^\top(S^\top S)^2X]&=X^\top[\partial_\alpha(S^\top S)\cdot(S^\top S)+(S^\top S)\cdot\partial_\alpha(S^\top S)]X\\
&=\left\{X^\top[(E_{ji}S+S^\top E_{ij})(S^\top S)+(S^\top S)(E_{ji}S+S^\top E_{ij})]X\right\}n^{-1/2}.\numberthis\label{partial_2}
\end{align*}
Denoting $P(S)=X^\top S^\top SX$ and $Q(S)=E_{ji}S+S^\top E_{ij}$, substituting \eqref{partial_1},\eqref{partial_2} into \eqref{derivative_1}, we get
\begin{align*}
\partial_\alpha G(S)&=(P(S)-z_nI_p)^{-2}\{X^\top[Q(S)S^\top S+S^\top SQ(S)]X\\
&-[X^\top Q(S)XP(S)+P(S)X^\top Q(S)X-2z_nX^\top Q(S)X]\}G(S)n^{-1/2}.\numberthis\label{first_order}
\end{align*}
Next we will show that the trace of $\partial_\alpha G(S)$ is bounded by $n^{-1/2}$. By the inequality $\|AB\|\leq\|A\|\|B\|$ and the lemma \ref{lemma:bound trace(AB)} below, we only need to show that the sum of the absolute values of the eigenvalues of $Q(S)$ and the spectral norms of 
\begin{align*}
\quad X^\top X,\quad S^\top S,\quad P(S), \quad (P(S)-z_nI_p)^{-2},\quad G(S)
\end{align*}
are all bounded above by some constants only dependent on $c$ and $\xi$.

\begin{lemma}(Trace of products).
Suppose $A,B$ are two $n\times n$ diagonalizable complex matrices, then 
$$|\tr(AB)|\leq|\lambda|_{\max}(A)\sum_{i=1}^n|\mu_i|,$$ 
where $|\lambda|_{\max}(A)$ is the largest absolute value of eigenvalues of $A$ and $\mu_i$ are the eigenvalues of $B$. 
\label{lemma:bound trace(AB)}
\end{lemma}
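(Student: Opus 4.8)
The plan is to diagonalize $B$ and let the cyclic invariance of the trace convert $\tr(AB)$ into a sum of the eigenvalues of $B$ weighted by the diagonal entries of a fixed similarity transform of $A$. Writing $B = WMW^{-1}$ with $M=\mathrm{diag}(\mu_1,\dots,\mu_n)$ and $W$ invertible, one has
\[
\tr(AB)=\tr\big(AWMW^{-1}\big)=\tr\big((W^{-1}AW)M\big)=\sum_{i=1}^n\mu_i\,(W^{-1}AW)_{ii},
\]
so that
\[
|\tr(AB)|\le\Big(\max_{1\le i\le n}\big|(W^{-1}AW)_{ii}\big|\Big)\sum_{i=1}^n|\mu_i|.
\]
Since $W^{-1}AW$ is similar to $A$ its eigenvalues are $\lambda_1,\dots,\lambda_n$, and the lemma follows once one shows that its diagonal entries are bounded in modulus by $|\lambda|_{\max}(A)$.

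That last bound is the step I expect to be the crux, because a diagonal entry of a generic similarity transform of $A$ need not be controlled by the spectral radius when $A$ is highly non-normal. The favorable situation — and the only one needed when the lemma is invoked in the proof of Lemma \ref{lemma:bound_third_derivatives}, where the role of $B$ is played by the real symmetric matrix $Q(S)=E_{ji}S+S^\top E_{ij}$ — is that $W$ can be taken orthogonal (unitary). Then $(W^{-1}AW)_{ii}=w_i^*Aw_i$ for an orthonormal eigenbasis $\{w_i\}$ of $B$, and $|w_i^*Aw_i|\le\|A\|_{op}$; combining this with $|\lambda|_{\max}(A)\le\|A\|_{op}$ (the spectral radius never exceeds the operator norm) and the fact that in the applications $\|A\|_{op}$ is in any case the quantity one actually estimates, via $\|A_1\cdots A_k\|_{op}\le\prod_j\|A_j\|_{op}$, this delivers the inequality in the form it is used.

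A variant that yields the statement literally, with $|\lambda|_{\max}(A)$ in place of $\|A\|_{op}$, diagonalizes both matrices: writing $A=VD_AV^{-1}$, $B=WMW^{-1}$ and $P=V^{-1}W$, one gets $\tr(AB)=\sum_{i,j}\lambda_i\mu_j\,\omega_{ij}$ with $\omega_{ij}=P_{ij}(P^{-1})_{ji}$, whose row sums and column sums all equal $1$ since $PP^{-1}=P^{-1}P=I$. When $P$ is unitary — in particular when $A$ and $B$ are both normal — one has $|\omega_{ij}|=|P_{ij}|^2$, so $(|\omega_{ij}|)_{i,j}$ is doubly stochastic and $|\tr(AB)|\le|\lambda|_{\max}(A)\sum_{j}|\mu_j|$ follows immediately. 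Either way the argument reduces to an elementary Rayleigh-quotient or doubly-stochastic estimate with essentially no computation once the normalization is fixed; the only genuine subtlety is the non-normality issue flagged above, which is why in practice the bound is carried through the operator norm.
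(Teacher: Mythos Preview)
Your approach is essentially the paper's: it writes $A=QD_\lambda Q^\top$, $B=PD_\mu P^\top$, sets $v_i$ to be the (orthonormal) columns of $Q^\top P$, and bounds $|\tr(AB)|=\bigl|\sum_i\mu_i\,v_i^\top D_\lambda v_i\bigr|\le|\lambda|_{\max}(A)\sum_i|\mu_i|$ via the Rayleigh quotient, exactly your $(W^{-1}AW)_{ii}$ computation with $W$ orthogonal. Your caveat that the argument tacitly needs orthogonal diagonalizability (normality) applies equally to the paper's proof, which uses transposes rather than inverses in the eigendecompositions; this is harmless because every invocation in the paper has the relevant matrices Hermitian.
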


Note that
\begin{align*}
Q(S)=E_{ji}S+S^\top E_{ij}&=e_{j}S_{i\cdot}+S_{i\cdot}^\top e_{j}^\top,
\end{align*}
where $e_j$ is an $n\times1$ vector with the $j$th entry equal to 1 and the rest equal to 0, $S_{i\cdot}$ is the $i$th row of $S$. The eigenvalues of $Q(S)$ are $S_{ij}\pm\|S_{i\cdot}\|$, according to Lemma \ref{lemma:uv^top+vu^top} below.
\begin{lemma}(Rank two matrices.)
Let $u,v\in\mathbb{R}^n$ and $u^\top v\neq0$, then the nonzero eigenvalues of $uv^\top+vu^\top$ are $u^\top v\pm\|u\|\|v\|$, both with multiplicity 1.
\label{lemma:uv^top+vu^top}
\end{lemma}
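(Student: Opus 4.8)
The plan is to exploit that $M := uv^\top + vu^\top$ is symmetric of rank at most two, so that the only possible nonzero eigenvalues of $M$ are the nonzero eigenvalues of its restriction to the two-dimensional subspace $W=\mathrm{span}\{u,v\}$, while the remaining $n-2$ eigenvalues are $0$. Indeed, $Mx = u(v^\top x)+v(u^\top x)\in W$ for every $x$, so any eigenvector with nonzero eigenvalue must lie in $W$; and $M$ is symmetric, hence diagonalizable with real spectrum. Note that the hypothesis $u^\top v\neq0$ forces both $u\neq0$ and $v\neq0$; I will carry out the computation in the case where $u,v$ are linearly independent, which is the only case relevant to the application (there $u=e_j$ and $v=S_{i\cdot}^\top$, which are linearly dependent only on a null event), and remark on the degenerate case at the end.

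First I would compute the action of $M$ on the basis $\{u,v\}$ of $W$: one has $Mu=(u^\top v)\,u+\|u\|^2\,v$ and $Mv=\|v\|^2\,u+(u^\top v)\,v$. Hence in this basis $M|_W$ is represented by the $2\times2$ matrix
\[
\begin{pmatrix} u^\top v & \|v\|^2 \\ \|u\|^2 & u^\top v \end{pmatrix},
\]
whose trace is $2\,u^\top v$ and whose determinant is $(u^\top v)^2-\|u\|^2\|v\|^2$. Its characteristic polynomial is therefore $\mu^2-2(u^\top v)\mu+(u^\top v)^2-\|u\|^2\|v\|^2$, and its two roots are $u^\top v\pm\|u\|\,\|v\|$.

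Finally I would pin down the multiplicities. By the strict Cauchy--Schwarz inequality for linearly independent $u,v$, the determinant $(u^\top v)^2-\|u\|^2\|v\|^2$ is strictly negative, so $M|_W$ has no zero eigenvalue and the two roots $u^\top v\pm\|u\|\|v\|$ are distinct; thus each is a nonzero eigenvalue of $M$ of multiplicity exactly one, and together with the $n-2$ zeros they exhaust the spectrum. For completeness, in the degenerate case $v=\lambda u$ with $\lambda\neq0$ one has $M=2\lambda\,uu^\top$, whose unique nonzero eigenvalue $2\lambda\|u\|^2$ is precisely whichever of $u^\top v\pm\|u\|\|v\|$ is nonzero (the other equalling $0$), so the formula remains consistent. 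There is no serious obstacle here; the one point requiring care is that it is exactly the hypothesis $u^\top v\neq0$ (which rules out $\|u\|\,\|v\|=0$) that guarantees the two claimed eigenvalues are genuinely distinct and, in the independent case, both nonzero.
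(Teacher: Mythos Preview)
Your proof is correct. The paper takes a slightly different, more direct route: it simply exhibits the eigenvectors explicitly, verifying by one-line computation that
\[
(uv^\top+vu^\top)\Bigl(\frac{u}{\|u\|}\pm\frac{v}{\|v\|}\Bigr)=(u^\top v\pm\|u\|\,\|v\|)\Bigl(\frac{u}{\|u\|}\pm\frac{v}{\|v\|}\Bigr),
\]
and asserts rank~$2$. Your approach---restricting $M$ to $W=\mathrm{span}\{u,v\}$, writing the $2\times2$ matrix in the basis $\{u,v\}$, and reading the eigenvalues off the characteristic polynomial---is slightly longer but more systematic, and in fact handles the multiplicity-one claim more carefully via strict Cauchy--Schwarz. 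Your discussion of the degenerate case $v=\lambda u$ is also more complete: there the rank drops to one and one of the paper's two ``eigenvectors'' is actually the zero vector, so the paper's argument tacitly assumes linear independence. The two approaches meet in the middle, since the eigenvectors $(1,\pm\|u\|/\|v\|)^\top$ of your $2\times2$ matrix, expressed back in $\{u,v\}$, are exactly scalar multiples of the paper's $\frac{u}{\|u\|}\pm\frac{v}{\|v\|}$.
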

First note that $|S_{ij}|\leq\sigma_{\max}(S)$ and $\|S_{i\cdot}\|\xrightarrow{a.s.}1$ by the law of large number. 
It is also known that as $n\rightarrow\infty$ and $r/n\rightarrow\xi$, we have
\begin{align*}
\lambda_{\min}(S^\top S)\xrightarrow{a.s.}{}(1-\sqrt{\xi})^2,\quad
\lambda_{\max}(S^\top S)\xrightarrow{a.s.}{}(1+\sqrt{\xi})^2,
\end{align*}
see \cite{bai2009spectral}. So the sum of the absolute values of the eigenvalues of $Q(S)$ is bounded above by $2(2+\sqrt{\xi})$, almost surely as $n$ tends to infinity. 

By our assumption, the eigenvalues of $X^\top X$ are bounded in the interval $[\frac{1}{c^2},c^2]$.

Suppose the eigenvalues of $X^\top S^\top SX$ are $\lambda_1\geq\ldots\geq\lambda_p$. So almost surely, 
\begin{align*}
\lambda_p\geq\lambda_{\min}(X^\top X)\lambda_{\min}(S^\top S)\geq\frac{1}{c^2}(1-\sqrt{\xi})^2,\\
\lambda_1\leq\lambda_{\max}(X^\top X)\lambda_{\max}(S^\top S)\leq c^2(1+\sqrt{\xi})^2,
\end{align*}
Since the complex matrix $X^\top S^\top SX-z_nI_p$ is diagonalizable, and its eigenvalues are $\lambda_1-z_n, \ldots, \lambda_p-z_n$. Thus the eigenvalues of $(X^\top S^\top SX-z_nI_p)^{-2}$ are $\frac{1}{(\lambda_1-z_n)^2},\ldots,\frac{1}{(\lambda_p-z_n)^2}$. Because $\lambda_i\in\R$, $z_n=i/n$ and $|\lambda_i-z_n|>|\lambda_i|$, the largest absolute eigenvalue of $(X^\top S^\top SX-z_nI_p)^{-2}$ is bounded above by $\frac{1}{\lambda_p^2}$, that is, $\|(P(S)-z_nI_p)^{-2}\|\leq\frac{1}{\lambda_p^2}\leq \frac{c^4}{(1-\sqrt{\xi})^4}$.

We also have
\begin{align*}
\|G(S)\|&\leq\|(P(S)-z_nI_p)^{-2}\|\|X^\top(S^\top S)^2X\|\\
&\leq\frac{c^4}{(1-\sqrt{\xi})^4}c^2(1+\sqrt{\xi})^4=c^6\frac{(1+\sqrt{\xi})^4}{(1-\sqrt{\xi})^4}.
\end{align*}
Thus $\tr[\partial_\alpha G(S)]$ is bounded by $O(n^{-1/2})$. Since $p/n\rightarrow\gamma$, there exists a constant $\phi_1(c,\gamma,\xi)$, such that 
$$|f_N|=\frac{1}{p}|\tr[\partial_\alpha G(S)]|\leq\phi_1(c,\gamma,\xi)n^{-3/2}.$$

Next we will bound the second derivative of $f_N$ from above by $n^{-2}$.
Take the second derivative w.r.t. to $\alpha$ on both sides of \eqref{def_G}, we have
\begin{align*}
\partial_\alpha^2[(X^\top S^\top SX-z_nI_p)^2]\cdot G(S)+2\partial_\alpha[(X^\top S^\top SX-z_nI_p)^2]\cdot\partial_\alpha G(S)+&(X^\top S^\top SX-z_nI_p)^2\partial_\alpha^2G(S)\\
&=\partial_\alpha^2 [X^\top(S^\top S)^2X],\numberthis\label{derivative_2}
\end{align*}
and thus
\begin{align*}
\partial_\alpha^2G(S)&=(X^\top S^\top SX-z_nI_p)^{-2}[\partial_\alpha^2 [X^\top(S^\top S)^2X]-\partial_\alpha^2[(X^\top S^\top SX-z_nI_p)^2]\cdot G(S)-\\
&2\partial_\alpha[(X^\top S^\top SX-z_nI_p)^2]\cdot\partial_\alpha G(S)].\numberthis\label{second_order}
\end{align*}

Using \eqref{partial_1}, we have
\begin{align*}
\partial_\alpha^2[(X^\top S^\top SX-z_nI_p)^2]&=\partial_\alpha[X^\top(E_{ji}S+S^\top E_{ij})XX^\top S^\top SX+X^\top S^\top SXX^\top(E_{ji}S+S^\top E_{ij})X\\
&-2z_nX^\top(E_{ji}S+S^\top E_{ij})X]n^{-1/2}\\
&=\{X^\top(E_{ji}E_{ij}+E_{ji}E_{ij})XX^\top S^\top SX+\\
&X^\top(E_{ji}S+S^\top E_{ij})XX^\top(E_{ji}S+S^\top E_{ij})X+\\
&X^\top(E_{ji}S+S^\top E_{ij})XX^\top(E_{ji}S+S^\top E_{ij})X+\\
&X^\top S^\top SXX^\top(E_{ji}E_{ij}+E_{ji}E_{ij})X-\\
&2z_nX^\top(E_{ji}E_{ij}+E_{ji}E_{ij})X\}\frac{1}{n}\\
&=\{2(X^\top(E_{ji}S+S^\top E_{ij})X)^2\\
&+2X^\top E_{jj}XX^\top S^\top SX+2X^\top S^\top SXX^\top E_{jj}X-4z_nX^\top E_{jj}X\}\frac{1}{n}.
\end{align*}

Using \eqref{partial_2}, we have
\begin{align*}
\partial_\alpha^2[X^\top(S^\top S)^2X]&=\partial_\alpha[
\left\{X^\top[(E_{ji}S+S^\top E_{ij})(S^\top S)+(S^\top S)(E_{ji}S+S^\top E_{ij})]X\right\}]n^{-1/2}\\
&=X^\top [2E_{jj}S^\top S+2(E_{ji}S+S^\top E_{ij})^2+2S^\top SE_{jj}]X\frac{1}{n}.
\end{align*}
By the same arguments, we can show that the traces of the three terms on the right hand side of \eqref{second_order} are bounded above by $n^{-1}$ in magnitude, therefore the second derivative of $f_N$ is bounded by $n^{-2}$. Also by the same reasoning, we can show that there exists some constant $\phi_3(c,\xi,\gamma)$, such that  $|\partial_\alpha^3f_N(s)|\leq\phi_3(c,\xi,\gamma) N^{-5/4}$,  
holds almost surely as $n$ goes to infinity.

We then use similar methods to bound the third derivative of $g_N(s)$.
Define
\begin{align*}
H_n(S)=(X^\top S^\top SX-z_nI_p)^{-1}X^\top X(X^\top S^\top SX-z_nI_p)^{-1}X^\top(S^\top S)^2X,
\end{align*}
then 
$$g_N(s)=\frac{1}{p}\tr[H_n(s)].$$
Note also that
\begin{align*}
(X^\top S^\top SX-z_nI_p)(X^\top X)^{-1}(X^\top S^\top SX-z_nI_p)H_n(S)=X^\top(S^\top S)^2X.
\end{align*}
Taking derivative w.r.t. to $\alpha$ on both sides we have
\begin{align*}
&n^{-1/2}[X^\top(E_{ji}S+S^\top E_{ij})X(X^\top X)^{-1}(X^\top S^\top SX-z_nI_p)H_n(S)+\\
&(X^\top S^\top SX-z_nI_p)(X^\top X)^{-1}X^\top(E_{ji}S+S^\top E_{ij})XH_n(S)]+\\
&(X^\top S^\top SX-z_nI_p)(X^\top X)^{-1}(X^\top S^\top SX-z_nI_p)\partial_\alpha H_n(S)\\
&=n^{-1/2}[X^\top(E_{ji}S+S^\top E_{ij})S^\top SX+X^\top S^\top S(E_{ji}S+S^\top E_{ij})X].
\end{align*}

Using similar techniques, we can show that almost surely $\frac{1}{p}|\tr[\partial_\alpha H_n(S)]|$ is bounded in magnitude by $n^{-3/2}$, $\frac{1}{p}|\tr[\partial_\alpha^2 H_n(S)]|$ is bounded in magnitude by $n^{-2}$, and $\frac{1}{p}|\tr[\partial_\alpha^3 H_n(S)]|$ is bounded in magnitude by $ n^{-5/2}$. Therefore almost surely
$|\partial_\alpha^3g_N(s)|\leq\phi'_3 N^{-5/4},$
for some $\phi'_3=\phi'_3(c,\xi,\gamma)$. Take $\phi=\max(\phi_3,\phi'_3)$, and the proof of Lemma \ref{lemma:bound_third_derivatives} is done.

\end{proof}

\begin{proof} (Proof of Lemma \ref{lemma:bound trace(AB)})
Consider the eigendecompositions of $A,B$,
\begin{align*}
A=Q\left(\begin{array}{ccc}
\lambda_1 &&\\ & \ddots &\\&&\lambda_n\end{array}\right)Q^\top,\,B=P\left(\begin{array}{ccc}
\mu_1 &&\\ & \ddots &\\&&\mu_n\end{array}\right)P^\top,
\end{align*}
then
\begin{align*}
\tr(AB)&=\tr(Q\left(\begin{array}{ccc}
\lambda_1 &&\\ & \ddots &\\&&\lambda_n\end{array}\right)Q^\top P\left(\begin{array}{ccc}
\mu_1 &&\\ & \ddots &\\&&\mu_n\end{array}\right)P^\top).
\end{align*}
Denote the $n$ columns of $Q^\top P$ as $v_1,\ldots,v_n$, which are orthonormal. Then
\begin{align*}
|\tr(AB)|&=|\tr(\left(\begin{array}{ccc}
\lambda_1 &&\\ & \ddots &\\&&\lambda_n\end{array}\right)\sum_{i=1}^n\mu_iv_iv_i^\top)|\\
&=|\sum_{i=1}^n\mu_iv_i^\top\left(\begin{array}{ccc}
\lambda_1 &&\\ & \ddots &\\&&\lambda_n\end{array}\right)v_i|
\leq\sum_{i=1}^n|\mu_i||\lambda|_{\max}(A).
\end{align*}
This finishes the proof.
\end{proof}

\begin{proof} (Proof of Lemma \ref{lemma:uv^top+vu^top})
It is easy to see that $uv^\top+vu^\top$ has rank 2 and
\begin{align*}
(uv^\top+vu^\top)(\frac{u}{\|u\|}+\frac{v}{\|v\|})&=(u^\top v+\|u\|\|v\|)(\frac{u}{\|u\|}+\frac{v}{\|v\|}),\\
(uv^\top+vu^\top)(\frac{u}{\|u\|}-\frac{v}{\|v\|})&=(u^\top v-\|u\|\|v\|)(\frac{u}{\|u\|}-\frac{v}{\|v\|}).
\end{align*}
This finishes the proof.
\end{proof}

\subsubsection{Proof of Lemma \ref{lemma:f_infty,g_infty,limits}}
\label{lemma:f_infty,g_infty,limits:pf}

Let $A=X^\top S^\top SX$ and $B=X^\top S^\top SX-z_nI_n$, and note that we have the relationship
$$A^{-2}-B^{-2}=B^{-1}(B-A)A^{-2}+B^{-2}(B-A)A^{-1}=-z_n(B^{-1}A^{-2}+B^{-2}A^{-1}).$$
Thus
\begin{align*}
f_N(s)-f_\infty(t)&=\frac{1}{p}\tr[(A^{-2}-B^{-2})X^\top(S^\top S)^{2}X]\\
&=-z_n\frac{1}{p}\tr[(B^{-1}A^{-2}+B^{-2}A^{-1})X^\top(S^\top S)^{2}X].
\end{align*}

If the eigenvalues of $A$ are $\lambda_1\geq\ldots\geq\lambda_p>0$, then the eigenvalues of $B$ are $\lambda_1-z_n,\ldots,\lambda_p-z_n$. By Lemma \ref{lemma:bound trace(AB)}, we have
\begin{align*}
\frac{1}{p}|\tr[B^{-1}A^{-2}X^\top(S^\top S)^2X]|&\leq\|A^{-2}X^\top(S^\top S)^2X\|\frac{1}{p}\sum_{i=1}^p\frac{1}{|\lambda_i-z_n|}\\
&\leq\frac{1}{\lambda_p^2}\|X^\top X\|\|S^\top S\|^2\frac{1}{\lambda_p}.
\end{align*}
Recall that $\lambda_p\geq\frac{1}{c^2}(1-\sqrt{\xi})^2$, then we have
\begin{align*}
\frac{1}{p}|\tr[B^{-1}A^{-2}X^\top(S^\top S)^2X]|\leq c^8\frac{(1+\sqrt{\xi})^4}{(1-\sqrt{\xi})^6}.
\end{align*}
By the same argument, we have
\begin{align*}
\frac{1}{p}|\tr[B^{-2}A^{-1}X^\top(S^\top S)^2X]|\leq c^8\frac{(1+\sqrt{\xi})^4}{(1-\sqrt{\xi})^6}.
\end{align*}
Hence
\begin{align*}
|f_N(s)-f_\infty(s)|\leq\frac{1}{p}2c^8\frac{(1+\sqrt{\xi})^4}{(1-\sqrt{\xi})^6}
\end{align*}
holds almost surely. Hence, $ f_N(s)-f_\infty(s)\xrightarrow{a.s.}{}0.$
By the bounded convergence theorem, we have
$\limn|\EE{f_N(s)}-\EE{f_\infty(s)}|=0.$ The other three limit statements can be proved similarly. This finishes the proof.

\subsection{Proof of Theorem \ref{theorem:haar S}}
\label{theorem:haar S:pf}

Suppose that $X$ has the SVD factorization $X=U\Lambda V^\top $ and let $S_1=SU$. The majority of the proof will deal with the following quantities:
\begin{align*}
\tr[(X^\top X)^{-1}]&=\tr(\Lambda^{-2}),\\
\tr[(X^\top S^\top SX)^{-1}]&=\tr[(\Lambda U^\top S^\top SU\Lambda)^{-1}]=\tr[(\Lambda S_1^\top S_1\Lambda)^{-1}],\\
\tr[(X^\top S^\top SX)^{-1}X^\top X]&=\tr[(U^\top S^\top SU)^{-1}]=\tr[(S_1^\top S_1)^{-1}].
\end{align*}
Since we are finding the limits of these quantities, we add the subscript $n$ to matrices like $S_n,U_n$ from now on. Since both $S_n$ and $U_n$ are rectangular orthogonal matrices, we embed them into full orthogonal matrices as
$$\mathbb{S}_n=\left(\begin{array}{c}S_n \\ S_n^\perp\end{array}
\right),\,\mathbb{U}_n=\left(\begin{array}{c}U_n \\ U_n^\perp\end{array}
\right).
$$
Suppose $\frac{1}{p}\Lambda_n S_{1,n}^\top S_{1,n}\Lambda_n$ has an l.s.d. bounded away from zero. Then, the limit of $\frac{1}{p}\tr[(\frac{1}{p}\Lambda_n S_{1,n}^\top S_{1,n}\Lambda_n)^{-1}]$ must equal to the Stieltjes transform of its l.s.d. evaluated at zero. Therefore, we first find the Stieltjes transforms of the l.s.d. of the matrices $\frac{1}{p}\Lambda_n S_{1,n}^\top S_{1,n}\Lambda_n$. The same applies to $\tr[(S_{1,n}^\top S_{1,n})^{-1}]$, except that we replace $\Lambda_n$ with the identity matrix. 

Since $\Lambda_n S_{1,n}^\top S_{1,n}\Lambda_n$ and $S_{1,n}\Lambda_n^{2}S_{1,n}^\top $ have the same non-zero eigenvalues, we first find the l.s.d. of $\frac{1}{n}S_{1,n}\Lambda_n^{2}S_{1,n}^\top $. 
Note that
\begin{align*}
S_{1,n}=S_nU_n&=\left(\begin{array}{cc}I_r & 0\end{array}
\right)
\left(\begin{array}{c}S_n \\ S_n^\perp\end{array}
\right)
\left(\begin{array}{cc} U_n & U_n^\perp\end{array}\right)
\left(\begin{array}{c}I_p \\ 0 \end{array}
\right)\\
&=\left(\begin{array}{cc}I_r & 0\end{array}
\right)\mathbb{S}_n\mathbb{U}_n\left(\begin{array}{c}I_p \\ 0 \end{array}
\right).
\end{align*}
Let $\mathbb{W}_n=\mathbb{S}_n\mathbb{U}_n$, which is again an $n\times n$ Haar-distributed matrix due to the orthogonal invariance of the Haar distribution. Then 
$$S_{1,n}\Lambda_n^2S_{1,n}^\top = \left(\begin{array}{cc}I_r & 0\end{array}
\right)\mathbb{W}_n
\left(\begin{array}{c}I_p \\ 0 \end{array}
\right)\Lambda_n^2
\left(\begin{array}{cc}I_p & 0\end{array}
\right)\mathbb{W}_n^\top \\
\left(\begin{array}{c}I_r \\ 0 \end{array}
\right).
$$
Define
\begin{align}
C_n=\frac{1}{n}\left(\begin{array}{cc}I_r & 0\\0 & 0\end{array}\right)
\mathbb{W}_n
\left(\begin{array}{cc}\Lambda_n^2 & 0\\0&0\end{array}\right)
\mathbb{W}_n^\top 
\left(\begin{array}{cc}I_r & 0\\0 & 0\end{array}\right)=\frac{1}{n}
\left(\begin{array}{cc}S_{1,n}\Lambda_n^2S_{1,n}^\top & 0\\0 & 0\end{array}\right).
\label{def_C_n}
\end{align}
Since $X$ has an l.s.d., we get that the e.s.d. of $\left(\begin{array}{cc} \Lambda_n^2 & 0\\0 & 0\end{array}\right)$ converges to some fixed distribution $F_\Lambda$, and we know that the e.s.d. of $\left(\begin{array}{cc}I_r & 0\\0 & 0\end{array}\right)$ converges to $F_\xi=\xi\delta_1+(1-\xi)\delta_0$. Then according to \cite{Hachem2008anexpression} or Theorem 4.11 of \cite{couillet2011random}, the e.s.d. of $C_n$ converges to a distribution $F_C$, whose $\eta$-transform $\eta_C$ is the unique solution of the following system of equations, defined for all $z\in \mathbb{C}^+$: 
\begin{align*}
\eta_C(z)&=\int\frac{1}{z\gamma(z)t+1}dF_\xi(t)=\frac{\xi}{z\gamma(z)+1}+(1-\xi),\\
\gamma(z)&=\int\frac{t}{\eta_C(z)+z\delta(z)t}dF_\Lambda(t),\\
\delta(z)&=\int\frac{t}{z\gamma(z)t+1}dF_\xi(t)=\frac{\xi}{z\gamma(z)+1}.
\end{align*}
Moreover, we note that if the support of $F_\Lambda$ outside of the point mass at zero is bounded away from the origin, then the same is also true for $F_C$. Indeed, this follows directly from the form of $\Lambda_n S_{1,n}^\top S_{1,n}\Lambda_n$, as its smallest eigenvalue can be bounded below as
$$\lambda_{\min}(\Lambda_n S_{1,n}^\top S_{1,n}\Lambda_n) 
\ge
\lambda_{\min}(\Lambda_n)^2 \lambda_{\min}(S_{1,n}^\top S_{1,n}).
$$
Moreover, by assumption $\lambda_{\min}(\Lambda_n)>c>0$ for some universal constant $c$, and clearly $\lambda_{\min}(S_{1,n}^\top S_{1,n})=1$, as $S_{1,n}$ is a partial orthogonal matrix. This ensures that we can use the Stieltjes transform as a tool to calculate the limiting traces of the inverse.

Returning to our equations, using the first and the third equations to solve for $\delta(z)$ and $\gamma(z)$ in terms of $\eta_C(z)$, substituting them in the second equation, we get the following fixed point equation
\begin{align}
\label{etafixpoint}
\eta_C(z)=\eta_\Lambda(z(1+\frac{\xi-1}{\eta_{C}(z)}).
\end{align}
According to the definition of $\eta$-transform \eqref{def_eta_transform}, for any distribution $F$ with a point mass $f_F(0)$ at zero, we have
\begin{align*}
\eta_F(z)=\int_{t\neq0}\frac{1}{1+zt}dF(t)+f_F(0).
\end{align*}
Note that $f_C(0)=f_\Lambda(0)=1-\gamma$. Since the l.s.d. of $X$ is compactly supported and bounded away from the origin, we know $\inf[supp(f_\Lambda)\cap\R^*]$ and $\inf[supp(f_\Lambda)\cap\R^*]$ are greater than zero, thus $\frac{1}{t}$ is integrable on the set $\{t>0\}$ w.r.t. $F_\Lambda$ and $F_C$. Since $|\frac{z}{1+tz}|<\frac{1}{t}$ when $z>0,t>0$, by the dominated convergence theorem we have
\begin{align*}
\lim_{z\rightarrow\infty}\int_{t\neq0}\frac{z}{1+tz}dF_C(t)&=\int_{t\neq0}\frac{1}{t}dF_C(t),\\
\lim_{z\rightarrow\infty}\int_{t\neq0}\frac{z}{1+tz}dF_\Lambda(t)&=\int_{t\neq0}\frac{1}{t}dF_\Lambda(t),
\end{align*}
and hence
\begin{align*}
\int_{t\neq0}\frac{1}{t}dF_C(t)&=\lim_{z\rightarrow\infty}z(\eta_C(z)-(1-\gamma)),\numberthis\label{equation:lim_z_eta_1}\\
\int_{t\neq0}\frac{1}{t}dF_\Lambda(t)&=\lim_{z\rightarrow\infty}z(\eta_\Lambda(z)-(1-\gamma)),\numberthis\label{equation:lim_z_eta_2}
\end{align*}
and
\begin{align*}
\lim_{z\rightarrow\infty}\eta_C(z)&=\lim_{z\rightarrow\infty}\int_{t\neq0}\frac{1}{1+zt}dF_C(t)+(1-\gamma)\\
&=\int_{t\neq0}\lim_{z\rightarrow\infty}\frac{1}{1+zt}dF_C(t)+(1-\gamma)\\
&=1-\gamma.\numberthis\label{equation:lim_z_eta_3}
\end{align*}

Subtracting $1-\gamma$ from both sides of \eqref{etafixpoint}, multiplying by $z(1+\frac{\xi-1}{\eta_C(z)})$, letting $z\rightarrow\infty$, we obtain
\begin{align*}
\lim_{z\rightarrow\infty}z(1+\frac{\xi-1}{\eta_C(z)})[\eta_C(z)-(1-\gamma)]&=\lim_{z\rightarrow\infty}z(1+\frac{\xi-1}{\eta_C(z)})[\eta_\Lambda(z(1+\frac{\xi-1}{\eta_C(z)}))-(1-\gamma)].
\end{align*}
Note that RHS equals $\int_{t\neq0}\frac{1}{t}dF_\Lambda(t)$ by \eqref{equation:lim_z_eta_2}, and
\begin{align*}
LHS&=\lim_{z\rightarrow\infty}z(1+\frac{\xi-1}{\eta_C(z)})[\eta_C(z)-(1-\gamma)]\\
&=\lim_{z\rightarrow\infty}z[\eta_C(z)-(1-\gamma)](1+\frac{\xi-1}{1-\gamma})\\
&=\int_{t\neq0}\frac{1}{t}dF_C(t)\frac{\xi-\gamma}{1-\gamma},
\end{align*}
where the second and the third equations follow from \eqref{equation:lim_z_eta_3} and \eqref{equation:lim_z_eta_2}.
This shows that
\begin{align*}
\int_{t\neq0}\frac{1}{t}dF_\Lambda(t)=\frac{\xi-\gamma}{1-\gamma}\int_{t\neq0}\frac{1}{t}dF_C(t),
\end{align*}
therefore we have proved that as $n\rightarrow\infty$,
\begin{align*}
\frac{\tr[(\Lambda S_1^\top S_1^\top \Lambda)^{-1}]}{\tr(\Lambda^{-2})}\rightarrow
\frac{\int_{t\neq0}\frac{1}{t}dF_C(t)}{\int_{t\neq0}\frac{1}{t}dF_\Lambda(t),}=
\frac{1-\gamma}{\xi-\gamma},
\end{align*}
thus
$$\limn VE(\hbeta_s,\hbeta)=\frac{1-\gamma}{\xi-\gamma}.$$
This finishes the evaluation of $VE$.

Next, to evaluate of $PE$, we argue as follows: 
In the definition of $C_n$ in \eqref{def_C_n}, replace $\Lambda_n$ by the identity matrix. Since the results do not depend the l.s.d. of $\Lambda_n$, it follows directly that
\begin{align*}
PE
=\frac{\tr[(X^\top S^\top SX)^{-1}X^\top X]}{p}
=\frac{\tr[(S_1^\top S_1)^{-1}]}{\tr(I_p)}
\rightarrow\frac{1-\gamma}{\xi-\gamma}.
\end{align*}
Next, to evaluate the limit of $OE$, we use the additional assumption on $X$, that is, $X=Z\Sigma^{1/2}$, where $Z$ has iid entries of zero mean, unit variance and finite fourth moment.

Note that (with convergence below always meaning almost sure convergence)
\begin{align*}
\EE{x_t^\top(X^\top X)^{-1}x_t)}\rightarrow\frac{\gamma}{1-\gamma},
\end{align*}
which has been proved in Section \ref{gsdx:pf}, and 
$$1+\EE{x_t^\top   (X^\top    X)^{-1}x_t}\rightarrow1+\frac{\gamma}{1-\gamma}=\frac{1}{1-\gamma}.$$
On the other hand,
\begin{align*}
&\EE{x_t^\top    (X^\top    S^\top    SX)^{-1}x_t}=\tr(\EE{X^\top    S^\top    SX}^{-1}\EE{x_tx_t^\top    })\\
&=\tr(\EE{(\Sigma^{1/2}Z^\top    S^\top    SZ\Sigma^{1/2})^{-1}}\Sigma)
=\tr(\EE{Z^\top    S^\top    SZ}^{-1}).
\end{align*}
Define $C_n=\frac{1}{n}Z^\top    S^\top    SZ$, then the e.s.d. of $C_n$ converges to a distribution $F_C$, whose Stieltjes transform $m(z)=m_C(z),z\in\mathbb{C}^+$ is given by \citep{bai2009spectral}
$$m(z)=\frac{1}{\int\frac{s}{1+\gamma se}dF_{S^\top    S}(s)-z}=\frac{1}{\frac{\xi}{1+\gamma e}-z},$$
where 
$$e=\frac{1}{\int\frac{s}{1+\gamma se}dF_{S^\top    S}(s)-z}=\frac{1}{\frac{\xi}{1+\gamma e}-z}.$$
And here $F_{S^\top S}$ is the l.s.d. of $S^\top S$, which is $\xi\delta_1+(1-\xi)\delta_0$.
Solving these equations gives
$$m(z)=e(z)=\frac{\xi-\gamma-z+\sqrt{(\xi-\gamma-z)^2-4z\gamma}}{2z\gamma}.$$
Therefore 
\begin{align*}
\lim_{z\rightarrow0}m(z)&=\frac{-1-\frac{2(\gamma-\xi)-4\gamma}{2(\xi-\gamma)}}{2\gamma}
=\frac{-1+\frac{\xi+\gamma}{\xi-\gamma}}{2\gamma}
=\frac{1}{\xi-\gamma}.
\end{align*}
Thus
\begin{align*}
\tr((Z^\top    S^\top    SZ)^{-1})=\frac{1}{n}\tr((\frac{1}{n}Z^\top    S^\top    SZ)^{-1})\xrightarrow{a.s.}\gamma m_C(0)=\frac{\gamma}{\xi-\gamma}.
\end{align*}
Therefore
$$1+ \EE{x_t^\top    (X^\top    S^\top    SX)^{-1} x_t}\rightarrow1+\frac{\gamma}{\xi-\gamma}=\frac{1}{1-\gamma/\xi},$$
and we have proved
$$\limn OE(\hbeta_s,\hbeta)
=\limn\frac{1+ \EE{x_t^\top    (X^\top    S^\top    SX)^{-1} x_t}}{1+ \EE{x_t^\top   (X^\top    X)^{-1}x_t}}=\frac{1-\gamma}{1-\gamma/\xi}.$$


This finishes the proof.
\subsubsection{Checking the free multiplicative convolution property}
\label{proof:check multiplicative conv}
Recall that the $S$-transform of a distribution $F$ is defined as the solution to the equation
\begin{equation*}
m_F(\frac{z+1}{zS(z)})=-zS(z).
\end{equation*}

For more references, see for instance \cite{voiculescu1992free,hiai2006semicircle,nica2006lectures,anderson2010introduction}.

Since $m(\frac{z+1}{zS(z)})=-zS(z),\eta(z)=\frac{1}{z}m(-\frac{1}{z})$, we have
\begin{align*}
-zS(z)=m(\frac{z+1}{zS(z)})=-\frac{zS(z)}{z+1}\eta(-\frac{zS(z)}{z+1}),
\end{align*}
where $S(z)$ is the $S$-transform. Therefore
\begin{align*}
\eta_\Lambda(-\frac{zS_\Lambda(z)}{z+1})=z+1, \,
\eta_C(-\frac{zS_C(z)}{z+1})=z+1.
\end{align*}
Let $x=-\frac{z}{z+1}S_C(z)$, then $\eta_C(x)=z+1$ and \eqref{etafixpoint} gives
\begin{align*}
z+1&=\eta_C(x)
=\eta_\Lambda(x(1+\frac{\xi-1}{\eta_C(x)}))
=\eta_\Lambda(-\frac{z}{z+1}S_C(z)(1+\frac{\xi-1}{z+1}))\\
&=\eta_\Lambda(-\frac{z}{z+1}S_C(z)\frac{z+\xi}{z+1})
=\eta_\Lambda(-\frac{z}{z+1}S_\Lambda(z)).
\end{align*}
Therefore
$S_\Lambda=\frac{z+\xi}{z+1}S_C(z),$
and equivalently
$S_C(z)=S_\Lambda(z)\frac{z+1}{z+\xi}.$
Let $S_0(z)=\frac{z+1}{z+\xi}$ be the $S$-transform of some distribution $F_0$, then the corresponding Stieltjes transform is $m_0(z)=\frac{\xi}{1-z}+\frac{1-\xi}{-z}$, which is the Stieltjes transform for $F_0=\xi\delta_1+(1-\xi)\delta_0$. This shows that $F_C$ is a freely multiplicative convolution of $F_\Lambda$ and $\xi\delta_1+(1-\xi)\delta_0$. 

\subsection{Proof of Theorem  \ref{theorem:Hadamard S}}
\label{theorem:Hadamard S:pf}

Note that $B,H$ and $D$ are all symmetric matrices satisfying
\begin{align*}
B^2=B,\,H^2=I_n,\,D^2=I_n,
\end{align*}
and $P$ is also an orthogonal matrix,
therefore
\begin{align*}
S^\top S&=P^\top DHBHDP\\
(S^\top S)^2&=P^\top DHBHDPP^\top DHBHDP\\
&=P^\top DHBHDP=S^\top S.
\end{align*}
By Proposition \ref{proposition:finite n calculation for orthogonal S}, we only need to find
\begin{align*}
\tr[(X^\top S^\top SX)^{-1}]
=\tr[(X^\top P^\top DHBHDPX)^{-1}],\numberthis\label{equation:hadamard trace 1}
\end{align*}
and 
\begin{align*}
\tr[(X^\top S^\top SX)^{-1}X^\top X]
=\tr[(X^\top P^\top DHBHDPX)^{-1}X^\top X].\numberthis\label{equation:hadamard trace 2}
\end{align*}
We first have the following observation.
\begin{lemma} For a uniformly distributed permutation matrix $P$, diagonal matrix $B$ with iid diagonal entries of distribution $\mu_B=\frac{r}{n}\delta_1+(1-\frac{r}{n})\delta_0$, diagonal matrix $D$ with iid sign random variables, equal to $\pm1$ with probability one half, and Hadamard matrix $H$, we have the following equation in distribution
$$X^\top (P^\top DH)B(HDP)X\stackrel{d}{=}X^\top (P^\top DHDP)B(P^\top DHDP)X.$$
\label{lemma:Hadamard equal in distribution}
\end{lemma}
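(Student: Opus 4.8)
The plan is to reduce the claimed equality in distribution to two elementary observations: (i) $D$ is a diagonal matrix with $D^2=I_n$, so $D$ commutes with every diagonal matrix and $DCD=C$ whenever $C$ is diagonal, which lets me rewrite the right-hand side into exactly the functional shape of the left-hand side but with $B$ replaced by $PBP^\top$; and (ii) the diagonal entries of $B$ are iid (hence exchangeable) and independent of $(P,D)$, so that conjugating $B$ by the random permutation $P$ leaves the relevant \emph{joint} law unchanged.

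First I would carry out a purely algebraic simplification of the right-hand side. Grouping the matrix product around its center and using that $PBP^\top$ is diagonal together with $DPBP^\top D=D(PBP^\top)D=PBP^\top$, one obtains
\[
(P^\top DHDP)\,B\,(P^\top DHDP)=P^\top DH\,\big(DPBP^\top D\big)\,HDP=P^\top DH\,(PBP^\top)\,HDP,
\]
so that $X^\top(P^\top DHDP)B(P^\top DHDP)X=X^\top(P^\top DH)(PBP^\top)(HDP)X$. Denoting by $\Phi(p,d,b):=X^\top(p^\top dH)\,b\,(Hdp)X$ the deterministic (measurable) map appearing on the left-hand side, the left-hand side of the lemma is $\Phi(P,D,B)$ and the right-hand side is $\Phi(P,D,PBP^\top)$. (Observe that no property of the Hadamard matrix $H$ is used in this reduction; only $D^2=I_n$ and the diagonality of $B$ matter.)

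It then remains to prove $(P,D,PBP^\top)\stackrel{d}{=}(P,D,B)$, since applying $\Phi$ to both triples yields the lemma. For each fixed permutation matrix $p$, the conjugate $pBp^\top$ has the same distribution as $B$, as it merely permutes the iid diagonal entries; moreover $B$ is independent of $(P,D)$. Hence, conditionally on $(P,D)=(p,d)$, the matrix $PBP^\top$ has the law of $B$ regardless of $(p,d)$, which is to say $PBP^\top$ is independent of $(P,D)$ and distributed as $B$ — the desired equality of joint laws. The one point requiring care is precisely that this \emph{joint} statement is needed, not merely the marginal $PBP^\top\stackrel{d}{=}B$: the factors $P^\top DH$ and $HDP$ flanking $B$ also depend on $P$, so $B$ cannot be decoupled from them, and this is exactly why both the independence of $B$ from $(P,D)$ and the exchangeability of its entries are invoked rather than a self-symmetry of $B$ alone.
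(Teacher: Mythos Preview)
Your proof is correct and follows essentially the same route as the paper: both reduce the right-hand side algebraically to $X^\top(P^\top DH)(PBP^\top)(HDP)X$ by using that $PBP^\top$ is diagonal and $D^2=I_n$, and then invoke $PBP^\top\stackrel{d}{=}B$. Your write-up is in fact slightly more careful than the paper's, since you explicitly argue the \emph{joint} equality $(P,D,PBP^\top)\stackrel{d}{=}(P,D,B)$ rather than relying on the marginal statement alone; this is exactly the right point to flag, given that the flanking factors also depend on $P$.
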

This is true, because we are simply permuting the diagonal matrix of iid Bernoullis in the middle term; but see the end of this section for a formal proof. We call $DP$ the signed-permutation matrix and $W=P^\top DHDP$ the bi-signed-permutation Hadamard matrix. Thus by equations \eqref{equation:hadamard trace 1}, \eqref{equation:hadamard trace 2}, and Lemma \ref{lemma:Hadamard equal in distribution},
\begin{align*}
\EE{\tr[(X^\top S^\top SX)^{-1}]}&=\EE{\tr[(X^\top (P^\top DHDP)B(P^\top DHDP)X)^{-1}]}\\
&=\EE{\tr[(X^\top WBWX)^{-1}]},\\
\EE{\tr[(X^\top S^\top SX)^{-1}X^\top X]}&=\EE{\tr[(X^\top (P^\top DHDP)B(P^\top DHDP)X)^{-1}X^\top X]}\\
&=\EE{\tr[(X^\top WBWX)^{-1}X^\top X]}.
\end{align*}
Since $X^\top WBWX$ has the same nonzero eigenvalues as $BWXX^\top WB$, we first find the l.s.d. of 
\begin{align*}
C_n=\frac{1}{n}B_nW_nX_nX_n^\top W_nB_n.
\end{align*}
The following lemma states the asymptotic freeness regarding Hadamard matrix, which will be used to find the l.s.d. of $C_n$. For more references on free probability, see for instance \cite{voiculescu1992free,hiai2006semicircle,nica2006lectures,anderson2010introduction}.
\begin{lemma}(Freeness of bi-signed-permutation Hadamard matrix) 
Let $X_n,B_n,W_n$ be defined above, that is, $X_n$ is an $n\times n$ deterministic matrix with uniformly bounded spectral norm and has l.s.d. $\mu_X$, $B_n$ is a diagonal matrix with iid diagonal entries, and $W_n$ is a bi-signed-permutation matrix. Then
\begin{align*}
\{B_n,\frac{1}{n}W_nX_nX_n^\top W_n\}
\end{align*}
are asymptotically free in the limit of the non-commutative probability spaces of random matrices, as described in Section \ref{sec:rmt}. The law of
\begin{align*}
C_n=\frac{1}{n}B_nW_nX_nX_n^\top W_nB_n
\end{align*}
converges to the freely multiplicative convolution of $\mu_B$ and $\mu_X$, that is, $C_n$ has l.s.d. $\mu_C=\mu_B\boxtimes\mu_X$.
\label{lemma:freeness of hadamard matrix}
\end{lemma}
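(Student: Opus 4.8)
The plan is to obtain the l.s.d.\ of $C_n$ as a consequence of asymptotic freeness, and to deduce that freeness from the fact that the bi-signed-permutation Hadamard matrix $W_n=P_n^\top D_nH_nD_nP_n=Q_n^\top H_nQ_n$ (with $Q_n=D_nP_n$ the signed permutation) acts as an \emph{asymptotically liberating} sequence of orthogonal matrices. First I would record the structural facts used throughout: with the normalization in force ($H_n^2=I_n$), $W_n$ is a \emph{symmetric} orthogonal matrix with $W_n^2=I_n$, and it is independent of $B_n$; moreover $B_n$ has $0/1$ entries, hence is an orthogonal projection with $B_n^{1/2}=B_n=B_n^2$. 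Set $A_n=\frac{1}{n}X_nX_n^\top$, whose l.s.d.\ is $\mu_X$, and $Y_n=W_nA_nW_n$, which has the same eigenvalues as $A_n$ and so also l.s.d.\ $\mu_X$. Then $C_n=B_nY_nB_n=B_n^{1/2}Y_nB_n^{1/2}$, so once we know that $B_n$ and $Y_n$ are asymptotically free (the core claim), the statement that $C_n$ has l.s.d.\ $\mu_B\boxtimes\mu_X$ follows directly from the definition of freely multiplicative convolution --- which is well posed here because $\mu_B$ is supported on $\{0,1\}$, $\mu_X$ on a compact subset of $[0,\infty)$, and $\|A_n\|$ is uniformly bounded.

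To prove that $B_n$ and $Y_n$ are asymptotically free I would first simplify the mixed moments. Using $W_n^2=I_n$, every polynomial of $Y_n=W_nA_nW_n$ equals $W_n\,q(A_n)\,W_n$, i.e.\ a $W_n$-conjugate of a deterministic matrix of uniformly bounded norm, while every polynomial of $B_n$ is again a diagonal matrix with i.i.d.\ bounded entries whose empirical moments concentrate on $\int t^k\,d\mu_B$. Hence verifying that all alternating centered mixed moments vanish reduces to controlling normalized traces of the form $\frac{1}{n}\tr\!\big(\prod_i(\tilde B^{(i)}-\tau(\tilde B^{(i)}))\,W_nA^{(i)}W_n\big)$ with $A^{(i)}$ deterministic and $\tilde B^{(i)}$ i.i.d.\ diagonal; by cyclicity of the trace this is the same as asking that the deterministic matrix $A_n$ be asymptotically free from $W_nB_nW_n$. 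This holds as soon as $(W_n)$ is asymptotically liberating, i.e.\ $\big|\frac{1}{n}\mathbb{E}\tr(W_nR^{(1)}W_nR^{(2)}\cdots W_nR^{(k)})\big|=O(n^{-2})$ for all deterministic $R^{(j)}$ with $\|R^{(j)}\|\le1$ and $\tr R^{(j)}=0$, together with a matching variance bound. Granting this, the standard consequence that liberating sequences induce almost sure asymptotic freeness between any independent families having limiting $*$-distributions --- see the theory of asymptotically liberating sequences, Anderson and Farrell --- applies to $\{B_n\}$ and $\{W_nA_nW_n\}$ and completes the argument.

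The main obstacle is therefore the liberating estimate for $W_n=Q_n^\top H_nQ_n$, which I would prove by the moment/graph-sum method: expand $\tr(W_nR^{(1)}\cdots W_nR^{(k)})$ over index tuples and integrate out first the signs $D_n$ and then the permutation $P_n$. Independence of the entries of $D_n$ forces the $2k$ Hadamard factors to occur in index-sharing pairs (tuples with an unpaired Hadamard row or column average to zero over $D_n$), while the flatness $|(H_n)_{ij}|=n^{-1/2}$ makes every surviving pair contribute exactly $n^{-1}$ with no fluctuation; averaging over the uniform permutation $P_n$ then decouples the diagonal/position structure and produces the $\mu_B$-moments up to $O(n^{-1})$ corrections from sampling without replacement. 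A count of which index identifications are compatible with a closed trace cycle shows the only order-$n^0$ contributions are the non-crossing ones dictated by freeness, so the centered mixed moments are $O(n^{-1})$; a second-moment computation upgrades this to almost sure convergence. This step is delicate precisely because $D_nH_nD_n$ is less random than a Haar orthogonal matrix --- its sign pattern $d_id_j$ is rank one rather than i.i.d.\ --- so one genuinely needs the two-sided randomization $Q_n^\top H_nQ_n$ together with the flatness of $H_n$; alternatively one may simply invoke the Anderson--Farrell classification, which already lists randomized Hadamard and signed-permutation conjugations among asymptotically liberating sequences, short-circuiting this computation. Everything else --- the reduction of mixed moments to a single trace, and the passage from asymptotic freeness to $\mu_B\boxtimes\mu_X$ --- is routine.
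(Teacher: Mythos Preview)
Your proposal is correct and takes essentially the same route as the paper: the paper's entire proof is the sentence ``This follows directly from Corollaries 3.5, 3.7 of \cite{anderson2014asymptotically},'' i.e., the Anderson--Farrell asymptotic liberation result you invoke at the end of your write-up. Your additional material---the reduction of mixed moments via $W_n^2=I_n$ and cyclicity, and the sketch of the graph-sum/moment computation exploiting the sign averaging over $D_n$, the flatness $|H_{ij}|=n^{-1/2}$, and the permutation average---is a reasonable outline of what underlies that citation, but the paper does not carry any of it out and simply defers to Anderson--Farrell (and to \cite{tulino2010capacity} for the Fourier case).
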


This follows directly from Corollaries 3.5, 3.7 of \cite{anderson2014asymptotically}. See also Lemma 1 of \cite{tulino2010capacity} for earlier results on the Fourier transform.

We use $\mu_B$ and $\mu_X$ to denote the elements in the limiting non-commutative probability space, their laws, and their corresponding probability distributions interchangeably. Since $\mu_B=\xi\delta_1+(1-\xi)\delta_0$, we have $S_{\mu_B}=\frac{z+1}{z+\xi}$. From the aymptotic freeness, it follows that the $S$-transform of $\mu_C$ is the product of that of $\mu_B,\mu_X$, so that
$$S_{\mu_C}(z)=S_{\mu_X}(z)S_{\mu_B}(z)=S_{\mu_X}(z)\frac{z+1}{z+\xi}.$$
We will now simplify this relation. First, note that by the definition of the S-transform, we have 
$$\eta_{\mu_C}(-\frac{z}{z+1}S_{\mu_C}(z))=z+1.$$
Letting $y=-\frac{z}{z+1}S_{\mu_C}$, we have $\eta_{\mu_C}(y)=z+1$. In addition, we can simplify the original relation as
\begin{align*}
S_{\mu_X}&=\frac{z+\xi}{z+1}S_{\mu_C}(z)=-\frac{z+\xi}{z}y,\\
z+1&=\eta_{\mu_X}(-\frac{z}{z+1}S_{\mu_X}(z))
=\eta_{\mu_X}(\frac{z+\xi}{z+1}y)\\
&=\eta_{\mu_X}((1+\frac{\xi-1}{z+1})y)
=\eta_{\mu_X}((1+\frac{\xi-1}{\eta_{\mu_C}(y)})y)
=\eta_{\mu_C}(z).
\end{align*}
So we have obtained
\begin{align*}
\eta_{\mu_X}((1+\frac{\xi-1}{\eta_{\mu_C}(y)})y)=\eta_{\mu_C}(y).
\end{align*}
This is the same equation as what we obtained in \eqref{etafixpoint} in the proof of Haar projection. Therefore as $n\rightarrow\infty$, we have as required
\begin{align*}
\limn VE(\hbeta_s,\hbeta)=\frac{1-\gamma}{\xi-\gamma}.
\end{align*}
Next we consider
$$\EE{\tr[(X^\top WBWX)^{-1}X^\top X]}.$$
Since $X$ has the SVD $X=U\Lambda V^\top$, we have
$$\EE{\tr[(X^\top WBWX)^{-1}X^\top X]}=\EE{\tr[(U^\top WBWU)^{-1}]}.$$
Thus we can repeat the above reasoning, except that we replace $X$ by $U$. Since the result does not depend on $X$, we have
\begin{align*}
\limn PE(\hbeta_s,\hbeta)&=\limn\frac{\EE{\tr[(X^\top S^\top SX)^{-1}X^\top X]}}{p}\\
&=\limn\frac{\EE{\tr[(U^\top WBWU)^{-1}]}}{\tr[U^\top U]}\\
&=\limn VE(\hbeta_s,\hbeta)
=\frac{1-\gamma}{\xi-\gamma}.
\end{align*}

For $OE$, since $S$ satisfies $(S^\top S)^2=S^\top S$ and the e.s.d. of $S^\top S$ converges to $\xi\delta_1+(1-\xi)\delta_0$, the same reasoning as in Theorem \ref{theorem:haar S} also holds in this case for Hadamard projection. This finishes the proof. 



\begin{proof}(Proof of Lemma \ref{lemma:Hadamard equal in distribution})
Note that both $B$ and $D$ are diagonal matrices whose diagonal entries are iid random variables, and $P$ is a permutation matrix. Define $\tilde{B}=P BP^\top$ and $\tilde{D}=P^\top DP$, then
we have
$$\tilde{B}\stackrel{d}{=}B,\quad\tilde{D}\stackrel{d}{=}D$$
and
\begin{align}
DP=P\tilde{D},\quad P^\top D=\tilde{D}P^\top.
\label{equation:DP=PD}
\end{align}
Hence
\begin{align*}
X^\top P^\top DHDPBP^\top DHDPX&=X^\top P^\top DHP\tilde{D}B\tilde{D}P^\top HDPX\\
&=X^\top P^\top DHPB\tilde{D}^2P^\top HDPX\\
&=X^\top P^\top DHPBP^\top HDPX\\
&=X^\top P^\top DH\tilde{B} HDPX\\
&\stackrel{d}{=}X^\top P^\top DHB HDPX,
\end{align*}
where the first equation follows from \eqref{equation:DP=PD}, the second equation holds because $\tilde{D}$ and $B$ are diagonal entries so they commute, while the third equation holds because $\tilde{D}^2=I_n$.
\end{proof}

\subsection{Proof of Theorem \ref{theorem:uniform sampling}}
\label{theorem:uniform sampling:pf}

We can take 
$$S=\left(\begin{array}{ccc}s_1 & 0 & 0\\
0 & \ddots & 0\\
0 & 0 & s_n\end{array}
\right),$$
which is an $n\times n$ diagonal matrix and $s_i$-s are iid random variables with $\PP{s_i=1}=\frac{r}{n}$ and $\PP{s_i=0}=1-\frac{r}{n}$. Since $s_i^2=s_i$, we have $S^2=S$, hence
\begin{align*}
VE(\hbeta_s,\hbeta)&=\frac{\EE{\tr[(X^\top SX)^{-1}]}}{\tr[(X^\top X)^{-1}]}\\
PE(\hbeta_s,\hbeta)&=\frac{\EE{\tr[(X^\top SX)^{-1}X^\top X]}}{p}.
\end{align*}
Since $X$ is unitarily invariant and $S$ is a diagonal matrix independent from $X$, $\{S,X,X^\top\}$ are almost surely asymptotically free in the non-commutative probability space by Theorem 4.3.11 of \cite{hiai2006semicircle}. Since the law of $S$ converges to $\mu_S=\xi\delta_1+(1-\xi)\delta_0$, the law of $X$ converges to $\mu_X$, thus the law of $SXX^\top S$ converges to the freely multiplicative convolution $\mu_S\boxtimes\mu_X$. The rest of the proof is the same as that in the proof of Theorem \ref{theorem:Hadamard S}.

\subsection{Proof of Theorem \ref{theorem:sampling elliptical model}}
\label{theorem:sampling elliptical model:pf}
Define 
\begin{align*}
S=\left(\begin{array}{ccc}
s_1&&\\&\ddots&\\&&s_n\end{array}
\right),\quad W=\left(\begin{array}{ccc}
w_1&&\\&\ddots&\\&&w_n\end{array}
\right),
\end{align*}
where the $s_i$-s are independent and $s_i|\pi_i\sim Bernoulli(\pi_i)$. $S$ is independent of $Z$ because $\pi_i$ is independent of $z_i$, by the assumption. $W$ has l.s.d. $F_w$. According to Proposition \ref{proposition:finite n calculation}, the values of $VE$, $PE$ are determined by $\tr[(X^\top X)^{-1}]$, $\tr[Q_1(S,X)]=\tr[(X^\top SX)^{-1}]$, and  $\tr[Q_2(S,X)]$.
Note that under the elliptical model $X=WZ\Sigma^{1/2}$, we have
\begin{align*}
\tr[(X^\top X)^{-1}]&=\tr[(\Sigma^{1/2}Z^\top W^2 Z\Sigma^{1/2})^{-1}],\\
\tr[Q_1(S,X)]&=\tr[(\Sigma^{1/2}Z^\top WSW Z\Sigma^{1/2})^{-1}],\\
\tr[Q_2(S,X)]&=\tr[(Z^\top WSWZ)^{-1}Z^\top W^2Z].
\end{align*}

Note that the e.s.d. of $\Sigma$ converges in distribution to some probability distribution $F_\Sigma$, and the e.s.d. of $WSW$ converges in distribution to $F_{sw^2}$, the limiting distribution of $s_iw_i^2$, $i=1,\ldots,n$. Again from the results of \cite{lixin2007spectral} or \cite{paul2009no}, with probability 1, the e.s.d. of $C_n=\frac{1}{n}\Sigma^{1/2}Z^\top WSW Z\Sigma^{1/2}$ converges to a probability distribution function $F_C$, whose Stieltjes transform $m_C(z)$, for $z\in\mathbb{C}^+$ is given by
\begin{align*}
m_C(z)=\int\frac{1}{t\int\frac{u}{1+\gamma e_Cu}dF_{sw^2}(u)-z}dF_\Sigma(t),
\end{align*}
where $e_C=e_C(z)$ is the unique solution in $\mathbb{C}^+$ of the equation
\begin{align*}
e_C=\int\frac{t}{t\int\frac{u}{1+\gamma e_Cu}dF_{sw^2}(u)-z}dF_\Sigma(t).
\end{align*}
Similarly, the e.s.d. of $D_n=\frac{1}{n}\Sigma^{1/2}Z^\top W^2 Z\Sigma^{1/2}$ converges to a probability distribution $F_D$, whose Stieltjes transform $m_D(s)$, for $z\in\mathbb{C}^+$ is given by
\begin{align*}
m_D(z)=\int\frac{1}{t\int\frac{u}{1+\gamma e_Du}dF_{w^2}(u)-z}dF_\Sigma(t),
\end{align*}
where $e_D=e_D(z)$ is the unique solution in $\mathbb{C}^+$ of the equation
\begin{align*}
e_D=\int\frac{t}{t\int\frac{u}{1+\gamma e_Du}dF_{w^2}(u)-z}dF_\Sigma(t).
\end{align*}
Since $F_C$ and $F_D$ have no point mass at the origin, we can set $z=0$ \cite{couillet2014analysis}. Therefore
\begin{align*}
m_C(0)&=\frac{1}{\int\frac{u}{1+\gamma e_C(0)u}dF_{sw^2}(u)}\int\frac{1}{t}dF_\Sigma(t),\quad
e_C(0)=\frac{1}{\int\frac{u}{1+\gamma e_C(0)u}dF_{sw^2}(u)}.
\end{align*}
Note also that
\begin{align*}
e_C(0)=\frac{\gamma e_C(0)}{\int\frac{\gamma e_C(0)u}{1+\gamma e_C(0)u}dF_{sw^2}(u)}
=\frac{\gamma e_C(0)}{1-\eta_{sw^2}(\gamma e_C(0))},
\end{align*}
thus $\eta_{sw^2}(\gamma e_C(0))=1-\gamma$,
and
\begin{align}
m_C(0)=e_C(0)\int\frac{1}{t}dF_\Sigma(t)=\frac{\eta_{sw^2}^{-1}(1-\gamma)}{\gamma}\int\frac{1}{t}dF_\Sigma(t).
\label{m_C(0)}
\end{align}
Similarly,
\begin{align*}
m_D(0)=e_D(0)\int\frac{1}{t}dF_\Sigma(t)=\frac{\eta_{w^2}^{-1}(1-\gamma)}{\gamma}\int\frac{1}{t}dF_\Sigma(t).
\end{align*}
Hence, again by the same argument as we have seen several times before, the traces have limits that can be evaluated in terms of Stieltjes transforms, and we have
\begin{align*}
VE(\hbeta_s, \hbeta)&=\frac{\tr[Q_1(S,X)]}{\tr[(X^\top X)^{-1}]}
=\frac{\tr[(\Sigma^{1/2}Z^\top WSW Z\Sigma^{1/2})^{-1}]}{\tr[(\Sigma^{1/2}Z^\top W^2 Z\Sigma^{1/2})^{-1}]}\\
&\rightarrow\frac{m_C(0)}{m_D(0)}
=\frac{\eta_{sw^2}^{-1}(1-\gamma)}{\eta_{w^2}^{-1}(1-\gamma)},
\end{align*}
and the result for $VE$ follows. 

We then deal with $PE$. Note that
\begin{align*}
PE(\hbeta_s, \hbeta)&=\frac{\EE{\tr[(Z^\top WSWZ)^{-1}Z^\top W^2Z]}}{p}.
\end{align*}
We first assume that $Z$ has iid $\N(0, 1)$ entries. Denote $T_1=WSW$, $T_2=W(I-S)W$. Since $S$ is a diagonal matrix whose diagonal entries are 1 or 0, W is also a diagonal matrix, $T_1$ and $T_2$ are both diagonal matrices and the set of their nonzero entries is complementary. So $Z^\top T_1Z$ and $Z^\top T_2Z$ are independent from each other and $T_1+T_2=W^2$. We have
\begin{align*}
\EE{\tr[(Z^\top WSWZ)^{-1}Z^\top W^2Z]}&=\EE{\tr[(Z^\top T_1Z)^{-1}Z^\top(T_1+T_2)Z]}\\
&=\EE{\tr[I_p+(Z^\top T_1Z)^{-1}Z^\top T_2Z]}\\
&=p+\tr[\EE{(Z^\top T_1Z)^{-1}}\EE{Z^\top T_2Z}].
\end{align*}
Note that
\begin{align*}
\EE{(Z^\top T_2Z)_{ij}}&=\sum_{k=1}^n\EE{z_{ki}T_{2,kk}z_{kj}}
=\sum_{k=1}^nT_{2,kk}\delta_{ij}
\end{align*}
thus
\begin{align*}
\EE{Z^\top T_2Z}&=\EE{\tr(T_2)}I_p,\\
\EE{\tr[(Z^\top WSWZ)^{-1}Z^\top W^2Z]}&=p+\EE{\tr(T_2)}\tr[\EE{(Z^\top T_1Z)^{-1}}].
\end{align*}
Note that $\frac{1}{n}Z^\top WSWZ$ is equal to $C_n$ with $\Sigma$ replaced by the identity. Thus by \eqref{m_C(0)},
\begin{align*}
\frac{1}{p}\tr[(\frac{1}{n}Z^\top WSWZ)^{-1}]]&\xrightarrow{a.s.}\frac{\eta_{sw^2}^{-1}(1-\gamma)}{\gamma},\\
\tr[(Z^\top WSWZ)^{-1}]]&\xrightarrow{a.s.}\eta_{sw^2}^{-1}(1-\gamma),
\end{align*} 
thus
\begin{align*}
\limn PE(\hbeta_s,\hbeta)&=1+\frac{1}{p}\tr(T_2)\eta_{sw^2}^{-1}(1-\gamma)\\
&=1+\frac{1}{\gamma}\EE{w^2(1-s)}\eta_{sw^2}^{-1}(1-\gamma)
\end{align*}
Then we use a similar Lindeberg swapping argument as in Theorem \ref{theorem:haar S} to show extend this to $Z$ with iid entries of zero mean, unit variance and finite fourth moment. This finishes the proof for $PE$. For the last claim, for $OE$, note that
\begin{align*}
\EE{x_t^\top(X^\top X)x_t}&=\EE{w^2}\EE{z_t^\top(Z^\top W^2Z)^{-1}z_t}\\
&=\EE{w^2}\EE{\tr[(Z^\top W^2Z)^{-1}]}\\
&\rightarrow\EE{w^2}\eta_{w^2}^{-1}(1-\gamma),
\end{align*}
and that
\begin{align*}
\EE{x_t^\top(X^\top S^\top S X)x_t}&=\EE{w^2}\EE{z_t^\top(Z^\top WSWZ)^{-1}z_t}\\
&=\EE{w^2}\EE{\tr[(Z^\top WSWZ)^{-1}]}\\
&\rightarrow\EE{w^2}\eta_{sw^2}^{-1}(1-\gamma).
\end{align*}
Thus 
\begin{align*}
\limn OE(\hbeta_s,\hbeta)=\limn\frac{1+\EE{x_t^\top(X^\top S^\top SX)^{-1}x_t}}{1+\EE{x_t^\top(X^\top X)^{-1}x_t}}=\frac{1+\EE{w^2}\eta_{sw^2}^{-1}(1-\gamma)}{1+\EE{w^2}\eta_{w^2}^{-1}(1-\gamma)},
\end{align*}

This finishes the proof.

\subsubsection*{Proof of leverage sampling}
\label{theorem:leverage sampling:pf}
It suffices to show that leverage score sampling that samples the $i$-th row with probability $\min(\frac{r}{p}h_{ii},1)$ is equivalent to sample with probability $\min\left[\frac{r}{p}\left(1-\frac{1}{1+w^2\eta_{w^2}^{-1}(1-\gamma)}\right), 1\right]$. Given that the latter probability is independent from $z_i$, the statement of the corollary will then follow directly from Theorem \ref{theorem:sampling elliptical model}.

To see this equivalence, first note that
\begin{align*}
h_{ii}&=x_i^\top(\sum_{j\neq i}x_jx_j^\top+x_ix_i^\top)^{-1}x_i=x_i^\top(\sum_{j\neq i}x_jx_j^\top)^{-1}x_i-\frac{(x_i^\top(\sum_{j\neq i}x_jx_j^\top)^{-1}x_i)^2}{1+x_i^\top(\sum_{j\neq i}x_jx_j^\top)^{-1}x_i}\\
&=\frac{x_i^\top(\sum_{j\neq i}x_jx_j^\top)^{-1}x_i}{1+x_i^\top(\sum_{j\neq i}x_jx_j^\top)^{-1}x_i}
\end{align*}
and
\begin{align*}
\frac{1}{1-h_{ii}}
&=1+x_i^\top(\sum_{j\neq i}x_jx_j^\top)^{-1}x_i
=1+w_i^2z_i^\top\Sigma^{1/2}(\sum_{j\neq i}x_jx_j^\top)^{-1}\Sigma^{1/2}z_i\\
&=1+w_i^2z_i^\top(\sum_{j\neq i}w_j^2z_jz_j^\top)^{-1}z_i.
\end{align*}
Denote $R=\sum_{j=1}^nw_j^2z_jz_j^\top ,\, R_{(i)}=\sum_{j\neq i}w_j^2z_jz_j^\top,$ so that $\frac{1}{1-h_{ii}}=1+w_i^2z_i^\top R_{(i)}^{-1}z_i.$

Since $z_i$ and $R_{(i)}$ are independent for each $i=1,\ldots,n$, while $z_i$ has iid entries of zero mean and unit variance and bounded moments of sufficienctly high order, then by the concentration of quadratic forms lemma \ref{quad_form} cited below, we have
\begin{align*}
\frac{1}{n}z_i^\top R_{(i)}^{-1}z_i-\frac{1}{n}\tr(R_{(i)}^{-1})\xrightarrow{a.s.}{}0.
\end{align*}
\begin{lemma}[Concentration of quadratic forms, consequence of Lemma B.26 in \citet{bai2009spectral}] Let $x \in \RR^p$ be a random vector with iid entries and $\EE{x} = 0$, for which $\EE{(\sqrt{p}x_i)^2} = \sigma^2$ and $\sup_i \EE{(\sqrt{p}x_i)^{4+\eta}}$ $ < C$ for some $\eta>0$ and $C <\infty$. Moreover, let $A_p$ be a sequence of random $p \times p$ symmetric matrices independent of $x$, with uniformly bounded eigenvalues. Then the quadratic forms $x^\top A_p x $ concentrate around their means at the following rate
$$P(|x^\top A_p x - p^{-1} \sigma^2 \tr A_p|^{2+\eta/2}>C) \le C p^{-(1+\eta/4)}.$$
\label{quad_form}
\end{lemma}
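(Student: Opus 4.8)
The plan is to deduce this concentration bound from the $q$-th moment inequality of \cite[Lemma~B.26]{bai2009spectral} with the choice $q=2+\eta/2$, followed by Markov's inequality, the randomness of $A_p$ being disposed of by conditioning. Since $A_p$ is independent of $x$ and, by hypothesis, has eigenvalues bounded by a deterministic constant $K$, it suffices to establish the bound for an arbitrary fixed symmetric matrix $A$ with $\|A\|_{op}\le K$, with a constant independent of $A$; the general case then follows by taking the conditional expectation given $A_p$ and integrating out.

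For a fixed $A$, I would first rescale: setting $y=\sqrt p\,x/\sigma$, the vector $y$ has iid entries with $\EE{y_i}=0$, $\EE{y_i^2}=1$, and $\sup_i\EE{|y_i|^{4+\eta}}\le C\sigma^{-(4+\eta)}=:C_1$, and one has $\EE{y^\top A y}=\tr A$ together with $x^\top A x - p^{-1}\sigma^2\tr A = (\sigma^2/p)(y^\top A y-\tr A)$. Applying \cite[Lemma~B.26]{bai2009spectral} with exponent $q=2+\eta/2$ --- chosen precisely so that $2q=4+\eta$ is the largest moment we control, noting $\nu_4\le\nu_{4+\eta}^{4/(4+\eta)}<\infty$ by Jensen's inequality --- together with $\tr(A^2)=\sum_i\lambda_i(A)^2\le K^2 p$, yields $\EE{|y^\top A y-\tr A|^{q}}\le C_2\,p^{q/2}$ for a constant $C_2=C_2(q,K,\sigma,\eta,C)$. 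Dividing by $p^q$ and using $q/2=1+\eta/4$ gives $\EE{|x^\top A x-p^{-1}\sigma^2\tr A|^{2+\eta/2}}\le C_3\,p^{-(1+\eta/4)}$, after which Markov's inequality bounds the probability in the statement by $C_3 C^{-1}p^{-(1+\eta/4)}$; enlarging the free constant $C$ so that this is at most $C p^{-(1+\eta/4)}$, and undoing the conditioning of the first step, completes the argument.

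The main substantive step is Lemma~B.26 of \cite{bai2009spectral} itself, which I would invoke as a black box; were one to prove it from scratch, the work would lie in the $q$-th moment bound for the martingale-difference sum obtained by decomposing $y^\top A y-\tr A$ along the coordinates of $y$, via the Burkholder--Rosenthal inequality. Given that lemma, the remaining ingredients --- the rescaling, the choice $q=2+\eta/2$ so the assumed $(4+\eta)$-th moment is exactly what is needed, the bound $\tr(A^2)\le K^2 p$, Markov's inequality, and the conditioning on $A_p$ (immediate since the eigenvalue bound is uniform and deterministic) --- are entirely routine.
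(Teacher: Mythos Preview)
Your proposal is correct and is precisely the derivation the paper has in mind: the lemma is stated in the paper only as a ``consequence of Lemma~B.26 in \cite{bai2009spectral}'' without further details, and your reduction---conditioning on $A_p$, rescaling to unit variance, applying Lemma~B.26 with $q=2+\eta/2$ so that $2q=4+\eta$ matches the available moment, bounding $\tr(A^2)\le K^2 p$, and finishing with Markov's inequality---is exactly the routine unpacking of that citation. There is nothing to add.
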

To use lemma \ref{quad_form}, we only need to guarantee that the smallest eigenvalue of $R_{(i)}$ is uniformly bounded below. For this, it is enough that the smallest eigenvalue of $R$ is uniformly bounded below. Since $w_i$ are bounded away from zero, this property follows from the corresponding one for the sample covariance matrix of $z_i$, which is just the well-known Bai-Yin law \citep{bai2009spectral}. 

Continuing with our argument, by the standard rank-one-perturbation argument \citep{bai2009spectral}, we have $\limn\frac{1}{n}\tr[R_{(i)}^{-1}]-\frac{1}{n}\tr[R^{-1}]=0$, since $R_{(i)}$ is a rank-one perturbation of $R$. 
Recall that $Z$ has iid entries satisfying $\EE{Z_{ij}}=0$,$\EE{Z_{ij}^2}=1$. Moreover, it is easy to see that by the $4+\eta$-th moment assumption we have for each $\delta>0$ that
\begin{align*}
\frac{1}{\delta^2np}\sum_{i,j}\EE{Z_{ij}^2I_{[|Z_{ij}|>\delta\sqrt{n}}]}\rightarrow0,\,\text{ as }n\rightarrow\infty.
\end{align*}
Also, the e.s.d. of $W^2$ converges weakly to the distribution of $w^2$. By the results of \cite{lixin2007spectral} or \cite{paul2009no}, with probability 1, the e.s.d. of 
$B_n=n^{-1}Z^\top W^2Z$
converges in distribution to a probability distribution $F_B$ whose Stieltjes transform satisfies
\begin{align*}
m_B(z)=\frac{1}{\int\frac{s}{1+\gamma e_Bs}dF_{w^2}(s)-z},
\end{align*}
where for $z\in \mathbb{C}^+$, $e_B=e_B(z)$ is the unique solution in $\mathbb{C}^+$ to the equation
\begin{align*}
e_B=\frac{1}{\int\frac{s}{1+\gamma e_Bs}dF_{w^2}(s)-z}.
\end{align*}
Also, by the same reasoning as in the proof of the Haar matrix case, the l.s.d. is supported on an interval bounded away from zero. This means that we can find the almost sure limits of the traces in terms of the Stieltjes transform of the l.s.d. at zero, or equivalently in terms of the inverse eta-transform:
\begin{align*}
\frac{1}{p}\tr(\frac{1}{n}R^{-1})&=\frac{n}{p}\tr[(Z_w^\top Z_w)^{-1}]\rightarrow\frac{\eta_{w^2}^{-1}(1-\gamma)}{\gamma},
\end{align*}
and therefore
$\tr(R^{-1})\xrightarrow{a.s.}\eta_{w^2}^{-1}(1-\gamma).$
Thus, from the expression of $h_{ii}$ given at the beginning, we also have
\begin{align*}
|h_{ii}-1+\frac{1}{1+w_i^2\eta_{w^2}^{-1}(1-\gamma)}|\xrightarrow{a.s.}0.
\end{align*}
Thus as $n$ goes to infinity, leverage-based sampling is equivalent to sampling $x_i$ with probability
\begin{align}
\pi_i=\min\left(\frac{r}{p}(1-\frac{1}{1+w_i^2\eta_{w^2}^{-1}(1-\gamma)}),1\right),
\end{align}
in the sense that $|\min(\frac{r}{p}h_{ii},1)-\pi_i|\xrightarrow{a.s.}0$. Therefore, it is not hard to see that the performance metrics we study have the same limits for leverage sampling and for sampling with respect to $\pi_i$. We argue for this in more detail below. 
Let $S^*$ be the sampling matrix based on the leverage scores, with diagonal entries $s_i^*\sim Bernoulli(\min(r/nh_{ii},1))$. This is the original sampling mechanism to which the theorem refers. Now, we have shown that $\|S-S^*\|_{op}\to0$ almost surely. Because of this, one can check that $\tr[Q_1(S,X)]-\tr[Q_1(S^*,X)]\to0$ almost surely. This follows by a simple matrix calculation expressing $A^{-1}-B^{-1}=-A^{-1}(B-A)B^{-1}$, and bounding the trace using Lemma \ref{lemma:bound trace(AB)}.

\subsection{Greedy leverage sampling}
\label{sec: greedy leverage supp}
As a direct corollary of Theorem \ref{theorem:sampling elliptical model}, we have the results for greedy leverage sampling. 
\begin{corollary}[Greedy leverage sampling] Under the conditions of Theorem \ref{theorem:sampling elliptical model}, suppose that for $p<r<n$, we take the $r$ rows of $X$ with the highest leverage scores and do linear regression on the resulting subsample of $X,Y$. Let $\tilde{w}^2=w^21_{[w^2>F_{w^2}^{-1}(1-\xi)]}$ denote the distribution of $F_{w^2}$ truncated at $1-\xi$.
Then
\begin{align*}
&\limn VE(\hbeta_s,\hbeta)=\frac{\eta_{\tilde{w}^2}^{-1}(1-\gamma)}{\eta_{w^2}^{-1}(1-\gamma)},
\\
&\limn VE(\hbeta_s,\hbeta)=1+\frac{1}{\gamma}\EE{w^21_{[w^2<F_{w^2}^{-1}(1-\xi)]}}\eta_{\tilde{w}^2}^{-1}(1-\gamma/\xi),\\
&\limn OE(\hbeta_s,\hbeta)=\frac{1+\EE{w^2}\eta_{\tilde{w}^2}^{-1}(1-\gamma)}{1+\EE{w^2}\eta_{w^2}^{-1}(1-\gamma)},
\end{align*}
where $\eta_{w^2}$ and $\eta_{\tilde{w}^2}$ are the $\eta$-transforms of $F_{w^2}$ and $F_{\tilde{w}^2}$, respectively, and the expectations are taken with respect to those limiting distributions.
\label{theorem:largest leverage scores}
\end{corollary}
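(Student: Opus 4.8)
The plan is to reduce greedy leverage sampling to the threshold sampling already handled by Theorem~\ref{theorem:sampling elliptical model}. The starting point is the leverage-score expansion derived in Section~\ref{theorem:leverage sampling:pf}: under the elliptical model $X=WZ\Sigma^{1/2}$ one has
\begin{align*}
h_{ii}=1-\frac{1}{1+w_i^2\,\eta_{w^2}^{-1}(1-\gamma)}+\varepsilon_i,\qquad \max_i|\varepsilon_i|\xrightarrow{a.s.}0,
\end{align*}
where the uniformity over $i$ comes from the polynomial concentration rate of Lemma~\ref{quad_form} together with a union bound over the $n$ rows. Since $\eta_{w^2}^{-1}(1-\gamma)>0$, the map $x\mapsto 1-1/(1+x\,\eta_{w^2}^{-1}(1-\gamma))$ is strictly increasing on $[0,\infty)$, so ranking the rows by leverage score agrees, up to the vanishing perturbation $\varepsilon_i$, with ranking them by $w_i^2$. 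Because $r/n\to\xi$ and the empirical distribution of $\{w_i^2\}_{i\le n}$ converges weakly to $F_{w^2}$, which I assume is continuous at its $(1-\xi)$-quantile $\tau:=F_{w^2}^{-1}(1-\xi)$, the empirical cutoff separating the $r$ kept rows converges a.s.\ to $\tau$, and the kept set coincides with $\{i:w_i^2>\tau\}$ except on $o(n)$ indices (those with $w_i^2$ within $\max_i|\varepsilon_i|$ of $\tau$).

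Next I would introduce the deterministic threshold sampling $\pi_i=1_{[w_i^2>\tau]}$, with diagonal $0/1$ sampling matrix $S$, and let $S^{\mathrm{lev}}$ be the greedy-leverage sampling matrix. The key point is that $\pi_i$ depends only on $w_i$, hence is independent of $z_i$, and $WSW=\mathrm{diag}(w_i^2 1_{[w_i^2>\tau]})$ is diagonal, independent of $Z$, with e.s.d.\ converging to the law $F_{\tilde w^2}$ of $w^2 1_{[w^2>\tau]}$; thus $S$ meets the hypotheses of Theorem~\ref{theorem:sampling elliptical model} (the degenerate Bernoulli case). To pass from $S^{\mathrm{lev}}$ to $S$ I would argue exactly as at the end of Section~\ref{theorem:leverage sampling:pf}: writing $A^{-1}-B^{-1}=-A^{-1}(B-A)B^{-1}$ with $A=X^\top S^{\mathrm{lev}}X$, $B=X^\top SX$, the difference $B-A=X^\top(S-S^{\mathrm{lev}})X$ has rank $o(n)$ and uniformly bounded norm, while $A^{-1},B^{-1}$ have norms bounded a.s.\ — their smallest eigenvalues are bounded below because $w$ is bounded away from zero and, since $\xi>\gamma$, the submatrix of $Z$ on the kept rows is tall, so the Bai--Yin law applies. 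Lemma~\ref{lemma:bound trace(AB)} then gives $\tr[Q_1(S^{\mathrm{lev}},X)]-\tr[Q_1(S,X)]=o(p)$ a.s., and likewise for $\tr[Q_2]$ and for the quadratic forms $x_t^\top Q_1 x_t$, so greedy leverage sampling and the threshold sampling $S$ have the same limiting $VE$, $PE$, $OE$.

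Finally I would invoke Theorem~\ref{theorem:sampling elliptical model} with the law of $sw^2$ equal to $F_{\tilde w^2}$. This immediately yields $\lim VE=\eta_{\tilde w^2}^{-1}(1-\gamma)/\eta_{w^2}^{-1}(1-\gamma)$ and $\lim OE=(1+\EE{w^2}\eta_{\tilde w^2}^{-1}(1-\gamma))/(1+\EE{w^2}\eta_{w^2}^{-1}(1-\gamma))$, and for prediction efficiency $\lim PE=1+\gamma^{-1}\EE{w^2(1-s)}\,\eta_{\tilde w^2}^{-1}(1-\gamma)$ with $\EE{w^2(1-s)}=\EE{w^2 1_{[w^2<\tau]}}$; rewriting the inverse $\eta$-transform of the point-mass-inflated law $\tilde w^2$ in terms of the proper conditional truncation — the two are related by $\eta_{\tilde w^2}(x)=(1-\xi)+\xi\,\eta_{w^2\mid w^2>\tau}(x)$, which converts the normalization $1-\gamma$ into $1-\gamma/\xi$ — puts the formula into the stated form. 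The main obstacle is the reduction in the first two paragraphs: one must show that the discrete ``take the top $r$'' operation is stable under the $o(1)$ perturbation of the leverage scores and degenerates to a clean threshold, which forces the non-atom/continuity hypothesis on $F_{w^2}$ at its $(1-\xi)$-quantile, and then verify via the rank-$o(n)$ perturbation bound that the boundary rows do not affect any limiting trace functional. Everything after that is a direct substitution into Theorem~\ref{theorem:sampling elliptical model}.
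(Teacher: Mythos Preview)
Your proposal is correct and follows exactly the route the paper intends: the paper presents this result only as ``a direct corollary of Theorem~\ref{theorem:sampling elliptical model}'' without spelling out the reduction, and you have supplied precisely the missing details --- the asymptotic monotone correspondence between $h_{ii}$ and $w_i^2$ from Section~\ref{theorem:leverage sampling:pf}, the degeneration of the top-$r$ rule to the threshold sampling $\pi_i=1_{[w_i^2>\tau]}$, the rank-$o(n)$ perturbation argument to switch sampling matrices, and the $\eta$-transform bookkeeping that reconciles the $1-\gamma$ and $1-\gamma/\xi$ normalizations. Your observation that a continuity assumption on $F_{w^2}$ at its $(1-\xi)$-quantile is needed for the threshold reduction to be clean is a genuine point the paper leaves implicit.
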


\subsection{Table of tradeoff between computation and statistical accuracy}
\label{sec: table tradeoof supp}
We give a summary of the algorithmic complexity and statistical accuracy (variance efficiency) of each method in Table \ref{comp_stat}.

\begin{table}[]
\renewcommand{\arraystretch}{1}
\centering
\caption{Tradeoff between computation and statistical accuracy.}
\label{comp_stat}
\begin{tabular}{
>{\centering\arraybackslash}m{2.75cm}
>{\centering\arraybackslash}m{2.4cm}
>{\centering\arraybackslash}m{2.2cm}
>{\centering\arraybackslash}m{2.7cm}
>{\centering\arraybackslash}m{2.6cm}}
\toprule
Data matrix $X$ 
& Sketching matrix \centering$S$ 
& VE 
& Computational complexity
& Parallelization of sketching across $n$
\\\hline

 Incoherent, near-iid
& Uniform sampling 
& {\multirow{2}{*}{{\(\displaystyle\frac{n-p}{r-p}\)}}}
& $O(rp^2)$
& Embarassingly parallel
\\\cmidrule{1-2}\cmidrule{4-5}

Arbitrary
& Hadamard 
& 
&  $O(np\log n+rp^2)$ 
& Nontrivial
\\\midrule

Arbitrary
&iid entries 
& {$1+\frac{n-p}{r-p}$}  
& $O(rnp+rp^2)$
& Embarassingly parallel
\\
\bottomrule
\end{tabular}
\end{table}

\subsection{Simulation for leverage-based sampling}
\label{section:leverage:ex}
We consider a simple example where $w$ follows a discrete distribution, with 
$\PP{w_i=\pm d_1}$ = $\PP{w_i=\pm d_2}$ = $1/4$. $Z$ is a standard Gaussian random matrix and $\Sigma$ is the identity matrix. We plot simulation results as well as our theory for leverage score sampling, greedy leverage scores, uniform sampling and Hadamard projection. In the right panel, we also plot the histogram of the leverage scores of $X$. Our theory agrees very well with the simulations. 

We also observe that the greedy leverage sampling outperforms random leverage sampling, especially for relatively small $r$. Moreover, leverage sampling and greedy leverage scores have much better performances than uniform sampling. This is because the leverage scores are highly nonuniform in this example. 

In Figure \ref{Fig: compare leverage hadamard}, we also compare the theoretical performance of leverage score sampling and Hadamard projection in the same elliptical model, with several aspect ratios $\gamma$ and $d_1,d_2$. We skip the comparison with Gaussian/iid projection because the performance of Hadamard projection is uniformly better, as has been shown before. The difference between $d_1$ and $d_2$ is a measure of the non-uniformity of the data. 

When the data is relatively uniform (left panel), leverage sampling and Hadamard projection have similar VE. When in addition $r$ is small, leverage score sampling tends to perform better than Hadamard projection. However, when the dataset is nonuniform (right panel), leverage sampling and Hadamard projection can have very different performance. When $\gamma$ is small, leverage sampling works much better; but when $\gamma$ is large, Hadamard is uniformly better. Thus, when the dataset is nonuniform and the targeted dimension is rather small, leverage score sampling is the recommended method, provided that one can estimate the leverage scores efficiently. In conclusion, this example shows that the relative performance of sketching methods on elliptical data is quite complex, and perhaps one should mostly expect rules of thumb, instead of definitive answers.


\begin{figure}[]
\begin{subfigure}{.7\textwidth}
\includegraphics[width=\textwidth, center]{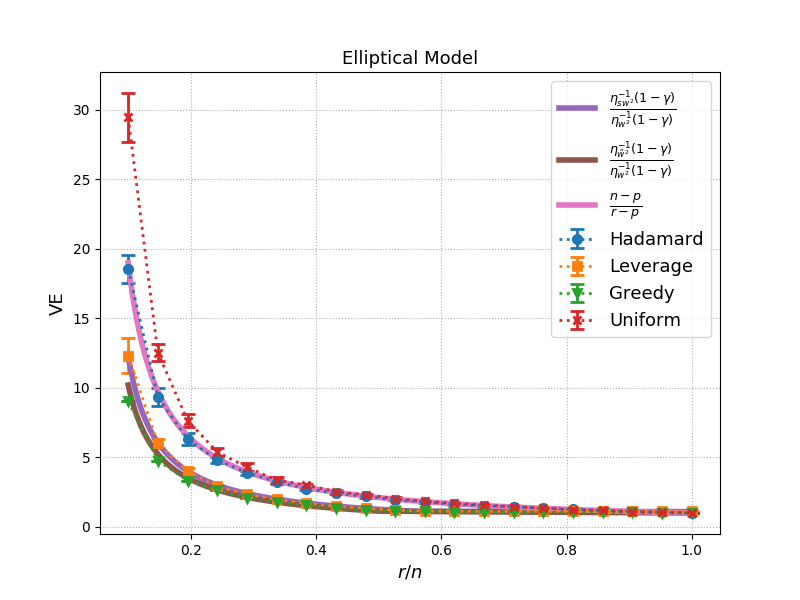}
\end{subfigure}
\begin{subfigure}{.28\textwidth}
\includegraphics[width=\textwidth, center]{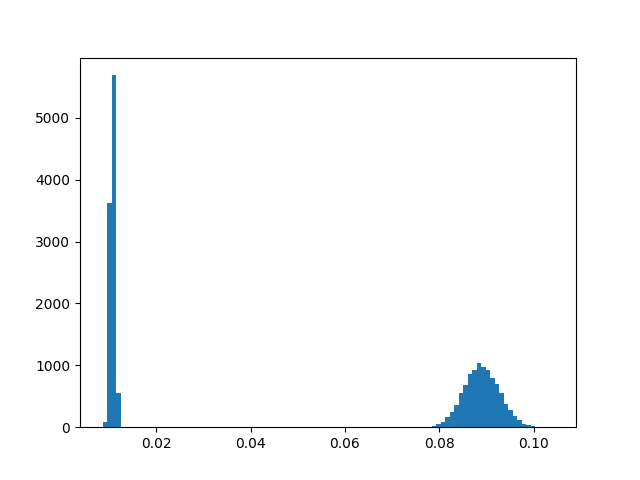}
\end{subfigure}
\caption{Leverage sampling, greedy leverage sampling and uniform sampling for elliptical model. We generate the data matrix $X$ from the elliptical model defined in \eqref{def:elliptical model}, and we take $d_1=1,d_2=3, n=20000, p=1000$ while $Z$ is generated with iid $\N(0,1)$ entries and $\Sigma$ is the identity. We let $r$ range from 4000 to 20000. At each dimension $r$ we repeat the experiments 50 times and take the average. For leverage sampling, we sample each row of $X$ independently with probability $\min(r/p\cdot h_{ii},1)$. For greedy leverage scores, we take the $r$ rows of $X$ with the largest leverage scores. For uniform sampling, we uniformly sample $r$ rows of $X$. We see a good match between theory and simulations.}
\label{fig:leverage_elliptical}
\end{figure}

\begin{figure}[]
\includegraphics[width=.7\textwidth, center]{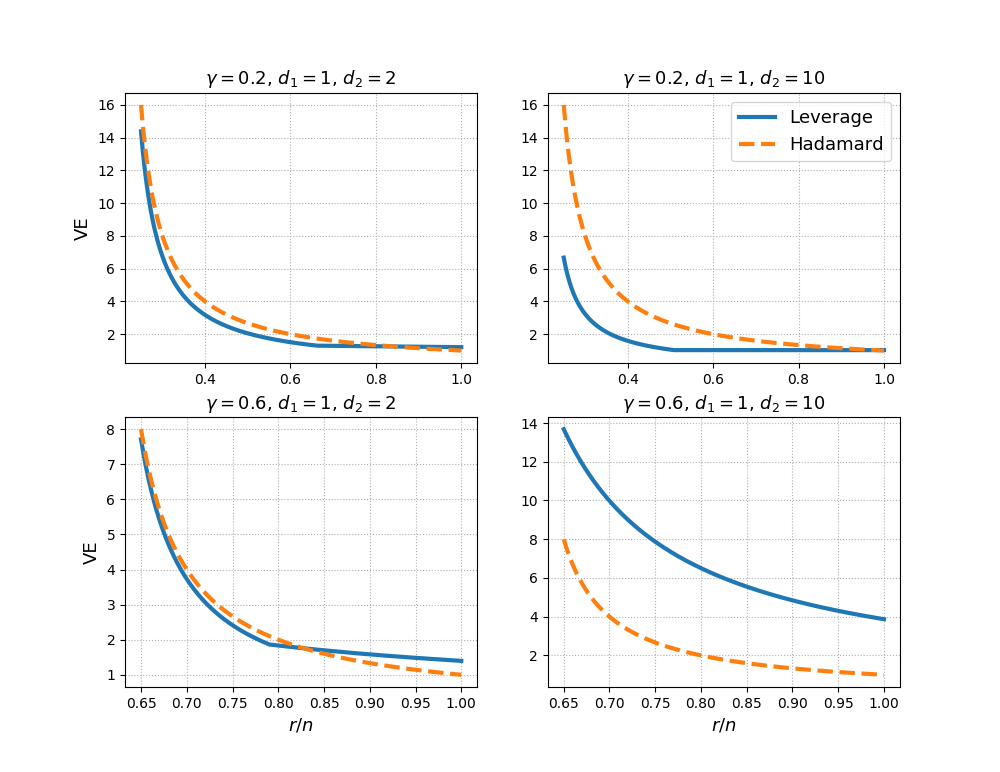}
\caption{Comparing leverage sampling and Hadamard projection.}
\label{Fig: compare leverage hadamard}
\end{figure}

Following is the details of the calculation.
\begin{align*}
&\eta_{w^2}(z)=\frac{1}{2}\frac{1}{1+zd_1^2}+\frac{1}{2}\frac{1}{1+zd_2^2},\\
&\eta_{sw^2}(z)=(1-\frac{1}{2}\min(\pi_1, 1)-\frac{1}{2}\min(\pi_2, 1))+\frac{1}{2}\min(\pi_1, 1)\frac{1}{1+d_1^2z}+\frac{1}{2}\min(\pi_2, 1)\frac{1}{1+d_2^2z},
\end{align*}
where 
\begin{align*}
\pi_1&=\frac{\xi}{\gamma}(1-\frac{1}{1+d_1^2\eta_{w^2}^{-1}(1-\gamma)}),\,\,
\pi_2=\frac{\xi}{\gamma}(1-\frac{1}{1+d_2^2\eta_{w^2}^{-1}(1-\gamma)}).
\end{align*}
It is easy to see that
\begin{align*}
\pi_1 + \pi_2 = 2\xi,
\end{align*}
and
\begin{align*}
\eta_{F_{w^2}}^{-1}(1-\gamma)&=\frac{1}{2d_1^2d_2^2}(-d_1^2-d_2^2+\frac{d_1^2+d_2^2}{2(1-\gamma)}+\sqrt{(d_1^2+d_2^2-\frac{d_1^2+d_2^2}{2(1-\gamma)})+\frac{4d_1^2d_2^2\gamma}{1-\gamma}}),
\end{align*}

If we use the $r$ rows of $X$ with the largest leverage scores, the truncated distribution $\tilde{w}$ in Theorem \ref{theorem:largest leverage scores} can be written as
\begin{align*}
F_{\tilde{w}^2}(t)=\begin{cases}\delta_{d_2^2}, & 0<\frac{r}{n}\leq\frac{1}{2}\\
(1-\frac{n}{2r})\delta_{d_1^2}+\frac{n}{2r}\delta_{d_2^2}, & \frac{1}{2}<\frac{r}{n}\leq1.
\end{cases}
\end{align*}
Therefore
\begin{align*}
\eta_{\tilde{w}^2}(z)=\begin{cases}\frac{1}{1+d_2^2z}, & 0<\frac{r}{n}\leq\frac{1}{2}\\
(1-\frac{n}{2r})\frac{1}{1+d_1^2z}+\frac{n}{2r}\frac{1}{1+d_2^2z}, &\frac{1}{2}<\frac{r}{n}\leq1,
\end{cases}
\end{align*}
thus
\begin{align*}
\eta_{F_{\tilde{w}^2}}^{-1}(1-\frac{\gamma}{\xi})=\begin{cases}
\frac{\gamma}{d_2^2(\xi-\gamma)}, & 0<\frac{r}{n}\leq\frac{1}{2}\\
\frac{1}{2d_1^2d_2^2}[-b+\sqrt{b^2+\frac{4d_1^2d_2^2\gamma}{\xi-\gamma}}], & \frac{1}{2}<\frac{r}{n}\leq1.
\end{cases}
\end{align*}
Here
\begin{align*}
b=d_1^2+d_2^2-\frac{(2\xi-1)d_2^2+d_1^2}{2(\xi-\gamma)}.
\end{align*}





\subsection{Simulation for nonuniform data}
\label{sec:nonuniform}
In Figure \ref{fig: t distribution}, each row of $X$ is generated from a $t$ distribution with 1 degree of freedom. Specifically, let $\Sigma$ be $p\times p$ covariance matrix with $\Sigma_{ij}=2\times2^{-|i-j|}$. Then each row of $X$ is generated as $\N(0,\Sigma)$ divided by a chi-squared random variable with 1 degree of freedom. We show the mean, as well as the 5\% and 95\% quantiles of VE over 1000 repetitions. We do not use standard deviation to illustrate the variability, because the variance can be rather large. 

We also plot the histogram of the leverage scores on the right. There are several extremely large leverage scores, which means that the design matrix is ill-conditioned. For readability's sake, we do not plot the results for uniform sampling and leverage sampling. Instead, we show them in Tables \ref{tab:uniform log VE} and \ref{tab:leverage log VE}. We observe the following:

\begin{itemize}
	\item Usually, the numerical mean of VE falls on the respective theoretical line. Moreover, the 95\% confidence intervals always cover the theoretical lines. This means that our results are correct on average.
	\item However, the VE can be anomalously large in some rare cases, driving the mean to be rather large. But even among the 1000 repetitions, the anomalous values only fewer than ten times. This explains why the standard deviations are large but the 90\% confidence intervals are relatively short.
	\item The reason for the abnormal phenomena is due to some rows of $X$ with large norms, which dominate the influence of $X$ on $Y$. When sketching the matrix, we shrink the influence of these dominating rows, either by mixing with other unimportant rows or by dropping them altogether. Therefore the sketched estimators lose too much accuracy.
	\item Even in this less favorable situation, the Hadamard transform is still the most desirable sketching method. It has small average VE, relatively small variability (i.e., short confidence intervals), and short running time.
\end{itemize}

\begin{figure}[]
\centering
\begin{subfigure}{.7\textwidth}
\includegraphics[width=5in]{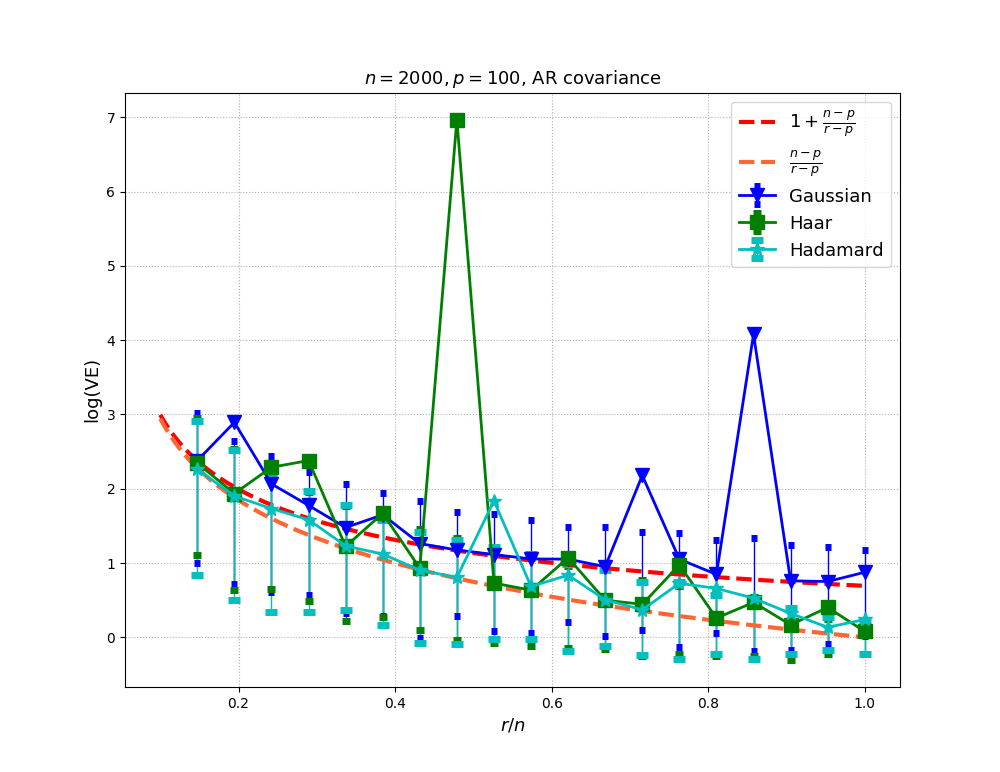}
\end{subfigure}\hspace{0.6cm}
\begin{subfigure}{.2\textwidth}
\includegraphics[width=2cm]{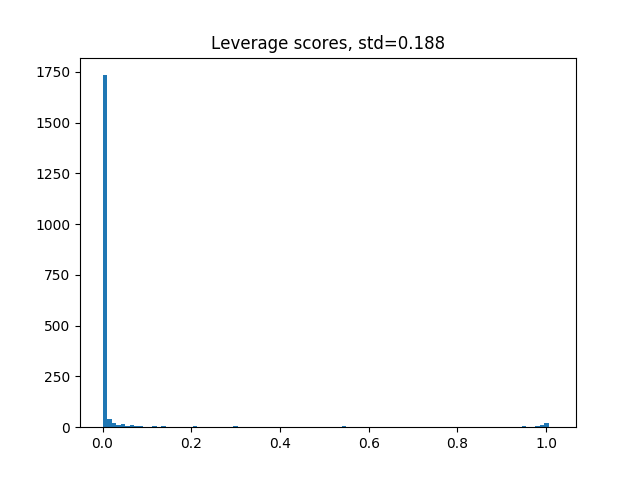}
\end{subfigure}
\caption{$t$ distribution.}\label{fig: t distribution}
\end{figure}

\begin{table}[]
\caption{Uniform sampling, $\log VE$}
\label{tab:uniform log VE}
\centering
\begin{tabular}{ccccc}
\toprule
   & mean        & 5\%          & 95\%        & 50\%        \\
\midrule
0  & 20.25823968 & 3.09919506   & 10.90630978 & 9.689192869 \\
1  & 17.26805584 & 2.044572843  & 9.33262643  & 7.881064556 \\
2  & 10.78396456 & 1.768101235  & 7.978181897 & 6.650869548 \\
3  & 11.63550561 & 3.043487316  & 7.369292418 & 5.715237875 \\
4  & 9.203440888 & 0.794976879  & 6.595984621 & 5.029794939 \\
5  & 8.980845283 & 1.326756793  & 5.774310548 & 4.38426635  \\
6  & 7.380001677 & 1.381715379  & 5.809281226 & 3.871467657 \\
7  & 5.175269082 & -0.182261694 & 5.605915977 & 3.399170722 \\
8  & 6.359148538 & 0.113763057  & 4.175648541 & 2.948032272 \\
9  & 9.15176239  & 1.078751974  & 4.24045247  & 2.574611614 \\
10 & 3.947147126 & 0.814135635  & 3.999019444 & 2.265773149 \\
11 & 3.44225402  & 0.122953471  & 3.484524911 & 1.932062314 \\
12 & 2.527325826 & 0.800658751  & 2.987471342 & 1.6527189   \\
13 & 1.824634546 & 0.322903628  & 2.587082545 & 1.331773852 \\
14 & 1.491822848 & 0.325396696  & 2.165972084 & 1.07823758  \\
15 & 1.04419024  & 0.130949401  & 1.589996964 & 0.81014184  \\
16 & 0.934085598 & 0.088131217  & 1.387329021 & 0.596745268 \\
17 & 0.873952782 & -0.052823305 & 1.034961439 & 0.375621926 \\
18 & 0.447688303 & -0.130411888 & 0.61016263  & 0.17981115  \\
19 & 6.978320808 & -0.189431268 & 0.226659269 & -4.75E-08 \\
\bottomrule 
\end{tabular}
\end{table}

\begin{table}[]
\caption{Leverage sampling, $\log VE$}
\label{tab:leverage log VE}
\centering
\begin{tabular}{ccccc}
\toprule
   & mean        & 5\%          & 95\%        & 50\%         \\
\midrule
0  & 7.626778204 & -0.159794491 & 0.821111174 & 0.204571271  \\
1  & 3.216688922 & -0.223342993 & 0.456512164 & 0.059604721  \\
2  & 1.015004939 & -0.207986859 & 0.424644872 & 0.029099432  \\
3  & 6.287155109 & -0.161735746 & 0.333680227 & 0.012818312  \\
4  & 0.882013996 & -0.238995897 & 0.397351467 & 0.010611862  \\
5  & 0.133943751 & -0.195882891 & 0.322395411 & 0.002172268  \\
6  & 0.487048617 & -0.160963212 & 0.256059749 & 0.002658917  \\
7  & 0.204928838 & -0.205892837 & 0.302131442 & 0.001159857  \\
8  & 1.354935903 & -0.212644915 & 0.376751395 & 0.001978275  \\
9  & 0.691138831 & -0.174171007 & 0.290014414 & 0.001378926  \\
10 & 3.129679099 & -0.250449933 & 0.432175622 & 6.24E-05     \\
11 & 0.40467726  & -0.280542607 & 0.403070117 & 0.000882543  \\
12 & 3.800307066 & -0.206858102 & 0.452128865 & -0.000171639 \\
13 & 0.403587432 & -0.18407553  & 0.251072808 & -7.39E-05    \\
14 & 0.758228813 & -0.296238449 & 0.452796686 & 0.000292609  \\
15 & 0.180781152 & -0.208918435 & 0.395642624 & 5.19E-05     \\
16 & 0.075991698 & -0.115281186 & 0.202626369 & 8.07E-05     \\
17 & 0.159661829 & -0.191824839 & 0.243641727 & 6.21E-05     \\
18 & 1.00887942  & -0.218371065 & 0.369474928 & 3.30E-05     \\
19 & 1.994998633 & -0.145457064 & 0.347448221 & 7.52E-08    \\
\bottomrule
\end{tabular}
\end{table}

\subsection{OE for two empirical datasets}
\label{sec:oe empirical supp}
See Figure \ref{fig :oe MSD flight} for the out-of-sample error on the two empirical datasets: Million Song Dataset (MSD) and the Flight Dataset.
\begin{figure}
\begin{subfigure}{.49\textwidth}
\includegraphics[width=\textwidth, center]{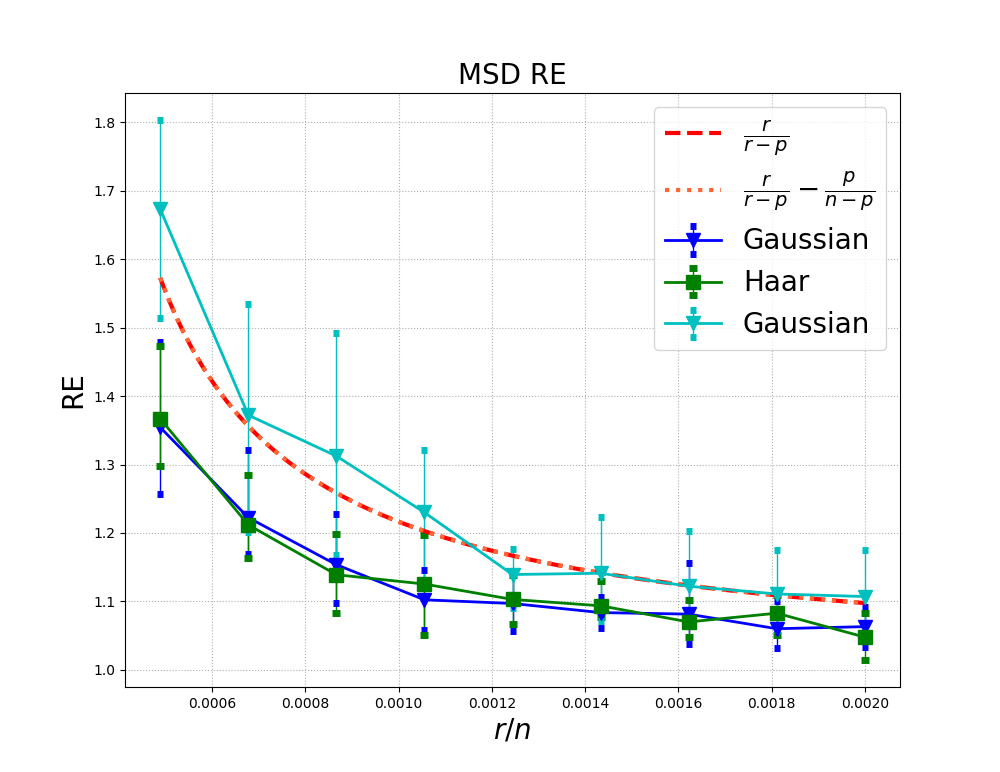}
\end{subfigure}
\hspace{0.05cm}
\begin{subfigure}{.49\textwidth}
\includegraphics[width=\textwidth, center]{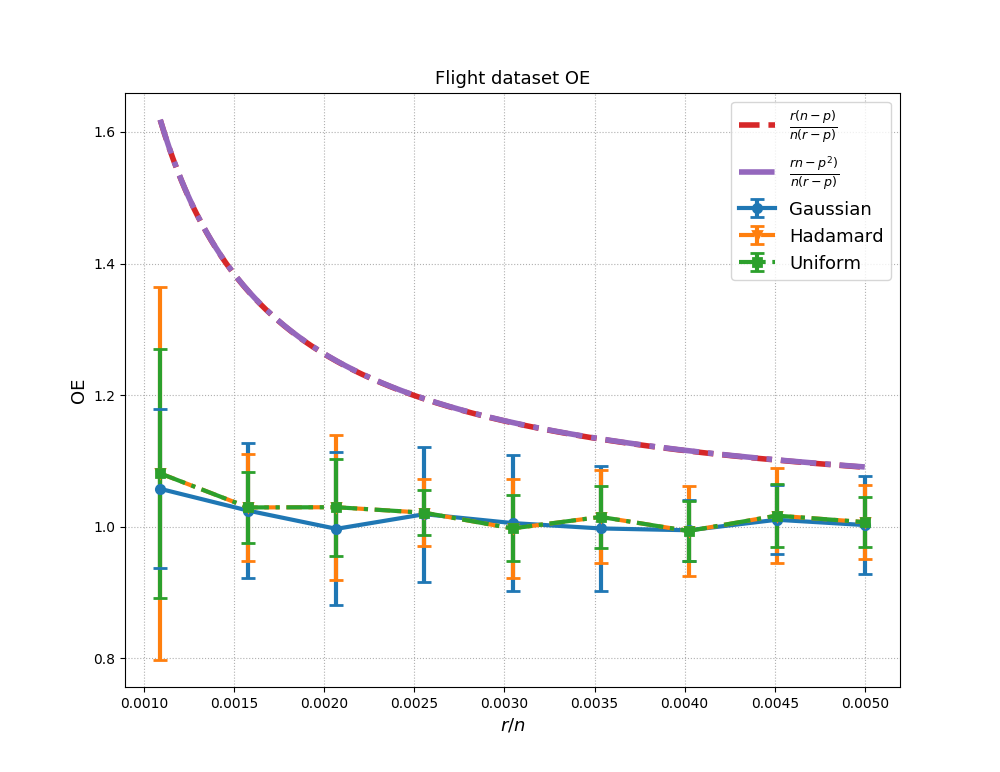}
\end{subfigure}
\caption{OE for MSD and flight dataset.}
\label{fig :oe MSD flight}
\end{figure}

\subsection{Comparison with previous bounds}
\label{section:compare mahoney}
We also compare our results with the upper bounds given in \cite{raskutti2014statistical}. For sub-Gaussian projections, they showed that if $r\geq c\log n$, then with probability greater than 0.7, it holds that
\begin{align*}
PE&\leq44(1+\frac{n}{r}),\,\,
RE\leq1+44\frac{p}{r}.
\end{align*}
For Hadamard projection, they showed that if $r\geq cp\log n(\log p+\log\log n)$, then with probability greater than 0.8, it holds that
\begin{align*}
PE&\leq1+40\log(np)(1+\frac{p}{r}),\,\,
RE\leq40\log(np)(1+\frac{n}{r}).
\end{align*}
In Figure \ref{mahoney}, we plot both our theoretical lines and the above upper bounds, as well as the simulation results. It is shown that our theory is much more accurate than these upper bounds.

\begin{figure}
\includegraphics[scale=0.6,center]{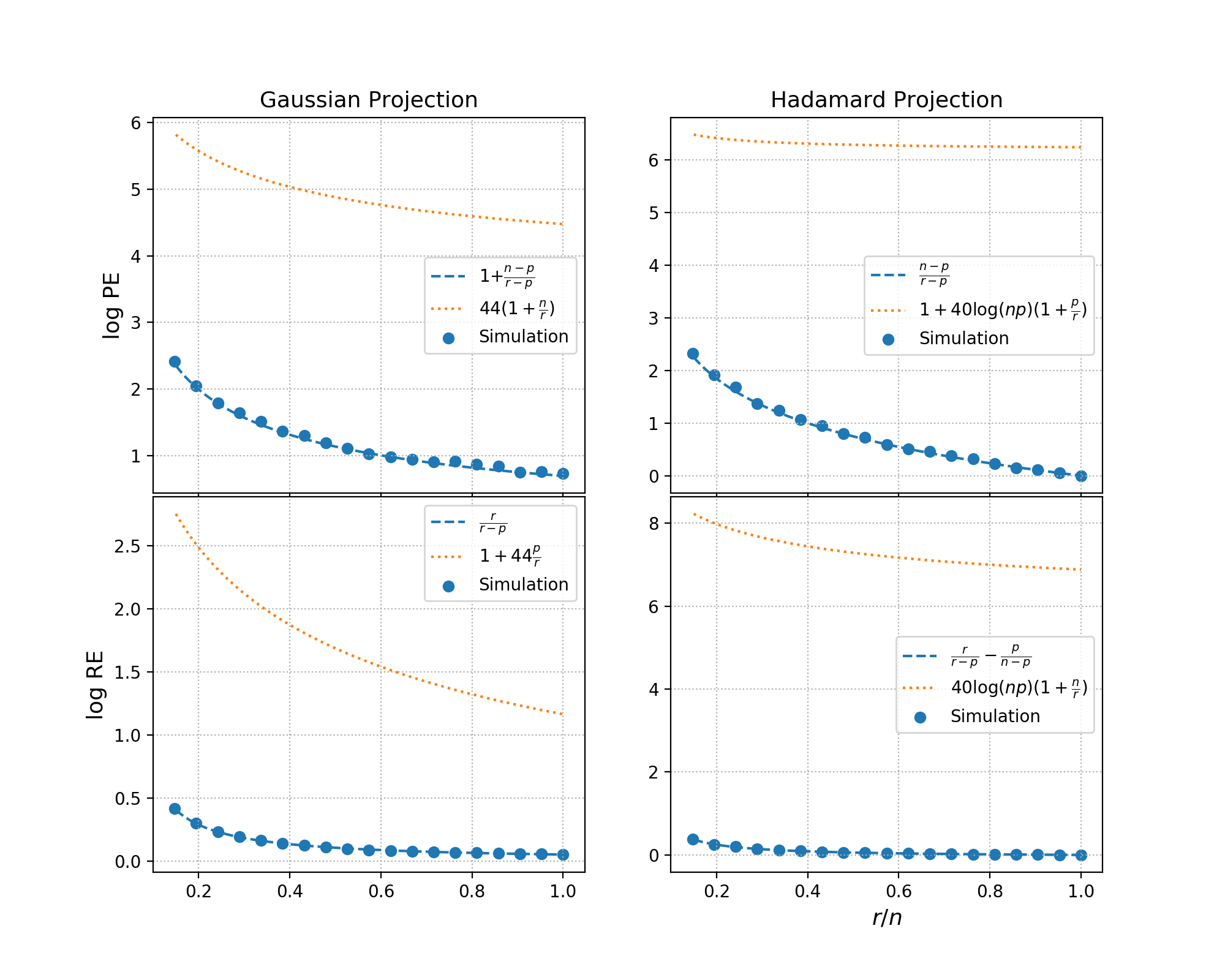}
\caption{Comparison with prior bounds. In this simulation, we let $n=2000$, the aspect ratio $\gamma=0.05$, with $r/n$ ranging from 0.15 to 1. The first column displays the results for $PE$ and $RE$ for Gaussian projection, while the second column shows results for randomized Hadamard projection. The $y$-axis is on the log scale. The data matrix $X$ is generated from Gaussian distribution and fixed at the beginning, while the coefficient $\beta$ is generated from uniform distribution and also fixed. At each dimension $r$ we repeat the simulation 50 times and all the relative efficiencies are averaged over 50 simulations. In each simulation, we generate the noise $\ep$ as well as the sketching matrix $S$. The orange dotted lines are drawn according to Section \ref{section:compare mahoney}, while the blue dashed lines are drawn according to our Theorem \ref{gsdx} and Theorem \ref{theorem:Hadamard S}.}
\label{mahoney}
\end{figure}

\subsection{Computation time}
\label{sec: computation time supp}
In this section we perform a more rigorous empirical comparison of the running time of sketching. We know that the running time of OLS has order of magnitude $O(np^2)$, while the running time of Hadamard projections is $c_1 np^2 + c_2 np\log(n)$ for some constants $c_i$. While the cubic term clearly dominates for large $n,p$, our goal is to understand the performance for finite samples $n,p$ on typical commodity hardware. For this reason, we perform careful timing experiments to determine the approximate values of the constants on a MacBook Pro (2.5 GHz CPU, Intel Core i7).

We obtain the following results. The time for full OLS and Hadamard sketching is approximately
\begin{align*}
t_{full}=4\times10^{-11}np^2,\,\,
t_{Hadamard}=2\times10^{-8}pn\log n+4\times10^{-11}rp^2
\end{align*}

See Figure \ref{cot} for a comparison of the running times for various combinations of $n,p$. For instance, we show the results for $n=7\cdot 10^4$, and $p=1.4\cdot 10^4$ with the sampling ratio ranging from $0.2$ to 1. We see that we save time if we take $r/n \le 0.6$.

We can also perform a more quantitative analysis. If we want to reduce the time by a factor of $0<c<1$, then we need
\begin{align*}
\frac{2\times10^{-8}pn\log n+4\times10^{-11}rp^2}{4\times10^{-11}np^2}\leq c
\end{align*}
or also $r\leq cn-500\frac{n\log n}{p}$, when $0<c-500\frac{\log n}{p}<1$. Then the out-of-sample prediction efficiency is lower bounded by
\begin{align*}
&OE(\hbeta_s,\hbeta)=\frac{r(n-p)}{n(r-p)}\\
\geq&\frac{n-p}{n}\left(1+\frac{p}{n(c-\frac{500\log n}{p})-p}\right)
=(1-\gamma)\left(1+\frac{\gamma}{c-\frac{500\log n}{p}-\gamma}\right).
\end{align*}
This shows how much we lose if we decrease the time by a factor of $c$.

Similarly, if we want to control the $VE$, say to ensure that $VE(\hbeta_s,\hbeta)\leq1+\delta$, then we need
\begin{align*}
r\geq\frac{n-p}{1+\delta}+p,
\end{align*}
then the we must spend at least a fraction of the full OLS time given below
\begin{align*}
\frac{r}{n}+\frac{500\log n}{p}\geq\frac{1-\gamma}{1+\delta}+\gamma+\frac{500\log n}{p}.
\end{align*}

\begin{figure}
\includegraphics[scale=0.7, center]{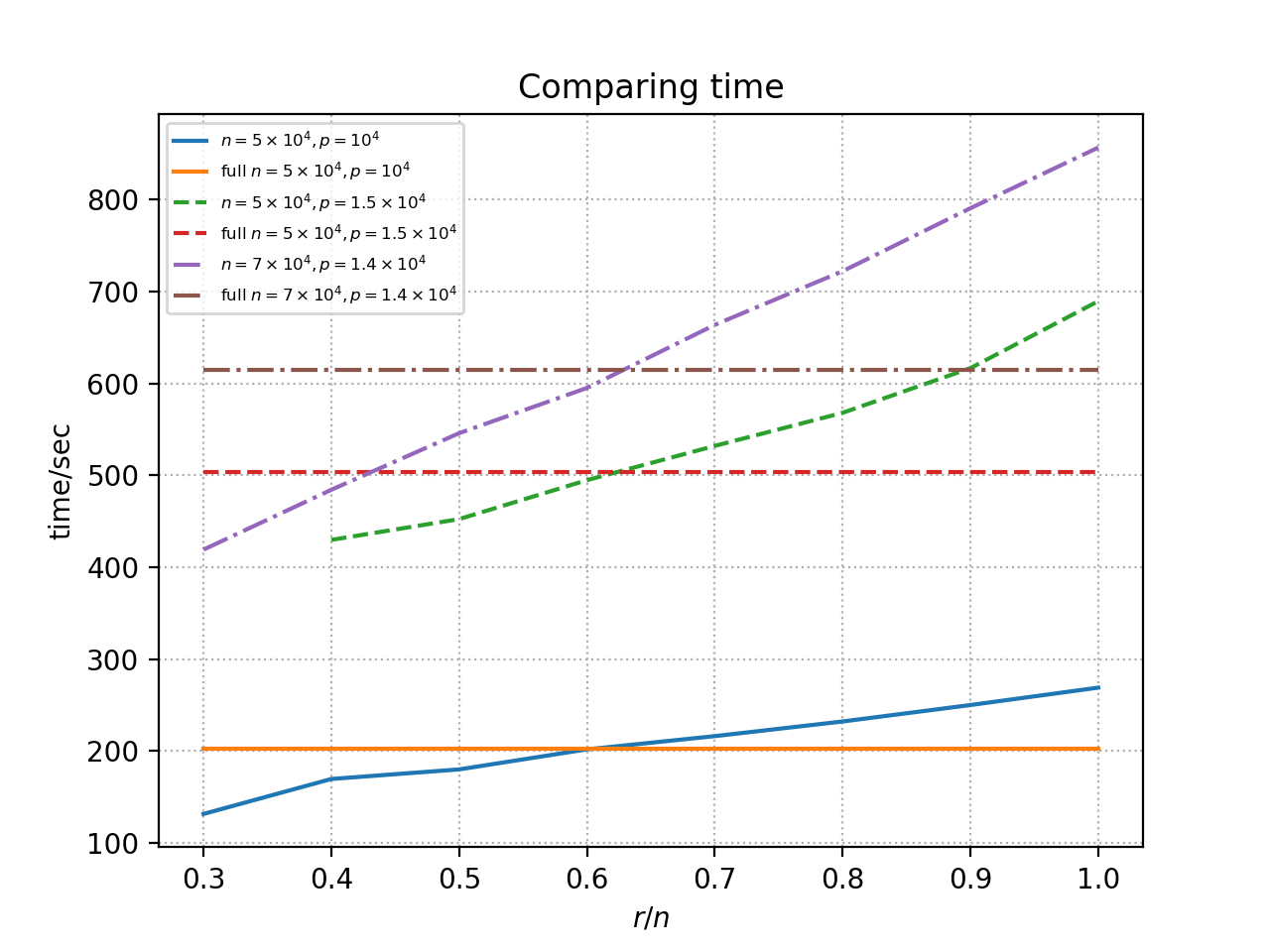}
\caption{A comparison of the running times for various combinations of $n,p$.}
\label{cot}
\end{figure}








\end{document}